\documentclass[12pt]{article}

\usepackage[margin=1in]{geometry}
\usepackage{amsthm}
\usepackage{amssymb}
\usepackage{amsmath}
\usepackage{graphicx}
\usepackage{url}
\usepackage{color}
\usepackage{framed}
\usepackage{hyperref}

\newtheorem{theorem}{Theorem}

\newtheorem{lemma}[theorem]{Lemma}
\newtheorem{proposition}[theorem]{Proposition}
\newtheorem{conjecture}[theorem]{Conjecture}
\newtheorem{conjx}{Conjecture}

\newcommand{\ejk}[1]{#1}

\newcommand{\abs}[1]{\left\vert #1 \right\vert}
\newcommand{\eps}{\epsilon}

\theoremstyle{definition}

\newtheorem{example}[theorem]{Example}
\newtheorem{definition}[theorem]{Definition}

\title{Uniquely optimal codes of low complexity are symmetric}

\author{Emily~J.~King\footnote{Department of Mathematics, Colorado State University, Fort Collins, CO, USA} \quad
Dustin~G.~Mixon\footnote{Department of Mathematics, The Ohio State University, Columbus, OH, USA} \quad Hans~Parshall\footnote{WellSense Health Plan, Charlestown, MA, USA}
\quad Chris~Wells\footnote{Department of Mathematics and Statistics, Auburn University, Auburn, AL, USA}}
\date{}

\begin{document}
\maketitle

\begin{abstract}
We formulate explicit predictions concerning the symmetry of optimal codes in compact metric spaces.
This motivates the study of optimal codes in various spaces where these predictions can be tested.
\end{abstract}
\section{Introduction}

Solutions to geometric extremal problems often exhibit a notably high degree of symmetry.
Fejes T\'{o}th observed this phenomenon in~\cite{Toth:64,Toth:86}, in which he elaborates on many examples, including the Tammes problem of arranging points on the sphere so that the minimum distance is maximized.
For this problem, optimal configurations include the vertices of the tetrahedron, the octahedron, and the icosahedron~\cite{Toth:40}.
By virtue of their striking symmetry, these Platonic solids were well understood by Euclid long before the Dutch botanist Tammes was inspired by the regular distribution of pores on spherical pollen grains, and yet they independently arise as solutions to a seemingly unrelated geometric extremal problem.

The recent literature offers numerous incarnations of this mysterious correspondence between optimality and symmetry.
Cohn and Kumar~\cite{CohnK:07} showed that there are a handful of configurations in $S^{d-1}$ that simultaneously minimize an infinite class of natural choices of potentials, and each of these configurations curiously exhibits a high degree of symmetry.
Viazovska~\cite{Viazovska:17} established that the $E_8$ lattice gives the densest possible packing of spheres in $\mathbb{R}^8$, and a similar approach later produced the analogous result for $\mathbb{R}^{24}$ in terms of the Leech lattice~\cite{CohnKMRV:17}.
De Grey~\cite{deGrey:18} identified the first known $5$-chromatic unit-distance graph, and a follow-on Polymath project~\cite{Polymath:online} established that the corresponding planar configuration of $1581$ points resides in a ring extension of $\mathbb{Z}$ generated by only four complex numbers.
Kopp~\cite{Kopp:18} provided evidence that arrangements of $d^2$ points in $\mathbb{CP}^{d-1}$ that maximize the minimum distance are given by the Heisenberg--Weyl orbit of a point that is easily expressed in terms of Stark units. This connection has been strengthened in further articles, e.g., \cite{ApplebyFK25}.

These baffling coincidences between optimality and symmetry demand an explanation.
Along these lines, the authors are aware of two partial explanations in two specific settings.
First, the Erd\H{o}s distinct distances problem asks for the asymptotic form of the minimum number $d(n)$ of distinct distances between $n$ points in the plane.
Elekes and Sharir~\cite{ElekesS:10} showed that a set with few distances necessarily exhibits a large number of partial symmetries, and Guth and Katz~\cite{GuthK:15} later managed to bound these partial symmetries to obtain the near-optimal lower bound of $d(n)=\Omega(n/\log n)$.
As such, in this setting, it is understood how optimality implies (partial) symmetry.
For the reverse direction, we look to the setting of optimal codes in complex projective spaces.
Here, we seek arrangements of $n$ points in $\mathbb{CP}^{d-1}$ that maximize the minimum distance.
Consider an arrangement with the property that for every pair of ordered pairs of distinct points, there exists a projective unitary operator that permutes the arrangement while mapping one ordered pair to the other.
As established in~\cite{IversonM:18,IversonM:19}, these doubly transitive arrangements are necessarily optimal projective codes whenever $n>d$.
That is, in some sense, symmetry implies optimality.

In this paper, we propose an explanation in the ``optimality implies symmetry'' direction that appears to hold in a much more general setting.
In particular, we say an arrangement $C$ of points in a compact metric space $(M,d)$ forms an \textbf{optimal code} if its minimum distance $\delta(C)$ is as large as possible, and we consider compact metric spaces that enjoy a nontrivial isometry group.
In this setting, we conjecture that uniquely optimal codes of low complexity are necessarily invariant under some nontrivial isometry.
We start in the next section by developing some intuition in the context of spherical codes.
Next, Section~3 provides the main definitions and conjectures for the remainder of the paper.
These conjectures elevate some of Fejes T\'{o}th's observations to explicit predictions that can be tested with examples.
In Section~4, we consider a laundry list of such examples that illustrate the veracity of our conjectures.
(For the record, we studied most of these examples \textit{after} formulating our conjectures.)
We conclude in Section~5 with a discussion of various open problems.

\section{A motivating example}

Consider the problem of spherical codes, where for each $d$ and $n$, we seek arrangements of $n$ points in $S^{d-1}$ that maximize the minimum distance.
Any natural choice of distance between $x,y\in S^{d-1}$ is a decreasing function of $\langle x,y\rangle$.
As such, we seek $X\subseteq S^{d-1}$ of size $n$ for which $\max\{\langle x,y\rangle:x,y\in X,x\neq y\}$ is minimized.
Rankin~\cite{DavenportH:51,Rankin:55} provides two relevant bounds:

\begin{proposition}
\label{prop.rankin}
Consider any $X\subseteq S^{d-1}$ of size $n$.
Then
\begin{itemize}
\item[(a)]
$\max\{\langle x,y\rangle:x,y\in X,x\neq y\}\geq-\tfrac{1}{n-1}$, and
\item[(b)]
$\max\{\langle x,y\rangle:x,y\in X,x\neq y\}\geq0$ whenever $n\geq d+2$.
\end{itemize}
\end{proposition}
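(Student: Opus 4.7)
The plan is to prove both bounds by classical elementary arguments: part (a) via the nonnegativity of the squared norm of the sum of all vectors, and part (b) by induction on the dimension $d$, showing that at most $d+1$ unit vectors in $\mathbb{R}^d$ can have pairwise strictly negative inner products.

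For part (a), I would label the points $x_1,\ldots,x_n$ and start from the identity
\[
0 \leq \Bigl\|\sum_{i=1}^n x_i\Bigr\|^2 = \sum_{i=1}^n \|x_i\|^2 + \sum_{i\neq j}\langle x_i,x_j\rangle = n + \sum_{i\neq j}\langle x_i,x_j\rangle.
\]
Hence the sum of the $n(n-1)$ off-diagonal inner products is at least $-n$, so their average, and therefore their maximum, is at least $-\tfrac{1}{n-1}$.

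For part (b), I would argue by contradiction: assume $\langle x_i,x_j\rangle<0$ for every $i\neq j$, and show this forces $n\leq d+1$. The key lemma is that any collection of $m$ vectors in $\mathbb{R}^d$ with pairwise strictly negative inner products satisfies $m\leq d+1$, proven by induction on $d$. The base case $d=1$ is immediate. For the inductive step, given $v_1,\ldots,v_m$ in $\mathbb{R}^d$ with pairwise negative inner products, project $v_2,\ldots,v_m$ onto the hyperplane orthogonal to $v_1$, obtaining $w_i = v_i - \tfrac{\langle v_i,v_1\rangle}{\|v_1\|^2}v_1$. A direct computation gives
\[
\langle w_i,w_j\rangle = \langle v_i,v_j\rangle - \frac{\langle v_i,v_1\rangle\langle v_j,v_1\rangle}{\|v_1\|^2},
\]
and since $\langle v_i,v_1\rangle$ and $\langle v_j,v_1\rangle$ are both negative, the subtracted term is positive, so $\langle w_i,w_j\rangle<0$. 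The inductive hypothesis applied in the $(d-1)$-dimensional hyperplane yields $m-1\leq d$, i.e., $m\leq d+1$. Consequently, if $n\geq d+2$ then not all pairwise inner products can be negative, giving the desired bound.

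The routine obstacle is simply the projection computation in the inductive step; everything else is a one-line calculation. One should also verify that the $w_i$ are nonzero (which holds because $\langle w_i,w_i\rangle = \|v_i\|^2 - \langle v_i,v_1\rangle^2/\|v_1\|^2 > 0$ unless $v_i$ is a negative scalar multiple of $v_1$, which is impossible once $m\geq 3$ since then some other $v_j$ would need to have negative inner product with both $v_1$ and $v_i$), so the induction is applied to a genuine configuration of $m-1$ vectors in a $(d-1)$-dimensional space.
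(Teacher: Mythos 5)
Your proof is correct. Note that the paper does not actually prove Proposition~1; it only cites Rankin's original 1955 paper and Ericson--Zinoviev for a modern treatment, so there is no in-paper argument to compare against. Your part~(a) is the standard argument from $0\leq\|\sum_i x_i\|^2$ and averaging over the $n(n-1)$ ordered off-diagonal pairs. Your part~(b) is the classical induction on dimension via Gram--Schmidt-style projection: the computation of $\langle w_i,w_j\rangle$ is right, the sign argument works because the cross term $\langle v_i,v_1\rangle\langle v_j,v_1\rangle/\|v_1\|^2$ is strictly positive, and you correctly flag and resolve the subtlety that some $w_i$ could vanish (handling it via the observation that a third vector cannot have negative inner product with both $v_1$ and $-v_1$, so the degenerate case only arises when $m\leq 2$, where the bound $m\leq d+1$ is trivial). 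Both arguments are standard and essentially the ones Rankin used; they are exactly what the cited references supply.
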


See~\cite{EricsonZ:01} for a modern treatment of these bounds.
Equality is achieved in Propositon~\ref{prop.rankin}(a) precisely when $X$ forms the vertices of an $(n-1)$-dimensional simplex centered at the origin.
For this reason, Propositon~\ref{prop.rankin}(a) is known as \textbf{Rankin's simplex bound}.
Meanwhile, Propositon~\ref{prop.rankin}(b) is known as \textbf{Rankin's orthoplex bound} since equality is achieved by the vertices of an orthoplex when $n=2d$.
Observe that removing at most $d-2$ points from the orthoplex produces yet another optimal code.

Beyond the spherical codes that achieve equality in Rankin's bounds, there are a handful of examples that have been proven optimal.
Some of these are highly symmetric, such as the icosahedron~\cite{Toth:40} or the snub cube~\cite{Robinson:61}.
See~\cite{CohnK:07,BallingerBCGKS:09} and references therein for additional higher-dimensional examples.
Others are far less symmetric and arise from a tour de force of global optimization; see~\cite{MusinT:15} and references therein.
As one might expect, optimal codes that lack symmetry are much more complicated to describe (i.e., they exhibit large complexity in some subjective sense).
In small dimensions, numerical optimization has delivered various putatively optimal codes that are available in~\cite{Sloane:online}.
To date, there is no $d\geq3$ for which there are infinitely many $n$ such that an explicit code of size $n$ in $S^{d-1}$ is known (or even conjectured) to be optimal.

This paper draws inspiration from codes that achieve equality in Rankin's orthoplex bound.
Before collecting further observations, we provide a characterization of these codes.
(Here and throughout, we denote $[n]:=\{1,\ldots,n\}$.)

\begin{theorem}
\label{thm.orthoplex characterization}
Fix $d\geq 2$ and $k\in\{2,\ldots,d\}$ and consider any $X\subseteq S^{d-1}$ with $|X|=d+k$ that achieves equality in Rankin's orthoplex bound.
Then there exists a possibly empty subset $X_0\subseteq X$ and a partition $X_1\sqcup\cdots\sqcup X_l=X\setminus X_0$ with $l\geq k$ such that
\begin{itemize}
\item[(i)]
$|X_0|=\operatorname{dim}\operatorname{span}X_0$,
\item[(ii)]
$|X_i|=\operatorname{dim}\operatorname{span}X_i+1$ for each $i\in[l]$, and
\item[(iii)]
$\operatorname{span}X_i\perp\operatorname{span}X_j$ whenever $i\neq j$.
\end{itemize}
\end{theorem}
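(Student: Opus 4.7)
The plan is to exploit the constraint $\langle x,y\rangle \leq 0$ for all distinct $x,y \in X$ (which is equivalent to achieving equality in Rankin's orthoplex bound) in order to decompose $X$ into orthogonally-spanned blocks, each of which is either a single vector or a ``minimally dependent" set supporting exactly one linear relation.

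The first key step I would carry out is a Perron--Frobenius-type observation: any linear relation $\sum_{x \in X} c_x x = 0$ splits as $\sum_x c_x^+ x = 0 = \sum_x c_x^- x$, where $c^\pm$ denote the positive and negative parts of $c$. Indeed, setting $w := \sum_x c_x^+ x = \sum_x c_x^- x$, one computes $\|w\|^2 = \sum_{x \neq y} c_x^+ c_y^- \langle x, y\rangle \leq 0$ (the diagonal vanishes because $c_x^+ c_x^- = 0$), forcing $w = 0$. Consequently I may restrict attention to relations of the form $\sum_{x \in S} c_x x = 0$ with strictly positive coefficients on some support $S \subseteq X$. A second observation is then immediate: for such a non-negative relation and any $y \notin S$, the identity $0 = \sum_{x \in S} c_x \langle x, y\rangle$ is a sum of non-positive terms, so every $\langle x, y\rangle = 0$. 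Thus the support $S$ is orthogonal to its complement $X \setminus S$.

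Next, I would consider the graph $\Gamma$ on vertex set $X$ with $x \sim y$ iff $\langle x, y\rangle < 0$, and let $C_1, \ldots, C_m$ be its connected components; vectors in distinct components are orthogonal by definition of $\Gamma$. Within each component $C$, I would establish (a) every non-negative relation supported in $C$ has full support $C$ --- otherwise the orthogonality observation would disconnect $\Gamma|_C$ --- and (b) the relation is unique up to scaling, since two linearly independent full-support non-negative relations $c, d$ could be combined as $d - \mu_0 c$ with $\mu_0 = \min_{x \in C} d_x/c_x$ to produce a non-negative relation of strictly smaller support, contradicting (a). It follows that each $C$ with $|C| \geq 2$ satisfies $|C| = \dim \operatorname{span} C + 1$, while singleton components trivially satisfy $|C| = 1 = \dim \operatorname{span} C$.

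To finish, I would let $X_0$ be the union of the singleton components and label the multi-vector components $X_1, \ldots, X_l$; conditions (i)--(iii) are then immediate, with (iii) following from the pairwise orthogonality of distinct components established via $\Gamma$. For the count $l \geq k$, summing dimensions across components gives $d + k = |X| = \dim \operatorname{span} X_0 + \sum_{i=1}^l (\dim \operatorname{span} X_i + 1) = \dim \operatorname{span} X + l \leq d + l$. The main obstacle I anticipate is the rigidity claim (b) --- that each multi-vector component carries exactly one linear dependence up to scale --- which is what pins down the ``simplex-like" structure inside each block; once the Perron--Frobenius splitting is in place, this reduces to the combinatorial combination argument described above.
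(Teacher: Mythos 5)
Your approach closely parallels the paper's in spirit: both proofs decompose $X$ according to the connected components of a suitable graph on $X$ and use a Perron--Frobenius-type positivity argument to control linear dependencies. Where the paper works spectrally with $A = I - G$ and invokes the Perron--Frobenius theorem, you work directly with relations, using the $c = c^+ - c^-$ splitting and the computation $\|w\|^2 \le 0$ to force any relation to have definite sign on each component; your steps (a) and (b) are a hands-on substitute for the paper's appeal to the simplicity of the top eigenvalue.

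There is, however, a genuine gap in the final assembly. You assert that ``each $C$ with $|C| \geq 2$ satisfies $|C| = \dim\operatorname{span} C + 1$,'' i.e.\ that every multi-vector component supports a dependency. But (a) and (b) only show that a component supports \emph{at most one} dependency up to scale; they do not produce one. A multi-vector component can be linearly independent. For example, with $d = 4$, $k = 2$, take
\[
X = \Big\{ e_1,\ -\tfrac12 e_1 + \tfrac{\sqrt3}{2} e_2,\ e_3,\ -e_3,\ e_4,\ -e_4 \Big\} \subseteq S^3.
\]
All pairwise inner products are $\leq 0$, so $X$ achieves equality in Rankin's orthoplex bound, and $\Gamma$ has three two-element components $\{e_1, -\tfrac12 e_1 + \tfrac{\sqrt3}{2} e_2\}$, $\{e_3,-e_3\}$, $\{e_4,-e_4\}$. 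The first is linearly independent, so your prescription (singleton components to $X_0$, all multi-vector components to $X_1,\dots,X_l$) violates condition (ii) here, and the dimension count $\sum_{i=1}^l(\dim\operatorname{span}X_i + 1)$ is off by one for each such component. The repair is to classify components by whether they carry a dependency rather than by size: let $X_0$ be the union of all linearly independent components (singleton or not) and let $X_1,\dots,X_l$ enumerate the components that do carry a (necessarily unique) dependency. This matches what the paper implicitly does, since its $X_0 := X\setminus\bigcup_i X_i$ collects exactly the components whose Perron vector fails to lie in the null space of $G$, and such components may well have more than one element. With that correction, your dimension count and the bound $l \geq k$ go through.
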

We note that this result also appears as Theorem 3 in \cite{Kuperberg:07}, where the proof is geometric in nature.
\begin{proof}
Let $G$ denote the Gram matrix of $X$, and interpret $A:=I-G$ as a weighted adjacency matrix of a graph. 
Permute rows and columns of $A$ so that it is block diagonal with each block corresponding to a connected component.
For each submatrix that resides in a diagonal block, we apply the Perron--Frobenius theorem for irreducible matrices to conclude that the largest eigenvalue has multiplicity $1$, and furthermore, the corresponding eigenvector is strictly positive on that block.
Considering the top eigenvalues of $A$ correspond to the bottom eigenvalues of $G$, it follows that the null space of $G$ is spanned by some subset $\{v_1,\ldots,v_l\}$ of these eigenvectors.

Notice that $l\geq k$ by rank--nullity.
Next, for each $i\in[l]$ let $X_i \subseteq X$ denote the set of indices of nonzero entries in $v_i$. Note that there is a unique linear dependency on $X_i$ since the corresponding eigenvalue in the corresponding block of $A$ has multiplicity $1$.
This implies (ii).
Put $X_0:=X\setminus\bigcup_{i=1}^l X_i$.
Then (iii) follows from the block structure of $A$.
Finally, we obtain (i):
\[
\operatorname{dim}\operatorname{span}X_0
=\operatorname{dim}\operatorname{span}X-\sum_{i=1}^l\operatorname{dim}\operatorname{span}X_i
=(|X|-l)-\sum_{i=1}^l(|X_i|-1)
=|X_0|.
\qedhere
\]
\end{proof}

Note that Theorem~\ref{thm.orthoplex characterization}(iii) indicates that $k\leq l\leq d$, and so equality in Rankin's orthoplex bound requires $|X|\leq 2d$.
Furthermore, if $k=d$, then $l=d$, in which case (iii) implies that $X_0$ is empty and $\operatorname{dim}\operatorname{span}X_i=1$ for each $i\in[l]$, and then (ii) and (iii) together imply that $X$ consists of the vertices of a $d$-dimensional orthoplex.
Overall, the orthoplex is the largest code that achieves equality in the orthoplex bound, and is uniquely optimal up to isometry.
(This is already well known.)
To illustrate other consequences of Theorem~\ref{thm.orthoplex characterization}, we first take $d=3$, where the only remaining case to consider is $|X|=5$.
In this case, $k=2$, and so (iii) implies that $l\in\{2,3\}$.
However, (ii) implies that $|X|\geq 2l$, and so $|X|=5$ only if $l=2$.
Without loss of generality, we take $|X_1|\leq |X_2|$.
Then $X=X_0\sqcup X_1\sqcup X_2$ takes one of two forms.
If $X_0$ is empty, then $X_1$ consists of two antipodal points (at the north and south poles, say) and $X_2$ consists of three points on the equator.
If $X_0$ is nonempty, then $X$ is obtained by removing a single point form an orthoplex, and $X_0$ contains the antipode of the removed vertex.
See Figure~\ref{fig.orthoplex} for an illustration of such codes.

\begin{figure}
\begin{center}
\includegraphics[width=0.24\textwidth,trim={4cm 7cm 3cm 6cm},clip]{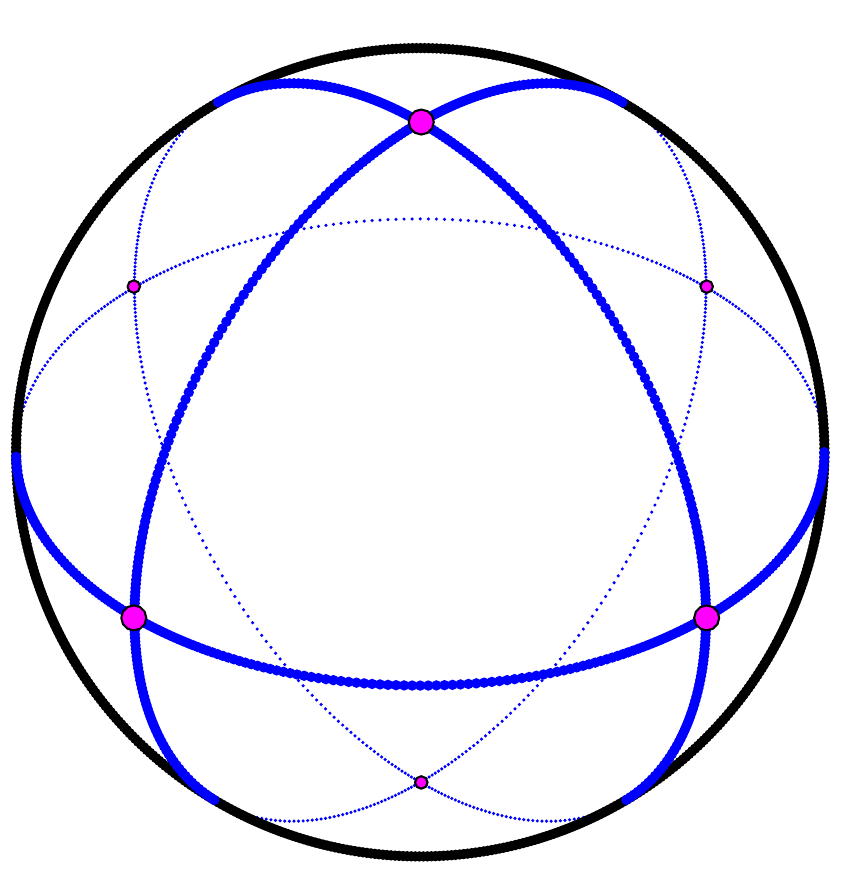}
\includegraphics[width=0.24\textwidth,trim={4cm 7cm 3cm 6cm},clip]{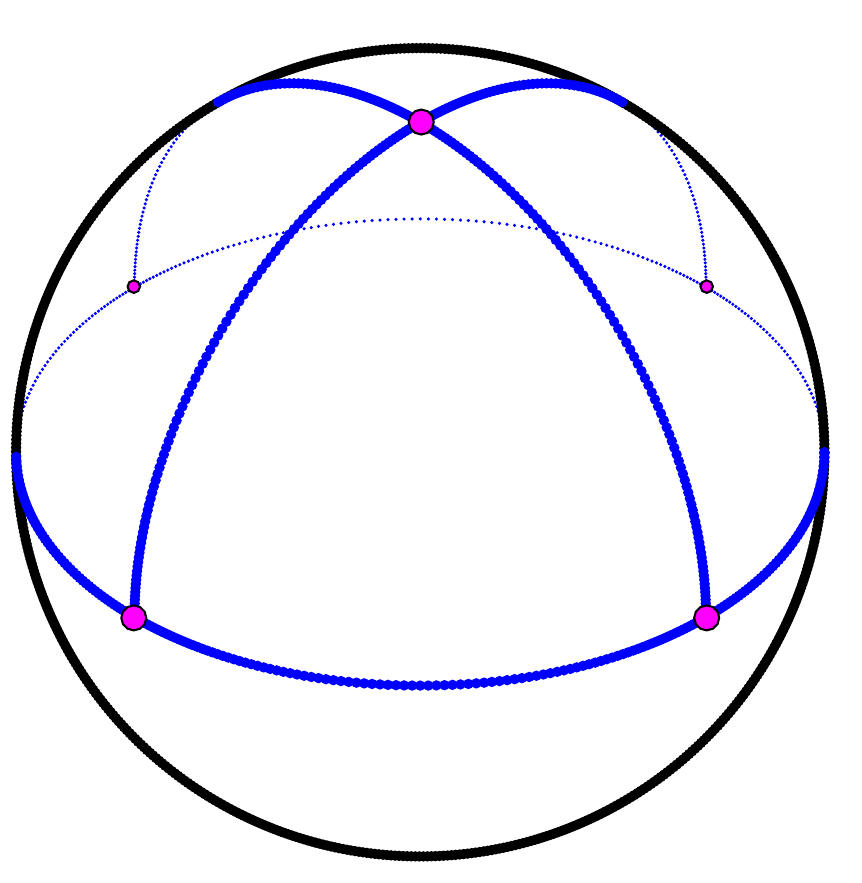}
\includegraphics[width=0.24\textwidth,trim={4cm 7cm 3cm 6cm},clip]{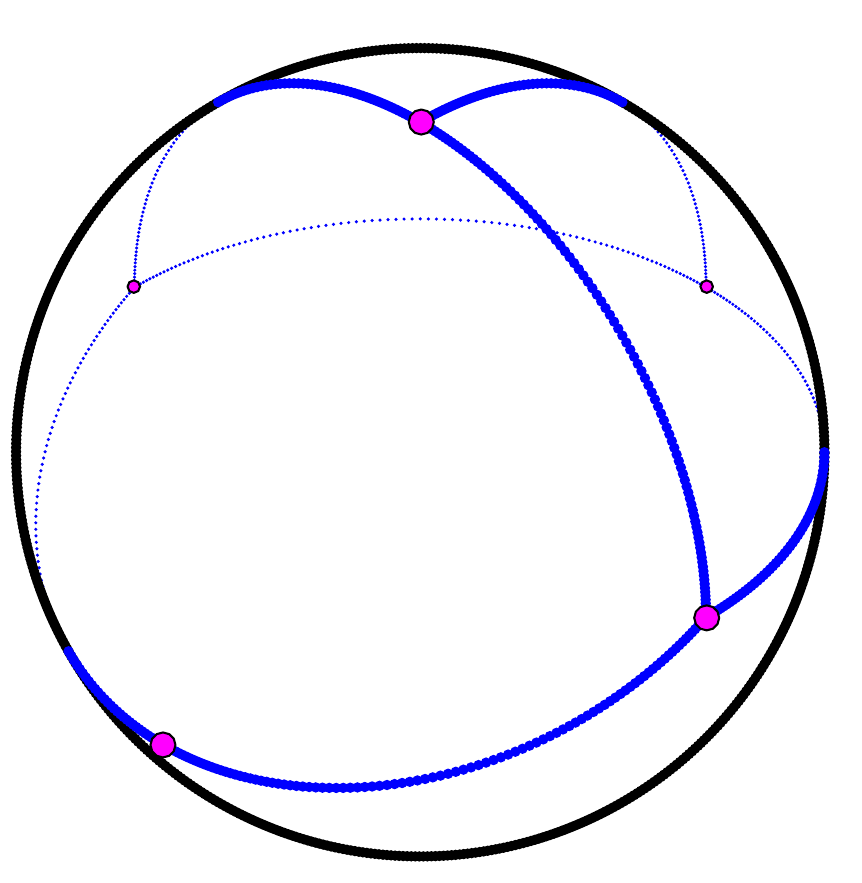}
\includegraphics[width=0.24\textwidth,trim={4cm 7cm 3cm 6cm},clip]{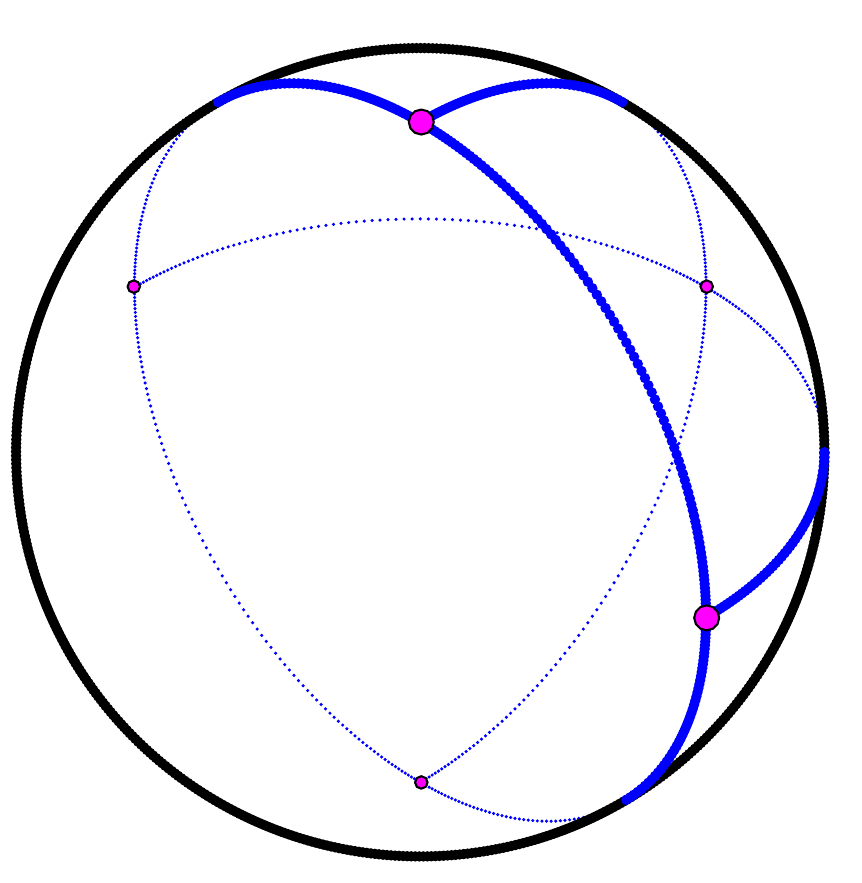}
\end{center}
\caption{\label{fig.orthoplex}
Spherical codes that achieve equality in Rankin's orthoplex bound.
We draw arcs between code points of geodesic distance $\pi/2$, i.e., the minimum distance of the code.
\textbf{(left)}
Six vertices of the octahedron (the three-dimensional orthoplex).
This code is optimal and unique up to isometry.
\textbf{(middle left)}
Removing a vertex from the octahedron produces a size-$5$ spherical code that also achieves equality in Rankin's orthoplex bound.
Such spherical codes enjoy a connected configuration space.
For example, any one of the four points on the equator is able to move away from the fifth point at the north pole.
This motion produces \textbf{(middle right)} and continues until reaching \textbf{(right)}.
Notice that for each of these codes, there exists a pair of antipodes, and the entire code is invariant under the reflection that swaps these antipodes.
The characterization in Theorem~\ref{thm.orthoplex characterization} implies that every optimal spherical code of size $5$ exhibits such a symmetry.}
\end{figure}

The uniquely optimal spherical code of size $2d$ is invariant under a representation of the signed permutation group $B_d$.
Curiously, every optimal code of size $2d-1$ is also invariant under an isometry of the sphere.
Indeed, if $|X|=2d-1$, then $k=d-1$, and since $l\geq k$ and $|X|\geq 2l$, we also have $l=d-1$.
Since $|X_i|\geq2$ for each $i\in[l]$, it then follows that $|X_i|=2$ for at least $l-1$ of these $i$.
Then $X_i$ is a pair of antipodes in $X$, and considering (iii), all other members of $X$ are orthogonal to $\operatorname{span}X_i$.
It follows that $X$ is invariant under the reflection that swaps the members of $X_i$.
(One may also take any signed permutation of the $l-1$ different $X_i$'s of size $2$.)

The above discussion of spherical codes can be distilled into a few key observations:
\begin{itemize}
\item
Every known optimal spherical code is either highly symmetric or has large complexity.
\item
The orthoplex is the uniquely optimal spherical code of size $2d$.
It is highly symmetric and has low complexity.
\item
Every optimal spherical code of size $2d-1$ is invariant under a nontrivial isometry of the sphere.
\end{itemize}
In the next section, we draw inspiration from these observations to predict analogous behaviors for optimal codes in a wide variety of compact metric spaces.

\section{Main definitions and conjectures}

In the previous section, we isolated three key observations about spherical codes.
The least rigorous aspect of these observations is the word ``complexity.''
Intuitively, the complexity of an object corresponds to how difficult it is to express.
In this section, we leverage ideas from computational complexity to make this notion rigorous.
In doing so, we obtain a notion of complexity for \textit{sequences} of objects.
Specifically, a sequence of objects is considered to have ``low'' complexity if there exists an algorithm that constructs those objects in polynomial time.
Taking inspiration from the orthoplex, we consider compact metric spaces that similarly contain uniquely optimal codes of low complexity:

\begin{definition}[informal version]
A \textbf{unicorn space} is a compact metric space $(M,d)$, where $d$ is computable in polynomial time, for which there exists a sequence $n_1<n_2<\cdots$ in $\mathbb{N}$ such that
\begin{itemize}
\item[(i)]
for each $i\in\mathbb{N}$, there exists a unique (up to isometry) optimal code $C_i$ of size $n_i$ in $(M,d)$, and
\item[(ii)]
there exists an algorithm that computes each $C_i$ in polynomial time.
\end{itemize}
We call $\{C_i\}_{i=1}^\infty$ a \textbf{unicorn sequence}.
\end{definition}

Unfortunately, we do not expect the sphere to be a unicorn space, as there does not appear to be a sequence of polynomial time--computable codes that are uniquely optimal.
This departure is an artifact of our rigorous notion of complexity, which requires access to a sequence of objects.
Next, in order for ``polynomial time'' in the above definition to make sense, we need to decide on an encoding of $(M,d)$ that a computer can interact with (e.g., strings over some alphabet).
What follows is a more careful choice of definition.
Here, $\Sigma^*$ denotes the set of all finite strings from an alphabet $\Sigma$.

\begin{definition}[formal version]
\label{def.formal}
Given a finite alphabet $\Sigma$, consider a set $S\subseteq\Sigma^*$ with pseudometric $d$ and a sequence $n_1<n_2<\cdots$ in $\mathbb{N}$ such that
\begin{itemize}
\item[(U1)]
the completion $(M,d)$ of the induced metric space $(S/{\sim},d)$ is compact, where $\sim$ is the relation induced by the pseudometric $d$,
\item[(U2)]
there exists an algorithm that on input $(x,y,k)$ with $x,y\in S$ and $k\in\mathbb{N}$ takes $\operatorname{poly}(\operatorname{length}(x),\operatorname{length}(y),k)$ runtime to return $d_0$ such that $|d_0-d(x,y)|\leq 2^{-k}$,
\item[(U3)]
for each $i\in\mathbb{N}$, there exists a unique (up to isometry) optimal code $C_i$ of size $n_i$ in $(M,d)$, and
\item[(U4)]
there exists an algorithm that on input $(i,k)\in\mathbb{N}^2$ takes $\operatorname{poly}(n_i, k)$ runtime to return a size-$n_i$ subset $C\subseteq S$ such that the induced Hausdorff distance between $C/{\sim}$ and $C_{i}$ is at most $2^{-k}$.
\end{itemize}
Then we call $(M,d)$ a \textbf{unicorn space} with \textbf{unicorn sequence} $\{C_i\}_{i=1}^\infty$.
\end{definition}

\begin{example}
To illustrate the (opaque) formalism of Definition~\ref{def.formal}, let us discuss how to use it to encode the unit sphere in $\mathbb{R}^3$ with chordal distance.
Let $S$ denote the set of strings like
\[
\texttt{(1/23456,-100/1,67/101)}
\]
that describe nonzero triples of rational numbers (in reduced form).
To be extremely explicit, let $\operatorname{dec}\colon S\to\mathbb{Q}^3\setminus\{(0,0,0)\}$ denote the implied ``decoder'', e.g.,
\[
\operatorname{dec}(\texttt{(1/23456,-100/1,67/101)})
=(\tfrac{1}{23456},-100,\tfrac{67}{101}).
\]
(Since we restricted $S$ to only involve representations of rational numbers in reduced form, it holds that $\operatorname{dec}$ is a bijection; this is not terribly important, but it might help to think of $S$ as the set $\mathbb{Q}^3\setminus\{(0,0,0)\}$.)
Now consider the pseudometric $d\colon S\times S\to\mathbb{R}$ defined by
\[
d(x,y)
=\bigg\|\frac{\operatorname{dec}(x)}{\|\operatorname{dec}(x)\|}-\frac{\operatorname{dec}(y)}{\|\operatorname{dec}(y)\|}\bigg\|.
\]
Notably, this choice of $d$ satisfies~(U2).
This is not a metric, but rather a pseudometric since $d(x,y)=0$ whenever $\operatorname{dec}(x)$ and $\operatorname{dec}(y)$ are rational multiples of each other.
Writing $x\sim y$ when $d(x,y)=0$, then $(S/{\sim},d)$ is a metric space that is isometric to the subset $Q\subseteq\mathbb{R}^3$ obtained by normalizing nonzero rational vectors.
Considering $Q$ is dense in the unit sphere, the completion $(M,d)$ of $(S/{\sim},d)$ is isometric to the unit sphere.
This explains~(U1).
Importantly, for every point $u$ in the unit sphere and every $\epsilon>0$, there exists a string in $S$ that represents a point in the unit sphere whose distance from $u$ is less than $\epsilon$.

As such, the only thing preventing the sphere from being a unicorn space is the existence of a unicorn sequence, namely, a sequence $\{C_i\}_{i=1}^\infty$ of codes that together satisfies~(U3) and~(U4).
Here, (U3) requires each $C_i$ to be uniquely optimal.
This is not terribly restrictive of the sphere, since one might expect it to exhibit uniquely optimal codes for infinitely many sizes, though even this is not known.
But then (U4) requires the entire sequence $\{C_i\}_{i=1}^\infty$ to exhibit low complexity, specifically low \textit{computational} complexity using the formalism afforded by building up $(M,d)$ from a set of strings.
This is likely asking too much of the sphere, though proving this rigorously is probably about as hard as Smale's 7th problem~\cite{Smale:98}.
(In any case, it is famously hard to establish whether certain computational problems have a polynomial-time solution.)
\end{example}

In many cases, the uniqueness required in (U3) is spoiled by a feature known as \textit{rattle}.
In particular, given an optimal code $C$ in a compact metric space $(M,d)$, we say $x\in C$ is a \textbf{rattler} if for every $\epsilon>0$, there exists $x'\in M\setminus\{x\}$ with $d(x',x)<\epsilon$ such that $(C\setminus\{x\})\cup\{x'\}$ is also an optimal code.
For example, the last three displays of Figure~\ref{fig.orthoplex} are obtained by ``rattling'' a rattler along a great circle arc.

We are now ready to formulate our first prediction, which is inspired by the fact that the orthoplex is uniquely optimal, has low complexity, and enjoys a large symmetry group: 

\begin{conjx}
\label{conj.1}
Given a unicorn space with nontrivial isometry group $G$, for any unicorn sequence $\{C_i\}_{i=1}^\infty$, there exists $i_0$ such that for every $i>i_0$, there exists $g_i\in G\setminus\{\operatorname{id}\}$ such that $g_iC_i=C_i$.
\end{conjx}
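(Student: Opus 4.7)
The plan is to argue by contradiction: suppose that for infinitely many $i$ the code $C_i$ has trivial stabilizer in $G$, and then seek a conflict with the uniform polynomial-time guarantee of (U4).

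For each such $i$, the orbit $G\cdot C_i$ is a free $G$-set, and the algorithm from (U4) outputs approximations converging to one specific representative $C_i^{*}\in G\cdot C_i$. The first step is to set this up rigorously, extracting from the $2^{-k}$-close outputs a well-defined limit point as $k\to\infty$. The assignment $i\mapsto C_i^{*}$ is then a polynomial-time computable section of the quotient $G\cdot C_i \to (G\cdot C_i)/G$. The second step is to observe that two algorithms witnessing (U4) produce sections differing by a polynomial-time computable map $i\mapsto g_i\in G$, so the chosen section is canonical up to a computable gauge. The third and decisive step is to show that when $G$ is positive-dimensional or of super-polynomial order, specifying $C_i^{*}$ inside its orbit to precision $2^{-k}$ requires choice information growing with the ``size'' of $G$, eventually exceeding the runtime budget $\operatorname{poly}(n_i,k)$.

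As a complementary and structurally more appealing route, I would emulate the argument of Theorem~\ref{thm.orthoplex characterization}: uniquely optimal codes often arise as extreme points of convex relaxations (LP, SDP, or potential-energy bounds). Whenever the data of the relaxation is $G$-invariant, a standard averaging argument produces a $G$-invariant optimizer, which by uniqueness coincides with the entire optimal set. If every polynomial-time computable uniquely optimal code can be shown to arise from such a $G$-invariant relaxation of polynomial-size description, the conjecture follows.

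The main obstacle is the decisive third step of the first route: converting a polynomial-time bound into algebraic invariance is exactly the kind of structure-from-tractability statement that tends to be out of reach of present complexity-theoretic tools. Absent additional hypotheses on $G$ (finiteness, or compactness as a bounded-dimensional Lie group), or a uniform recipe for producing invariant relaxations in the second route, I expect only case-by-case verification on concrete unicorn spaces to be feasible, which is in fact what the authors do in Section~4.
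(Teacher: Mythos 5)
This statement is labeled Conjecture~A in the paper and is not proved there; the paper only formulates it and then tests it against examples in Section~4. So there is no ``paper's own proof'' to compare against, and you have correctly identified, in your final paragraph, that the statement is open and that the authors' contribution is case-by-case verification rather than a general argument.

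As for the ideas you sketch: route~1 cannot succeed as stated, independently of the ``third and decisive step'' you flag as the obstacle. The hypothesis of the conjecture is only that $G$ is nontrivial, and in several of the paper's unicorn spaces $G$ is tiny --- for the interval, $G\cong\mathbb{Z}/2\mathbb{Z}$; for metric trees, $G$ is finite of size bounded in terms of the tree. For such $G$, specifying a representative inside a free orbit costs $O(\log|G|)=O(1)$ bits, so there is no tension whatsoever with the $\operatorname{poly}(n_i,k)$ runtime budget in (U4), and the contradiction you are aiming for simply does not materialize. Your route~1 would at best address the regime of positive-dimensional or super-polynomially large $G$, which is a strictly weaker statement than Conjecture~A. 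There is also a subtler issue in step~2: uniqueness in (U3) is ``up to isometry,'' so two algorithms satisfying (U4) need not output the same representative, and there is no reason the gauge transformation $i\mapsto g_i$ relating them should be computable in polynomial time --- deciding whether two approximate point configurations are related by an isometry of $(M,d)$ and producing a witness is itself a nontrivial computational task that (U1)--(U4) do not address.

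Route~2 (averaging over a $G$-invariant convex relaxation whose unique optimizer must then be $G$-fixed) is a genuinely reasonable heuristic and is closer in spirit to how symmetry is actually exploited in concrete optimality proofs (LP/SDP bounds, linear programming on metric graphs as in Section~4.6). But as you note, it would require showing that \emph{every} polynomial-time computable uniquely optimal code arises as the unique optimizer of some $G$-invariant relaxation of polynomial description length, which is an additional and unproven structural hypothesis. In short: your assessment that the conjecture is out of reach with present tools matches the paper's stance; the specific contradiction you propose in route~1 does not work because the conjecture already makes a nontrivial prediction when $G$ is as small as $\mathbb{Z}/2\mathbb{Z}$.
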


Next, we are interested in generalizing the phenomenon illustrated in Figure~\ref{fig.orthoplex} that comes from removing a point from an orthoplex.
Interestingly, there are many settings in which a code appears to remain optimal after removing a point.
For example, this is conjectured to occur in the triangle~\cite{Oler:61}, the triangular torus~\cite{DickensonGKX:11,ConnellyD:14}, and complex projective spaces~\cite{JasperKM:19}.
However, this is not a general phenomenon, considering the optimal codes in the interval or in the circle.
Instead, taking inspiration from Elekes--Sharir~\cite{ElekesS:10} and Guth--Katz~\cite{GuthK:15}, we pose a general conjecture in terms of \textit{partial symmetries} of the code $C$, that is, nontrivial members $g$ of the isometry group for which $|gC\cap C|$ is large.
First, we identify what it means for this intersection to be large:

\begin{definition}
The \textbf{symmetry strength} of a metric space $(M,d)$ with isometry group $G$ is the supremum of $t$ such that for every size-$t$ subset $T\subseteq M$, there exists $g\in G\setminus\{\operatorname{id}\}$ such that $gT=T$.
Here, we adopt the convention that $g\varnothing=\varnothing$ so that symmetry strength is nonnegative whenever the isometry group is nontrivial.
\end{definition}

For example, consider the Euclidean plane.
In this case, the isometry group is the Euclidean group $\operatorname{E}(2)$.
For every set $T=\{x\}$ of size $1$, one may nontrivially rotate about $x$, and so the symmetry strength is at least $1$.
In fact, for every set $T=\{x,y\}$ of size $2$, one may reflect about the affine line containing $x$ and $y$, and so the symmetry strength is at least $2$.
However, there exist sets $T=\{x,y,z\}$ of size $3$ for which $gT=T$ only if $g=\operatorname{id}$.
Explicitly, one may take $x=(0,0)$, $y=(1,0)$ and $z=(0,2)$.
Consider the triangle whose vertices are given by $T$.
Then since the edge lengths of this triangle are distinct, it follows that $gT=T$ only if $gx=x$, $gy=y$, and $gz=z$.
Since $g(0,0)=gx=x=(0,0)$, it follows that $g\in\operatorname{O}(2)\leq\operatorname{E}(2)$.
Finally, since $y$ and $z$ form a basis, $gy=y$ and $gz=z$ together imply that $g=\operatorname{id}$.
Overall, the symmetry strength of the Euclidean plane is exactly $2$.
Similarly, the symmetry strength of the $k$-dimensional Euclidean space is exactly $k$.

\begin{conjx}
\label{conj.2}
Let $(M,d)$ be a unicorn space with unicorn sequence $\{C_i\}$, nontrivial isometry group $G$, and symmetry strength $t\in[0,\infty)$.
There exists an $i_0$ such that for every $i>i_0$, if $C$ is an optimal code in $(M,d)$ of size $|C_i|-1$, then there exists $g\in G\setminus\{\operatorname{id}\}$ for which $|gC\cap C|>t$.
\end{conjx}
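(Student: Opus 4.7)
The plan is to adapt the strategy behind the characterization of optimal size-$(2d-1)$ spherical codes and push it through in the abstract setting. Given an optimal code $C$ of size $n_i - 1$, I would split on whether $C$ admits an extension by a single point to an optimal code of size $n_i$; by (U3), any such extension is isometric to $C_i$, so the problem reduces to studying how $C$ sits inside (a copy of) the uniquely optimal next code.

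In the extendable case---exemplified by removing a vertex from the orthoplex---fix some extension $C \cup \{x\}$ isometric to $C_i$. If a different point $x' \neq x$ also extends $C$ to an optimal size-$n_i$ code, then composing the two isometries into $C_i$ yields a nontrivial $g \in G$ carrying $C \cup \{x\}$ onto $C \cup \{x'\}$. A short case analysis on the image of $x$---either $g(x) = x'$, forcing $gC = C$, or $g(x) \in C$, forcing $gC = (C \setminus \{g(x)\}) \cup \{x'\}$---yields $|gC \cap C| \geq n_i - 2$ in every sub-case. If the extension is instead unique, I would invoke Conjecture~\ref{conj.1} on the isometric copy of $C_i$ to produce a nontrivial self-isometry of $C \cup \{x\}$ and restrict it to $C$ via the same bookkeeping. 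Either way, $|gC \cap C| \geq n_i - 2 > t$ once $i$ is large enough, since $t$ is fixed.

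The essentially new case is when no size-$n_i$ extension of $C$ attains the optimal minimum distance; the rattled five-point code on $S^2$ with an equilateral equatorial triangle shows this can occur even when the optimal minimum distance at size $n_i - 1$ equals that at size $n_i$. Here $C$ itself must carry the required partial symmetry, and I would pursue two complementary angles: first, a moduli-space argument---if $C$ has a rattler, the continuous family of optimal codes it generates should contain a limit point stabilized by a nontrivial isometry preserving a large subset of $C$, mirroring the equatorial reflection in Figure~\ref{fig.orthoplex}; second, a rigidity-type argument---the obstruction to extending $C$ should encode combinatorial data (contact graphs, dependency partitions analogous to Theorem~\ref{thm.orthoplex characterization}) that by itself forces an ambient symmetry.

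The main obstacle is the generality of the statement across all unicorn spaces. The motivating Theorem~\ref{thm.orthoplex characterization} relies in an essential way on Perron--Frobenius and the linear-algebraic structure of Gram matrices, neither of which is available in an abstract compact metric space. A truly uniform proof seems to require a new principle extracting symmetry directly from the low-complexity hypothesis (U4)---some ``Kolmogorov-type'' assertion that any polynomial-time describable uniquely optimal configuration has a large automorphism group---and no such principle appears to be known. Absent this, I would expect realistic progress to come from case-by-case verification on natural families (flat tori, projective spaces, compact homogeneous spaces) where the ambient structure supplies the tools that Perron--Frobenius supplies on the sphere.
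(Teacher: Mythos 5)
The statement you are addressing is Conjecture~B in the paper: it is explicitly labeled a conjecture, and the paper offers no proof of it in general. What the paper does instead is verify the conjecture in a sequence of special cases (the interval, the circle, orthotopes via Theorem~\ref{thm.orthotope code minus 1}, flat tori via Theorem~\ref{thm.computational lattice result} and conditional conjectures, metric trees via the linear program of Subsection~\ref{sec.lp}, ultrametric spaces, a family of subsets of $\ell^p$), in each case leaning on structure specific to that space. So there is no ``paper's own proof'' to compare yours against, and your proposal cannot succeed as a proof because the statement is open.

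That said, your partial reasoning in the extendable case is essentially sound as a conditional argument. If $C$ extends to an optimal code of size $n_i$, then by (U3) that extension is a copy of $C_i$; if two distinct points $x \neq x'$ each extend $C$, the isometry carrying $C\cup\{x\}$ to $C\cup\{x'\}$ is nontrivial and your case split on $g(x)$ correctly gives $|gC\cap C|\geq n_i-2$; and if the extension is unique, invoking Conjecture~A on the copy of $C_i$ supplies the nontrivial $g$, again with $|gC\cap C|\geq n_i-2$. Two caveats: this branch is conditional on Conjecture~A, which is equally unproven, and it silently needs that $g$ restricted to $M$ be a genuine isometry of $(M,d)$, which holds since $g\in G$. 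The genuinely missing piece is the non-extendable case, which you correctly flag as new (and correctly illustrate with the rattled five-point spherical code): your two proposed angles---moduli-space limits and contact-graph rigidity---are plausible research directions but are not arguments, and you say as much. Your closing assessment is accurate: the paper contains no ``Kolmogorov-type'' principle deriving symmetry from (U4), and the paper's own strategy is precisely the case-by-case verification you anticipate at the end. The honest conclusion is that you have reconstructed the authors' reduction heuristics and the location of the difficulty, not a proof, and no proof is expected to exist in the current state of the art.
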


Now that we have formulated explicit predictions (namely, Conjectures~\ref{conj.1} and~\ref{conj.2}), we can test these predictions in various spaces.
We do this in the next section.

\section{Examples and partial results}

\subsection{The interval}
\label{sec.interval}

We start by verifying that the (unit) interval is a unicorn space.
For the sake of clarity, we follow Definition~\ref{def.formal}, but for subsequent examples, we will only verify (U3).
Take $\Sigma:=\{\texttt{0},\texttt{1}\}$ and let $S$ denote the strings of characters from $\Sigma$ whose last character is \texttt{1}.
We will abuse notation by identifying $S$ with the subset of $[0,1)$ with terminating binary expansions.
Define $d(x,y):=|x-y|$ (i.e., our pseudometric is a metric in this case), and put $n_i:=i+1$ for every $i\in\mathbb{N}$.
Following (U1), we have $M=[0,1]$.
For (U2), we may ignore the input $k$ and compute $d_0:=\max(x,y)-\min(x,y)$ in $O(\operatorname{length}(x)+\operatorname{length}(y))$ time.
For (U3), we observe that the unique optimal code of size $n_i=i+1$ is given by $C_i:=\{j/i:j\in\{0,\ldots,i\}\}$.
To see this, put $n:=n_i$ and let $x_1<\cdots< x_n$ denote the members of a code $C\subseteq M$.
Then
\begin{equation}
\label{eq.pigeonhole for segment}
\delta(C)
=\min_{j\in[n-1]}(x_{j+1}-x_j)
\leq\frac{1}{n-1}\sum_{j=1}^{n-1}(x_{j+1}-x_j)
=\frac{x_{n}-x_1}{n-1}
\leq\frac{1}{n-1}, 
\end{equation}
with equality precisely when $C$ takes the claimed form.
Finally, we may compute truncated binary expansions of $j/i$ for each $j$ in polynomial time, and this gives (U4).

The only nontrivial isometry of $[0,1]$ is reflection $g\colon x\mapsto 1-x$.
Since each $C_i$ exhibits this reflection symmetry, it follows that Conjecture~\ref{conj.1} holds in this case.
Next, the symmetry strength in this case is $0$ since, for example, $g(1/3)=2/3\neq1/3$.
Meanwhile, for each $i>1$, the unique optimal code $C_{i-1}$ of size $n_i-1=i\geq2$ has the property that $|gC_{i-1}\cap C_{i-1}|=|C_{i-1}|=i\geq2>0$, and so Conjecture~\ref{conj.2} also holds in this case.
Notice that Conjecture~\ref{conj.2} follows from Conjecture~\ref{conj.1} in this case since $n_i-1=n_{i-1}$.

\subsection{The circle}
\label{sec.circle}

Consider the circle with geodesic distance.
Without loss of generality, we put
\[
M:=\{(\cos\theta,\sin\theta):\theta\in[0,2\pi)\}.
\]
The isometry group of $M$ is the orthogonal group $\operatorname{O}(2)$.
A pigeonhole argument similar to \eqref{eq.pigeonhole for segment} proves that $n$ uniformly spaced points form an optimal code, and this optimizer is unique up to isometry.
It follows that the circle is a unicorn space.
Every optimal code is invariant under reflection about the line connecting any one of its points to the origin, and so Conjecture~\ref{conj.1} holds.
Next, the symmetry strength of the circle is at least $2$ since for any $x,y\in M$, the set $\{x,y\}$ is invariant under reflection about the line that bisects the angle between $x$ and $y$.
Meanwhile, given a generic size-$3$ subset $T=\{x,y,z\}\subseteq M$, since $d(x,y)\neq d(y,z)\neq d(z,x)\neq d(x,y)$, it holds that $gT=T$ with $g\in\operatorname{O}(2)$ only if $gx=x$, $gy=y$ and $gz=z$.
By genericity, $\{x,y\}$ forms a basis for $\mathbb{R}^2$, and so $g=\operatorname{id}$.
As such, the symmetry strength is exactly $2$.
For every optimal code $C$ of size at least $3$, we may take $g$ to be reflection about the line spanned by one of its points, and then $|gC\cap C|=|C|\geq3>2$.
As such, Conjecture~\ref{conj.2} also holds in this case.
(Like the interval case, since $n_i-1=n_{i-1}$, Conjecture~\ref{conj.2} is less interesting.)

\subsection{The equilateral triangle}

Consider the region $M$ bounded by an equilateral triangle with Euclidean distance.
The isometry group of this space is the dihedral group $D_3$.
Melissen~\cite{Melissen:93} established that whenever $n$ is a triangular number $T(k):=\binom{k+1}{2}$, the unique optimal code of size $n$ comes from the hexagonal lattice.
These codes are invariant under the isometry group, and so Conjecture~\ref{conj.1} holds.
The symmetry strength of $M$ is $0$ since the $D_3$-orbit of a generic point in $M$ has size $|D_3|$.
Erd\H{o}s and Oler~\cite{Oler:61} conjecture that the optimal codes of size $T(k)-1$ are obtained by removing any point from the optimal code of size $T(k)$.
Conditional on this conjecture, then for every optimal code $C$ of size $T(k)-1$, every $g\in D_3\setminus\{\operatorname{id}\}$ has the property that $|gC\cap C|\geq|C|-2=T(k)-3$, which is strictly positive for every $k\geq3$.
As such, Conjecture~\ref{conj.2} (conditionally) holds in this case.
In addition, Lubachevsky, Graham and Stillinger \cite{LubachevskyGS:97} conjecture the form of all optimal codes of size
\[
T((k+1)p-1)+(2p+1)T(k).
\]
Notice that for $p=0$, this size is $T(k)$, for which the optimal codes are already known.
For each $p\in\mathbb{N}$, the conjectured optimal codes include $p-1$ rattlers, and so uniqueness only holds for $p=1$.
In the case $p=1$, the conjectured optimal codes exhibit full $D_3$ symmetry, which agrees with Conjecture~\ref{conj.1}.

\subsection{Orthotopes}
\label{sec.orthotopes}

Let $\leq$ denote the entrywise partial order in $\mathbb{R}^m$, select $u\in\mathbb{R}^m$ with $u\geq0$, and consider the orthotope $M:=\{x\in \mathbb{R}^m:0\leq x\leq u\}$ with distance induced by the $\infty$-norm.
The isometry group of this space is generated by all coordinate permutations that fix $u$ and all coordinate reflections of the form $x_i\mapsto u_i-x_i$. 
In the following result, we characterize when $M$ is a unicorn space.
First, we say $x\in\mathbb{R}$ is a \textbf{divisor} of $y\in\mathbb{R}$ if $y/x\in\mathbb{Z}$.
A collection $u$ of real numbers is said to be \textbf{commensurable} if they have a common divisor, in which case we let $\operatorname{gcd}(u)$ denote the greatest common divisor.

\begin{theorem}
\label{thm.unicorn orthotopes}
Select $u\in\mathbb{R}^m$ with $u\geq0$ and put $M:=\{x\in \mathbb{R}^m:0\leq x\leq u\}$.
Then $(M,\|\cdot\|_\infty)$ is a unicorn space if and only if the coordinates of $u$ are commensurable.
In this case, the following are equivalent:
\begin{itemize}
\item[(i)]
The optimal code of size $n$ in $(M,\|\cdot\|_\infty)$ is unique up to isometry.
\item[(ii)]
The optimal code of size $n$ in $(M,\|\cdot\|_\infty)$ is unique.
\item[(iii)]
$n=\prod_{i=1}^m(\frac{u_i}{\delta}+1)$ for some divisor $\delta$ of $\operatorname{gcd}(u)$.
\end{itemize}
\end{theorem}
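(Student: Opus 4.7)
The plan is to start with a cell-map packing bound that refines the usual volume bound: for any code $C \subseteq M$ of minimum $\infty$-distance $d$, the floor map $\phi(x) := (\lfloor x_1/d \rfloor, \dots, \lfloor x_m/d \rfloor)$ sends $C$ injectively into $\prod_i \{0, 1, \dots, \lfloor u_i/d \rfloor\}$, since two points with the same image would lie in a common half-open cell of $\infty$-diameter strictly less than $d$. This yields $|C| \leq \prod_i (\lfloor u_i/d \rfloor + 1)$, saturated by the explicit grid $G_d$ with spacings $s_i := u_i/\lfloor u_i/d \rfloor$.

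For (iii)$\Rightarrow$(ii), I would exploit that $n = \prod_i(u_i/\delta + 1)$ with $\delta \mid \gcd(u)$ forces $\lfloor u_i/\delta \rfloor = u_i/\delta$ and $|G_\delta| = n$, so $\phi$ must be a bijection on any optimal size-$n$ code $C$. Fixing all but the first cell-coordinate and scanning the row of $u_1/\delta + 1$ code points, the consecutive gaps in first-coordinates are each $\geq \delta$ (the only way to supply the minimum distance, since the remaining coordinates are confined to cells of width $\delta$) while summing to at most $u_1 = (u_1/\delta)\delta$; hence every gap equals $\delta$ exactly and $p^{(j)}_1 = j\delta$. Iterating over rows and axes pins $C$ to $G_\delta$ pointwise, giving literal uniqueness rather than just uniqueness up to isometry.

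The crux of the theorem is the contrapositive (i)$\Rightarrow$(iii). Given $n$ not of the form in (iii), I would set $d := d^*(n)$ and split into two cases. In \textbf{Case A}, $d$ divides every $u_i$, so $d \mid \gcd(u)$ and $|G_d| > n$; any $n$-subset of $G_d$ is optimal, and one can also delete a grid point and shift a neighbor slightly off the grid to obtain a second optimal code containing a non-grid point, since a direct distance check shows the deletion creates the needed slack. These two codes are non-isometric because the isometry group of $M$, generated by $x_i \mapsto u_i - x_i$ and the coordinate permutations fixing $u$, preserves $G_d$ setwise. In \textbf{Case B}, some $u_j$ is not a multiple of $d$, so $s_j > d$; any grid point then admits a rattler in the $j$th direction (translation by any sufficiently small $\epsilon > 0$ along $e_j$ preserves all pairwise distances $\geq d$), producing a continuous family of non-isometric optimal codes.

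The unicorn characterization then follows. In the commensurable case, $\delta_k := \gcd(u)/k$ yields a strictly increasing sequence $n_k := \prod_i(k u_i/\gcd(u) + 1)$ of sizes with unique optimal codes by (iii)$\Rightarrow$(ii), and the grid coordinates are trivially polynomial-time computable, verifying Definition~\ref{def.formal}. In the incommensurable case, $d^*(n) > 0$ cannot divide every $u_i$ simultaneously for any $n \geq 2$ (else it would be a common divisor), so every $n$ falls into Case B and lacks uniqueness up to isometry, ruling out any unicorn sequence. The principal obstacle I anticipate is Case A of the contrapositive---particularly the degenerate sub-case where every $u_i = d$ and $G_d$ consists entirely of corner points---where one must verify carefully that the puncture-and-rattle construction produces a genuinely optimal code that cannot be mapped back to a grid-subset code by any isometry of $M$.
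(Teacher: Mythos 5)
Your proof is correct and takes a genuinely different route from the paper's. The paper derives the size bound by volume (disjoint open $\infty$-balls packed into $M+B(0,\delta/2)$) and obtains uniqueness from Lemma~\ref{lem.unicorns in a jagged orthotope}, an induction over ``jagged orthotopes'' that peels off a maximal corner at each step; for (i)$\Rightarrow$(iii) it observes that the discrete isometry group forbids rattle and then walks a chain along each axis, forcing $\delta\mid u_i$. Your floor-map injection $C\hookrightarrow\prod_i\{0,\ldots,\lfloor u_i/d\rfloor\}$ sharpens the volume bound precisely when $d\nmid u_i$ for some $i$, and that sharpening does double duty: it turns (iii)$\Rightarrow$(ii) into an elementary row-scanning argument with no need for the jagged-orthotope induction, and it drives the Case~A/B split in the contrapositive of (i)$\Rightarrow$(iii). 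Your Case~A is actually more explicit than what the paper writes --- the chain argument alone establishes commensurability, while you spell out why $n<|G_d|$ would permit a puncture-and-rattle, so uniqueness forces $n=|G_d|$.

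On the obstacle you flag (the sub-case with every $u_i=d$, so $G_d$ is a set of corners): there is no real gap, and your own machinery closes it. Since $1\leq n<|G_d|$ and the grid graph on $G_d$ is connected, some removed grid point $p$ has a grid-neighbor $q$ in the code, say $p=q+de_i$. Shifting $q\mapsto q+\epsilon e_i$ keeps every pairwise distance at least $d$: if $r\in C\setminus\{q\}$ has $r_i\leq q_i$, the $i$th gap only grows; if $r_i=p_i$ then $r\neq p$ forces some coordinate $j\neq i$ with $|r_j-q_j|\geq d$; and $|r_i-q_i|\geq 2d$ handles the rest, with $q+\epsilon e_i\in M$ since $p\in M$. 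Non-isometry is then automatic from cardinality alone: the isometry group is finite (a subgroup of signed coordinate permutations), so the continuum $\{(C\setminus\{q\})\cup\{q+\epsilon e_i\}:\epsilon\in(0,d)\}$ cannot fit into a single isometry class, let alone coincide with the finitely many grid-subset codes.
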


Our proof follows from a couple of lemmas:

\begin{lemma}
\label{lem.unicorns in a jagged orthotope}
Select $u^{(1)},\ldots,u^{(k)}\in(\mathbb{N}\cup\{0\})^m$ and put
\begin{equation}
\label{eq.precube}
M:=\bigcup_{j=1}^k\{x\in \mathbb{R}^m:0\leq x\leq u^{(j)}\}.
\end{equation}
Then $M\cap\mathbb{Z}^m$ is the unique code in $(M,\|\cdot\|_\infty)$ of its size with minimum distance $1$.
\end{lemma}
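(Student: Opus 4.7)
The plan is to partition $\mathbb{R}^m$ into half-open unit cells $Q_a := a + [0,1)^m$ indexed by $a \in \mathbb{Z}^m$, and exploit the fact that any two points in the same cell are at $\|\cdot\|_\infty$-distance strictly less than $1$. Consequently, every code $C \subseteq M$ of minimum distance at least $1$ contains at most one point per cell. I would next verify that the cells meeting $M$ are in bijection with $M \cap \mathbb{Z}^m$ via $a \mapsto Q_a$: if $Q_a \cap M \neq \varnothing$, then any witness $x$ satisfies $a_i \le x_i < a_i+1$ and $0 \le x_i \le u^{(j)}_i$ for some $j$, and integrality of $a$ and $u^{(j)}$ then forces $0 \le a \le u^{(j)}$, i.e., $a \in M \cap \mathbb{Z}^m$; the converse is trivial since $a$ itself lies in $Q_a$. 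Pigeonhole then gives $|C| \le |M \cap \mathbb{Z}^m|$, with equality forcing exactly one code point $c_a$ per cell.

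The heart of the proof is showing $c_a = a$ for every $a \in M \cap \mathbb{Z}^m$, which I would do coordinatewise. Fix $i \in [m]$ and set $k := \max\{j \ge 0 : a + j e_i \in M\}$, which is finite by boundedness of $M$. Since $a + k e_i \le u^{(j'')}$ for some $j''$, we have $a + j e_i \le u^{(j'')}$ for all $0 \le j \le k$, so each cell in the chain $Q_a, Q_{a+e_i}, \ldots, Q_{a+ke_i}$ participates in the bijection. The key observation is that in the terminal cell, every $x \in Q_{a + ke_i} \cap M$ must have $x_i = a_i + k$ exactly: if $x \le u^{(j')}$, then $u^{(j')}_l \ge x_l \ge a_l$ for all $l \ne i$, and if we had $u^{(j')}_i > a_i + k$ then integrality would give $u^{(j')}_i \ge a_i + k + 1$, making $a + (k+1) e_i \le u^{(j')}$ and contradicting the maximality of $k$; hence $x_i \le u^{(j')}_i \le a_i + k$, matching $x_i \ge a_i + k$.

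To conclude, I would telescope the minimum distance inequalities $\|c_{a+(j+1)e_i} - c_{a+je_i}\|_\infty \ge 1$ along the chain. For each $l \ne i$, both consecutive code points lie in $[a_l, a_l+1)$ in coordinate $l$, so differ there by less than $1$; thus the $i$-th coordinate must account for the whole inequality, giving $(c_{a+(j+1)e_i})_i - (c_{a+je_i})_i \ge 1$. Summing and using $(c_{a+ke_i})_i = a_i + k$ yields $(c_a)_i \le a_i$, and combining with $(c_a)_i \ge a_i$ (from $c_a \in Q_a$) gives $(c_a)_i = a_i$. Repeating for each coordinate delivers $c_a = a$, so $C = M \cap \mathbb{Z}^m$. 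The main subtlety I expect is carefully formulating the terminal-cell observation: the staircase (non-convex) structure of $M$ means different cells might be witnessed by different $u^{(j)}$'s, and the argument must use the integrality of the $u^{(j)}$'s to preclude the scenario where a code point overshoots its nominal lattice coordinate into the complement of one orthotope while being covered by another.
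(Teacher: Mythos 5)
Your proof is correct, and it takes a genuinely different route from the paper. The paper first establishes a volume identity $|M\cap\mathbb{Z}^m|=\operatorname{vol}(M+B(0,\tfrac12))$, deduces from it a ``disjoint union of balls'' characterization of codes of the required size and minimum distance, and then runs an induction on $|M\cap\mathbb{Z}^m|$: it picks a maximal corner $u^{(j)}$, shows it must belong to $C$ via a deep-point-of-the-Minkowski-sum argument, removes it, and recurses on the smaller jagged orthotope. You instead tile $M$ by the half-open unit cells $a+[0,1)^m$, verify that the cells meeting $M$ are indexed exactly by $M\cap\mathbb{Z}^m$, apply pigeonhole to get one code point per cell, and then pin each code point to its lattice vertex by telescoping the minimum-distance inequalities along the chain $a,a+e_i,\dots,a+ke_i$, with the terminal cell forced to the corner by the integrality and maximality argument you give. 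Both proofs correctly exploit integrality of the $u^{(j)}$ to handle the non-convex staircase; your approach is more elementary (no volume computation, no induction) and a bit more self-contained, while the paper's volume/induction machinery is more closely aligned with how it subsequently proves Lemma~\ref{lem.unicorns in a orthotope} and Theorem~\ref{thm.orthotope code minus 1}, where the same Minkowski-sum volume bound reappears. Two small checks you should make explicit if you write this up: (1) in the terminal-cell step, the inequality $u^{(j')}_l\ge x_l\ge a_l$ should be stated for all $l$, not just $l\neq i$, since you need all coordinates to conclude $a+(k+1)e_i\le u^{(j')}$; and (2) in the telescoping step, after observing $\big|(c_{a+(j+1)e_i})_i-(c_{a+je_i})_i\big|\ge 1$, note that the sign is positive because the two $i$-coordinates lie in consecutive unit intervals, so the difference lies in $(0,2)$ and hence in $[1,2)$. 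Both are present in spirit but worth spelling out.
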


\begin{proof}
As an intermediate result, we establish that a code $C$ of size $|M\cap\mathbb{Z}^m|$ has minimum distance $1$ only if
\begin{equation}
\label{eq.disjoint union}
\operatorname{cl}\bigg(\bigsqcup_{x\in C}B(x,\tfrac{1}{2})\bigg)
=\operatorname{cl}\Big(M+B(0,\tfrac{1}{2})\Big).
\end{equation}
To see this, first note that the code $C=M\cap\mathbb{Z}^m$ satisfies \eqref{eq.disjoint union}, and so we have $|M\cap\mathbb{Z}^m|=\operatorname{vol}(M+B(0,\tfrac{1}{2}))$.
If $C$ has size $|M\cap\mathbb{Z}^m|$ and minimum distance $1$, then
\[
\operatorname{vol}(M+B(0,\tfrac{1}{2}))
=|M\cap\mathbb{Z}^m|
=|C|
=\sum_{x\in C}\operatorname{vol}(B(x,\tfrac{1}{2}))
\leq\operatorname{vol}(M+B(0,\tfrac{1}{2})).
\]
Then \eqref{eq.disjoint union} follows from the fact that the above inequality is saturated.

To prove that $M\cap\mathbb{Z}^m$ is the only code of its size in $M$ with minimum distance $1$, we prove something stronger.
In particular, we prove that it is the only code of its size with minimum distance \textit{at least} $1$.
We induct on $|M\cap\mathbb{Z}^m|$.
First, if $|M\cap\mathbb{Z}^m|=2$, then $M$ is isometric to $[0,1]$, and indeed, the unique code of size $2$ with (minimum) distance at least $1$ is $\{0,1\}$.
Now fix $r\in\mathbb{N}$ with $r\geq 2$, suppose our claim holds for all $M'$ such that $|M'\cap\mathbb{Z}^m|=r$, and consider any $M$ with $|M\cap\mathbb{Z}^m|=r+1$.
Let $C$ denote any code in $M$ with minimum distance at least $1$.
Select any $j\in[k]$ for which $u^{(j)}$ is maximal in $M$ under the entrywise partial order and put $w=u^{(j)}+\frac{1}{2}\cdot \mathbf{1}\in\operatorname{cl}(M+B(0,\tfrac{1}{2}))$, where $\mathbf{1}$ denotes the all-ones vector.
Then $u^{(j)}$ is the unique point $u\in M$ such that $w\in\operatorname{cl}(B(u,\tfrac{1}{2}))$, and so \eqref{eq.disjoint union} implies $u^{(j)}\in C$.
Put $M':=M\cap\{x\in\mathbb{R}^m: x\not>u^{(j)}-\mathbf{1}\}$, which can be expressed in the form \eqref{eq.precube}.
Then $C\setminus\{u^{(j)}\}\subseteq M'$ has minimum distance at least $1$ and the maximality of $u^{(j)}$ implies
\[
|C\setminus\{u^{(j)}\}|
=|M\cap\mathbb{Z}^m|-1
=|M'\cap\mathbb{Z}^m|.
\]
The induction hypothesis then gives
\[
C
=\{u^{(j)}\}\cup(C\setminus\{u^{(j)}\})
=\{u^{(j)}\}\cup(M'\cap\mathbb{Z}^m)
=M\cap\mathbb{Z}^m.
\qedhere
\]
\end{proof}

\begin{lemma}
\label{lem.unicorns in a orthotope}
Select $u\in(\mathbb{N}\cup\{0\})^m$ and put $M:=\{x\in \mathbb{R}^m:0\leq x\leq u\}$.
Then $M\cap\mathbb{Z}^m$ is the unique optimal code in $(M,\|\cdot\|_\infty)$ of size $\prod_{i=1}^m(u_i+1)$.
\end{lemma}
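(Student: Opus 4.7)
The plan is to combine a volume-packing bound with Lemma~\ref{lem.unicorns in a jagged orthotope}. The target code $M\cap\mathbb{Z}^m$ has the correct cardinality $\prod_{i=1}^m(u_i+1)$ and minimum $\|\cdot\|_\infty$-distance exactly $1$, so it suffices to prove (a) that no code of this size can achieve minimum distance strictly greater than $1$, and (b) that any code attaining minimum distance $\geq 1$ must coincide with $M\cap\mathbb{Z}^m$. Part (b) is immediate from Lemma~\ref{lem.unicorns in a jagged orthotope} applied with $k=1$ and $u^{(1)}=u$, since that lemma is proved for minimum distance at least $1$. So the real work is part (a).

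For part (a), I would replay the volume argument used in the proof of Lemma~\ref{lem.unicorns in a jagged orthotope}. Let $C\subseteq M$ be a code of size $n:=\prod_{i=1}^m(u_i+1)$ and minimum distance $\delta$. The open $\infty$-balls $B(x,\delta/2)$ for $x\in C$ are pairwise disjoint and contained in $M+B(0,\delta/2)$, and in the $\infty$-norm this enlargement is exactly $\prod_{i=1}^m[-\delta/2,u_i+\delta/2]$, of volume $\prod_{i=1}^m(u_i+\delta)$. Disjointness of the balls yields
\[
n\,\delta^m \;=\; \sum_{x\in C}\operatorname{vol}(B(x,\tfrac{\delta}{2})) \;\leq\; \prod_{i=1}^m(u_i+\delta),
\]
which rearranges to $\delta^m\leq \prod_{i=1}^m\frac{u_i+\delta}{u_i+1}$.

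Now I would argue that this forces $\delta\leq 1$ (assuming $M$ is not a single point, which is the trivial case). For each $i$ with $u_i>0$, the elementary inequality $(u_i+\delta)\leq(u_i+1)\delta$ holds with equality exactly when $\delta=1$; and for $u_i=0$ one has $\frac{u_i+\delta}{u_i+1}=\delta$ trivially. Multiplying these $m$ inequalities gives $\prod_{i=1}^m\frac{u_i+\delta}{u_i+1}\leq \delta^m$, with strict inequality whenever $\delta>1$ and at least one $u_i>0$. Combined with the volume bound above, this forces $\delta\leq 1$, so the optimal minimum distance of a size-$n$ code is exactly $1$, achieved by $M\cap\mathbb{Z}^m$. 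Uniqueness then follows from Lemma~\ref{lem.unicorns in a jagged orthotope}.

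I do not anticipate a real obstacle here: the volume bound is standard once one observes that the $\infty$-norm thickening of an axis-aligned box is again an axis-aligned box, and the monotonicity $\frac{u_i+\delta}{u_i+1}\leq\delta$ for $\delta\geq 1$ is a one-line calculation. The only subtle point is that the statement claims uniqueness (not merely up to isometry), and this is precisely what the strengthened form of Lemma~\ref{lem.unicorns in a jagged orthotope} delivers, so the inductive/jagged extension done in the previous lemma is exactly why that step was framed in the generality it was.
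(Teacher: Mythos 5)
Your proposal is correct and follows essentially the same approach as the paper: a volume-packing bound comparing $n\,\delta^m$ with $\prod_i(u_i+\delta)$ to rule out $\delta>1$, followed by invoking Lemma~\ref{lem.unicorns in a jagged orthotope} for uniqueness. The paper phrases the rearrangement as $|C|\le\prod_i(u_i/\delta+1)$ and notes this is strictly below $\prod_i(u_i+1)$ when $\delta>1$, but this is the same monotonicity you spelled out.
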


\begin{proof}
We will show that $M\cap\mathbb{Z}^m$ is an optimal code of its size, and then uniqueness follows from Lemma~\ref{lem.unicorns in a jagged orthotope}.
First, the minimum distance of $M\cap\mathbb{Z}^m$ is $1$.
Now let $C\subseteq M$ be any code with minimum distance $\delta$.
Then the open $\infty$-balls $B(x,\frac{\delta}{2})$ of radius $\frac{\delta}{2}$ centered at each $x\in C$ are disjoint and reside in the Minkowski sum $M+B(0,\frac{\delta}{2})$.
We compare volumes to obtain the bound
\[
|C|
\leq\frac{\operatorname{vol}(M+B(0,\frac{\delta}{2}))}{\operatorname{vol}(B(0,\frac{\delta}{2}))}
=\frac{\prod_{i=1}^m(u_i+\delta)}{\delta^m}
=\prod_{i=1}^m\bigg(\frac{u_i}{\delta}+1\bigg).
\]
In particular, $\delta>1$ implies $|C|<\prod_{i=1}^m(u_i+1)$.
The contrapositive reveals that $M\cap\mathbb{Z}^m$ is an optimal code of its size.
\end{proof}

\begin{proof}[Proof of Theorem~\ref{thm.unicorn orthotopes}]
If the coordinates of $u$ are commensurable, then we can scale $u$ in such a way that Lemma~\ref{lem.unicorns in a orthotope} gives (iii)$\Rightarrow$(ii), while (ii)$\Rightarrow$(i) is immediate.
This combined with the explicit construction in Lemma~\ref{lem.unicorns in a orthotope}, implies that $(M,\|\cdot\|_\infty)$ is a unicorn space.
Now suppose the optimal code of size $n$ in $(M,\|\cdot\|_\infty)$ is unique up to isometry (and $u$ is not necessarily commensurable).
Since the isometry group is discrete, this implies that the optimal code does not exhibit rattle.
Put $\delta=\delta(C)$ and select any $i\in[m]$.
Then there exists $x^{(0)}\in C$ with $x^{(0)}_i=0$, since otherwise any $x\in C$ with the smallest $i$-coordinate can rattle along the segment toward the point $x'$ that equals $x$ in every coordinate except $x'_i=0$.
Next, there exists $x^{(1)}$ such that $\|x^{(1)}-x^{(0)}\|_\infty=\delta$ and $x^{(1)}_i=\delta$, since otherwise $x^{(0)}$ can rattle along the segment toward the point $x'$ that equals $x^{(0)}$ in every coordinate except $x'_i=\delta$.
In this way, we iteratively obtain $x^{(j+1)}$ such that $\|x^{(j+1)}-x^{(j)}\|_\infty=\delta$ and $x^{(j+1)}_i=(j+1)\delta$.
This process ends with $j_0$ such that $x^{(j_0)}_i>u_i-\delta$.
It must hold that $x^{(j_0)}_i=u_i$, since otherwise $x^{(j_0)}$ can rattle along the segment toward the point $x'$ that equals $x^{(j_0)}$ in every coordinate except $x'_i=u_i$.
Then $u_i=x^{(j_0)}_i=j_0\delta$, i.e., $\delta$ is a divisor of $u_i$.
Since our choice for $i\in[m]$ was arbitrary, we conclude that the coordinates of $u$ are commensurable, and furthermore, (i)$\Rightarrow$(iii), as desired.
\end{proof}

Up to scaling, every optimal code in $(M,\|\cdot\|_\infty)$ that is unique up to isometry takes the form described in Lemma~\ref{lem.unicorns in a orthotope}.
Furthermore, every code of this form is invariant under the reflection $x_1\mapsto u_1-x_1$ (for example).
It follows that Conjecture~\ref{conj.1} holds for all orthotopes $(M,\|\cdot\|_\infty)$.
Next, we seek to test Conjecture~\ref{conj.2}.
To this end, we start with a lemma:

\begin{lemma}
\label{lem.big codes have known min distance}
Select $u\in\mathbb{N}^m$ and put $M:=\{x\in \mathbb{R}^m:0\leq x\leq u\}$.
Any code $C$ in $(M,\|\cdot\|_\infty)$ with minimum distance $\delta(C)>1$ necessarily satisfies $|C|\leq\prod_{i=1}^m u_i$.
\end{lemma}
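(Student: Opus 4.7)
The plan is to partition the orthotope $M$ into $\prod_{i=1}^m u_i$ closed unit subcubes and apply pigeonhole, leveraging the \emph{strict} inequality $\delta(C) > 1$.

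Concretely, I would index subcubes by $a \in \prod_{i=1}^m\{0,\ldots,u_i-1\}$, setting $Q_a := \prod_{i=1}^m [a_i, a_i+1]$. These cover $M$, each has $\|\cdot\|_\infty$-diameter $1$, and there are exactly $\prod_{i=1}^m u_i$ of them. Next I would define a rounding map $\phi\colon M \to \prod_{i=1}^m\{0,\ldots,u_i-1\}$ coordinate-wise by $\phi_i(x_i) := \lfloor x_i\rfloor$ when $x_i < u_i$ and $\phi_i(u_i) := u_i - 1$, so that $x \in Q_{\phi(x)}$ for every $x \in M$. The key step is then the observation that any two $x, y \in C$ with $\phi(x) = \phi(y)$ both lie in a common $Q_a$, forcing $\|x - y\|_\infty \leq 1$, which together with $\delta(C) > 1$ forces $x = y$. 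Hence $\phi|_C$ is injective and $|C| \leq \prod_{i=1}^m u_i$.

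I do not anticipate a real obstacle; the only delicate point is the boundary convention that keeps the codomain of $\phi$ at size exactly $\prod_{i=1}^m u_i$ rather than $\prod_{i=1}^m(u_i+1)$. It is worth noting that the volume bound used in the proof of Lemma~\ref{lem.unicorns in a orthotope}, namely $|C| \leq \prod_{i=1}^m(u_i/\delta + 1)$, does not by itself give the claimed inequality: already for $m=1$, $u_1 = 3$, $\delta \to 1^+$, that bound yields only $|C| < 4$ rather than the desired $|C| \leq 3$. The pigeonhole argument sketched above is what converts the strict inequality $\delta(C) > 1$ directly into the integer improvement to $\prod_{i=1}^m u_i$.
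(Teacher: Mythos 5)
Your proof is correct. Partitioning $M$ into the $\prod_i u_i$ closed unit cubes $Q_a$, fixing the boundary convention so that the codomain of $\phi$ has exactly $\prod_i u_i$ elements, and observing that any two code points landing in the same $Q_a$ would violate $\delta(C)>1$, gives the injectivity of $\phi|_C$ and hence the bound.

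The paper proves the same statement by induction on the dimension $m$: the base case $m=1$ is the pigeonhole inequality~\eqref{eq.pigeonhole for segment}, and the inductive step slices $M$ into the $u_{m+1}$ thick slabs $R_j=\{x\in M: x_{m+1}\in[j-1,j]\}$, projects onto the first $m$ coordinates, notes that $\delta(C)>1$ forces the projected codes to retain minimum distance $>1$, and sums the inductive bound over the slabs. Your argument is a ``flattened'' version of this induction: rather than peeling off one coordinate at a time, you tile all $m$ directions at once and apply pigeonhole in a single step, which is somewhat cleaner and avoids the recursion entirely. The underlying geometric fact is identical --- at most one code point can sit in any axis-aligned unit box once $\delta(C)>1$ --- so the two routes are close in spirit even though the bookkeeping differs.

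One small correction to your side remark: for $m=1$, $u_1=3$, $\delta\to 1^+$, the volume bound $|C|\leq u_1/\delta+1$ already gives $|C|<4$, hence $|C|\leq 3$ by integrality; so that example does not actually exhibit the insufficiency of the volume bound. The gap genuinely appears only for $m\geq 2$: e.g.\ $m=2$, $u=(2,2)$ gives volume bound $|C|<9$ but the lemma claims $|C|\leq 4$. Your broader point --- that the volume bound does not deliver the integer product $\prod_i u_i$ and a pigeonhole refinement is needed --- is correct, just illustrated by the wrong example.
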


\begin{proof}
We induct on $m$.
For $m=1$, the argument in \eqref{eq.pigeonhole for segment} implies that $1<\delta(C)\leq\frac{u_1}{n-1}$, and rearranging gives $|C|=n<u_1+1$, i.e., $|C|\leq u_1$.
Now suppose the claim holds for some $m\geq1$ and select any $u\in\mathbb{N}^{m+1}$.
For each $j\in[u_{m+1}]$, consider the $j$th ``thick slice'' $R_j:=\{x\in M:x_{m+1}\in[j-1,j]\}$, and let $\pi$ denote projection onto the first $m$ coordinates.
Given $C\subseteq M$ with $\delta(C)>1$, then for every $j\in[u_{m+1}]$ and $x,y\in C\cap R_j$ with $x\neq y$, we have $\|x-y\|_\infty=\|\pi(x)-\pi(y)\|_\infty$.
That is, each code $\pi(C\cap R_j)$ has minimum distance $>1$.
By the induction hypothesis, it follows that
\[
|C|
\leq \sum_{j=1}^{u_{m+1}}|C\cap R_j|
=\sum_{j=1}^{u_{m+1}}|\pi(C\cap R_j)|
\leq \sum_{j=1}^{u_{m+1}}\prod_{i=1}^{m}u_i
=\prod_{i=1}^{m+1} u_i.
\qedhere
\]
\end{proof}

\begin{figure}
\begin{center}
\includegraphics[width=0.32\textwidth,trim={4cm 8cm 3cm 7cm},clip]{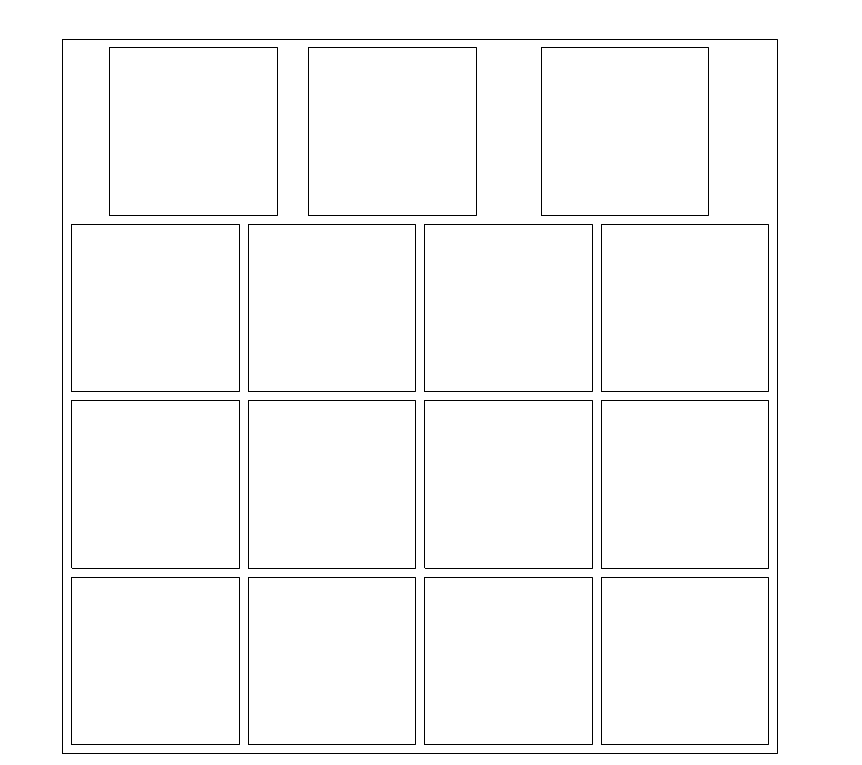}
\includegraphics[width=0.32\textwidth,trim={4cm 8cm 3cm 7cm},clip]{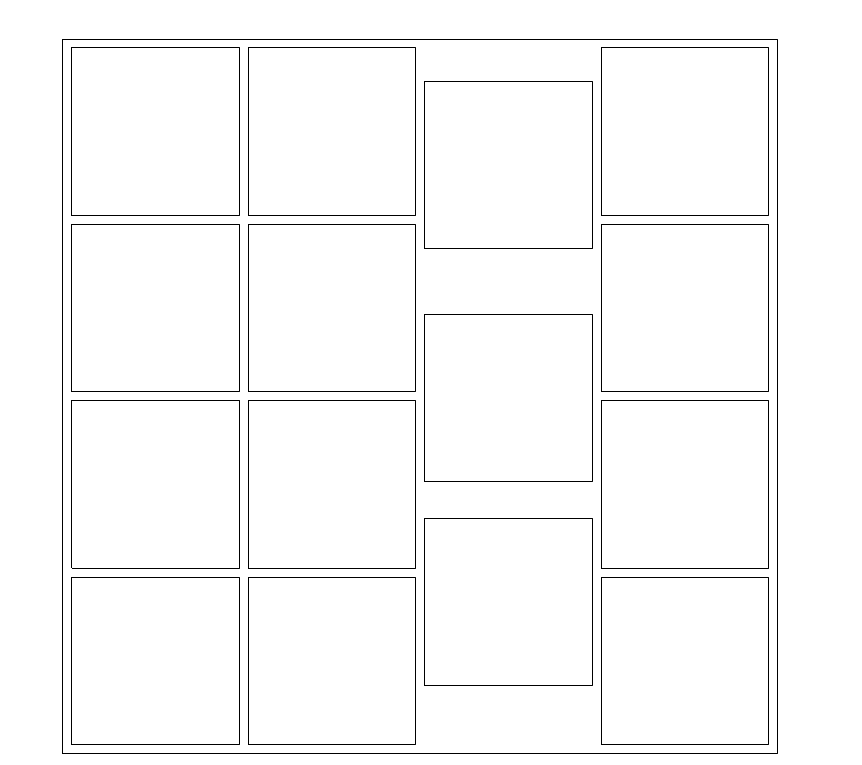}
\includegraphics[width=0.32\textwidth,trim={4cm 8cm 3cm 7cm},clip]{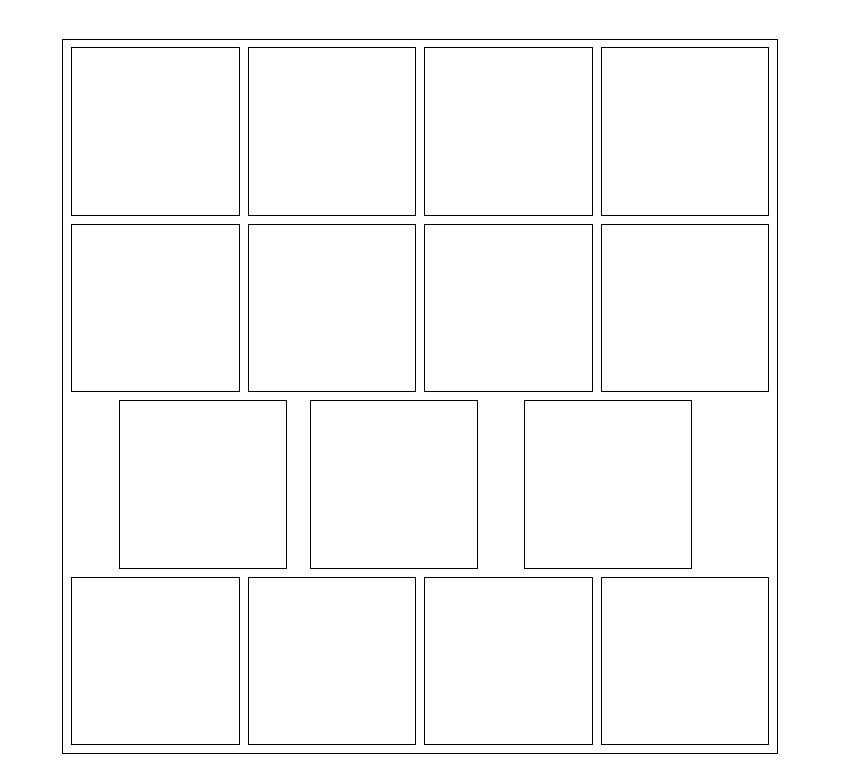}
\end{center}
\caption{\label{fig.puzzle15}
Generic points in the configuration space of Sam Loyd's 15 puzzle.
In each case, twelve of the squares are aligned with a lattice, while the other three are free to slide in their common row or column.
The centers of these squares form an optimal code under the $\infty$-norm in the $3\times3$ square that contains them.
This behavior is a special case of Theorem~\ref{thm.orthotope code minus 1}.
}
\end{figure}

\begin{theorem}
\label{thm.orthotope code minus 1}
Select $u\in(\mathbb{N}\cup\{0\})^m$ and put $M:=\{x\in \mathbb{R}^m:0\leq x\leq u\}$.
Every optimal code $C$ in $(M,\|\cdot\|_\infty)$ of size $\prod_{i=1}^m(u_i+1)-1$ enjoys a decomposition $C=A\sqcup B$ for which there exists $i\in[m]$ such that
\[
|A|=\prod_{s=1}^m(u_s+1)-(u_i+1),
\qquad
|B|=u_i,
\]
and furthermore, $A\subseteq M\cap\mathbb{Z}^m$ and $B\subseteq\{x+te_i:t\in\mathbb{R}\}$ for some $x\in\mathbb{Z}^m$.
\end{theorem}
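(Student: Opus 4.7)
The plan is to proceed by induction on $m$. The base case $m=1$ reduces to the interval analysis of Section~\ref{sec.interval} (taking $A = \varnothing$, $B = C$, and $i=1$). Before the inductive step, I would pin down $\delta(C) = 1$: removing any lattice point from $M \cap \mathbb{Z}^m$ yields a code of the right size and minimum distance $1$, giving $\delta(C) \geq 1$; and for $m \geq 2$, Lemma~\ref{lem.big codes have known min distance} rules out $\delta(C) > 1$ since $\prod_{s=1}^m(u_s+1) - 1 > \prod_{s=1}^m u_s$.

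For the inductive step in dimension $m \geq 2$, I would slice $M$ along the last coordinate into half-open slabs $T_k := \{x \in M : x_m \in [k-\tfrac{1}{2}, k+\tfrac{1}{2})\}$ for $k \in \{0,\ldots,u_m\}$. Any two points in the same slab have $m$-th coordinate difference strictly less than $1$, so the projection $\pi$ onto the first $m-1$ coordinates is injective on $C \cap T_k$ and yields a code in $M_{m-1} := [0,u_1] \times \cdots \times [0,u_{m-1}]$ with minimum distance at least $1$. Lemma~\ref{lem.unicorns in a orthotope} bounds $|\pi(C \cap T_k)| \leq \prod_{s<m}(u_s+1)$, and summing shows that exactly one \emph{deficient} slab $T_{k^*}$ satisfies $|C \cap T_{k^*}| = \prod_{s<m}(u_s+1) - 1$; Lemma~\ref{lem.unicorns in a jagged orthotope} then forces $\pi(C \cap T_k) = M_{m-1} \cap \mathbb{Z}^{m-1}$ for each of the remaining $u_m$ slabs.

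Applying the inductive hypothesis to $\pi(C \cap T_{k^*})$ yields a decomposition $A' \sqcup B'$ in $M_{m-1}$ with $B'$ on an axis-aligned line of direction $e_j$ for some $j \in [m-1]$. The central step is a cross-slab analysis: for every $p \in M_{m-1} \cap \mathbb{Z}^{m-1}$ whose column meets all $u_m+1$ slabs, the pairwise-spacing condition forces the values to be exactly $\{0, 1, \ldots, u_m\}$, so the entire column is integer. I would then split on whether $B' \subseteq \mathbb{Z}^{m-1}$. If $B'$ is entirely lattice, exactly one line lattice point $q^*$ fails to lie in $\pi(C \cap T_{k^*})$; its column contains $u_m$ unconstrained values forming $B$, while $A$ consists of the remaining (all-integer) code points, yielding the theorem with $i = m$. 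Otherwise, each non-lattice $b \in B'$ lies within $\infty$-distance $<1$ of any adjacent line lattice point, so cross-slab constraints force those columns to be integer and pin down the last coordinate $h'(b) = k^*$ for every $b \in B'$; then $B = \{(b, k^*) : b \in B'\}$ lies on the line through $(y, k^*) \in \mathbb{Z}^m$ in direction $e_j$ (where $y \in \mathbb{Z}^{m-1}$ is the lattice point defining the line in $M_{m-1}$), and $A$ consists of the remaining (all-integer) code points, yielding the theorem with $i = j \in [m-1]$.

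The main obstacle I anticipate is the cross-slab bookkeeping in the partially-lattice subcase: one must verify that every line lattice point missing from $\pi(C \cap T_{k^*})$ is close enough to some non-lattice $b \in B'$ to have its column forced integer. This should follow from the observation that $u_j$ points on a length-$u_j$ segment with pairwise spacing $\geq 1$ either exhaust the lattice (reverting to the all-lattice subcase) or have every missing line lattice point sandwiched adjacent to a non-lattice member of $B'$.
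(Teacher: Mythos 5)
Your proposal is correct in outline but takes a genuinely different inductive route from the paper. The paper inducts on $\sum_i u_i$, works with measure-zero ``thin slices'' $S_j=\{x:x_i=j\}$ together with width-one ``thick slices,'' and splits into two cases: if some $S_j$ with $j\in[u_i]$ is full, remove it and recurse on the smaller $m$-dimensional orthotope with $u_i'=u_i-1$; otherwise a pigeonhole over thick slices shows each open region $\{x:x_i\in(\ell-1,\ell)\}$ holds exactly one code point, and these form $B$. You instead induct on the dimension $m$, partition $M$ into $u_m+1$ half-open slabs of width (at most) one, observe there is a unique deficient slab, and apply the inductive hypothesis to its \emph{projection} down to the $(m-1)$-dimensional orthotope. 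Your route buys a uniform treatment (the unique-deficient-slab observation replaces the paper's Case~I/Case~II dichotomy), at the cost of the cross-slab bookkeeping you flag: you must show that when $B'$ has non-lattice members, the set $E$ of lattice points of $M_{m-1}$ missing from the deficient projection satisfies $|E|=1+|B'\setminus\mathbb{Z}^{m-1}|$ (forced by $|A'|$ and $|B'\cap\mathbb{Z}^{m-1}|$), that $E$ lies on the line $L'$ with each point adjacent to some non-lattice $b\in B'$, and that pairing the $u_m$ column points over such a lattice point with the lift of $b$ and applying the pigeonhole bound in $[0,u_m]$ forces the column onto $\{0,\ldots,u_m\}$ and pins the lift to height $k^*$; a parallel check handles the all-lattice case with $i=m$. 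Your anticipated ``sandwiching'' observation is exactly the right mechanism, and the counting works out (the forced consecutivity of the non-lattice $B'$-indices is what makes $|E\cap L'|=|E|$), so the proposal is sound; it would just need that bookkeeping, and the handling of degenerate coordinates $u_s=0$ (which the paper's $\sum u_i$-induction sidesteps), to be spelled out fully.
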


\begin{proof}
We induct on $\sum_{i=1}^m u_i$.
For $\sum_{i=1}^m u_i=1$, we have $M=[0,1]$ up to isometry, and the codes of size $\prod_{i=1}^m(u_i+1)-1=1$ take the claimed form with $A=\varnothing$.
Now fix $r\in\mathbb{N}$, suppose our claim holds for all $M'$ such that $\sum_{i=1}^m u_i=r$, and consider any $M$ with $\sum_{i=1}^m u_i=r+1$.
By Lemmas~\ref{lem.unicorns in a orthotope} and~\ref{lem.big codes have known min distance} and the monotonicity of the maximum minimum distance, we have $\delta(C)=1$.
Select $i\in[m]$ such that $u_i\geq1$, and for each $j\in\{0,\ldots,u_i\}$, consider the ``thin slice'' $S_{j}:=\{x\in M:x_i=j\}$.

\textbf{Case I:}
There exists $j\in[u_i]$ such that $|C\cap S_{j}|\geq\prod_{s\neq i}(u_s+1)$.
Then Lemma~\ref{lem.unicorns in a orthotope} implies that $C\cap S_{j}=S_{j}\cap\mathbb{Z}^d$.
This in turn implies
\[
\operatorname{cl}\bigg(\bigsqcup_{x\in C\cap S_{j}}B(x,\tfrac{1}{2})\bigg)
=\operatorname{cl}\Big(S_{j}+B(0,\tfrac{1}{2})\Big),
\]
meaning every $x\in C\setminus S_{j}$ resides in $M\setminus(S_{j}+B(0,1))$.
As such, we may identify 
\[
\{x\in M:x_i\in[0,j-\tfrac{1}{2})\cup[j+\tfrac{1}{2},u_i]\}
\]
with the smaller space $M':=\{x\in \mathbb{R}^m:0\leq x\leq u'\}$, where $u'$ equals $u$ in every coordinate except $u'_i=u_i-1$.
The result then follows from our induction hypothesis, since every optimal code in $M'$ takes the form $A\sqcup B$, and so the corresponding optimal codes in $M$ are obtained by contributing $S_j\cap\mathbb{Z}^d$ to $A$.

\textbf{Case II:}
For every $j\in[u_i]$, it holds that $|C\cap S_{j}|<\prod_{s\neq i}(u_s+1)$.
Consider the ``thick slices'' defined by
\[
R_{j}(\ell):=\left\{\begin{array}{cl}
\{x\in M:x_i\in[\ell-1,\ell)\} &\text{if }\ell\leq j\\
\{x\in M:x_i\in(\ell-1,\ell]\} &\text{if }\ell> j.
\end{array}\right.
\]
For each $j\in\{0,\ldots,u_i\}$, this gives a partition
\[
M
=S_j\sqcup R_{j}(1)\sqcup\cdots\sqcup R_{j}(u_i).
\]
Let $\pi$ denote projection onto the coordinates indexed by $[m]\setminus\{i\}$.
Since $\delta(C)=1$ and the slice $R_{j}(\ell)$ is half-open, it holds that every $x,y\in C\cap R_{j}(\ell)$ with $x\neq y$ necessarily satisfies $\|x-y\|_\infty=\|\pi(x)-\pi(y)\|_\infty$.
That is, $\pi(C\cap R_{j}(\ell))$ has minimum distance $\geq1$, and so $|C\cap R_{j}(\ell)|\leq \prod_{s\neq i}(u_s+1)$ as a consequence of Lemma~\ref{lem.unicorns in a orthotope}.
This bound holds for all $\ell$, whereas $|C\cap S_{j}|\leq\prod_{s\neq i}(u_s+1)-1$.
We apply pigeonhole over the $j$th partition of $M$ to conclude that $|C\cap R_{j}(\ell)|= \prod_{s\neq i}(u_s+1)$ for every $\ell$ and $|C\cap S_{j}|=\prod_{s\neq i}(u_s+1)-1$.

Next, take $R_\ell:=\{x\in M:x_i\in(\ell-1,\ell)\}$ and observe that for every $j$ and $\ell$, there exists $j'$ such that $R_j(\ell)=R_\ell\sqcup S_{j'}$.
It follows that $|C\cap R_{\ell}|=|C\cap R_{j}(\ell)|-|C\cap S_{j'}|=1$ for each $\ell\in[u_i]$.
Let $x^{(\ell)}$ denote the unique member of $C\cap R_\ell$.
We will take
\[
B:=\{x^{(\ell)}:\ell\in[u_i]\},
\qquad
A:=C\setminus B.
\]
For every $y\in S_{\ell-1}\cup S_\ell$, it holds that $\|x^{(\ell)}-y\|_\infty=\|\pi(x^{(\ell)})-\pi(y)\|_\infty$, and so $\pi((C\cap S_{\ell-1})\cup\{x^{(\ell)}\})$ and $\pi((C\cap S_{\ell})\cup\{x^{(\ell)}\})$ both have minimum distance $\geq1$.
Lemma~\ref{lem.unicorns in a orthotope} then implies that both codes equal $\pi(M)\cap\mathbb{Z}^{m-1}$.
Comparing neighboring thick slices then gives
\begin{align*}
\pi(x^{(\ell)})
&=\pi((C\cap S_{\ell})\cup\{x^{(\ell)}\})\setminus\pi(C\cap S_\ell)\\
&=\pi((C\cap S_{\ell})\cup\{x^{(\ell+1)}\})\setminus\pi(C\cap S_\ell)
=\pi(x^{(\ell+1)})
\end{align*}
for every $\ell\in[u_i-1]$.
That is, $\pi(x^{(\ell)})=\pi(x^{(\ell')})$ for every $\ell,\ell'\in[u_i]$.
The claim follows.
\end{proof}

Theorem~\ref{thm.orthotope code minus 1} implies that Conjecture~\ref{conj.2} also holds for orthotopes $(M,\|\cdot\|_\infty)$.
Indeed, if $M$ is $1$-dimensional, then $M$ is isometric to an interval, which was treated in Subsection~\ref{sec.interval}.
Otherwise, Theorem~\ref{thm.orthotope code minus 1} describes how every optimal code $C$ of size $n-1$ has a decomposition $C=A\sqcup B$ where $B$ has size $u_i$ and resides in an affine line parallel to the span of some $e_i$.
For this choice of $i\in[m]$, $A$ is invariant to the coordinate reflection $x_i\mapsto u_i-x_i$.
Furthermore, $A$ has size $\prod_{s=1}^m(u_s+1)-(u_i+1)$, which is positive since $u$ has at least $2$ positive entries.
For comparison, the symmetry strength of $M$ is $0$ since a generic member of $M$ is not fixed by any member of $M$'s discrete isomorphism group.

\subsection{Tori}

\subsubsection{Candidate unicorn spaces}

Given a lattice $L$ in $\mathbb{R}^m$, consider the space $M:=\mathbb{R}^m/L$ with metric
\[
d(x,y):=\min\{\|s-t\|_2:s\in x,t\in y\}.
\]
The isometry group of $M$ is $\operatorname{Aut}(L)\ltimes(\mathbb{R}^m/L)$, where $\operatorname{Aut}(L)$ denotes the subgroup of $\operatorname{O}(m)$ that preserves $L$.
We are particularly interested in lattices $L$ that give the unique (up to isometry) densest periodic packing of spheres of some fixed radius in $\mathbb{R}^m$.
This occurs in dimensions $m\in\{1,2,8,24\}$ (see~\cite{Toth:40,Viazovska:17,CohnKMRV:17}) and is conjectured to occur for $m=4$ as well~\cite{Musin:18}.

Following~\cite{ConwayS:13}, we define $A_n\subseteq\operatorname{span}\{1_{n+1}\}^\perp\cong\mathbb{R}^n$ to be the points in $\mathbb{Z}^{n+1}$ whose coordinates sum to zero, define $D_n\subseteq\mathbb{R}^n$ to be the points in $\mathbb{Z}^n$ whose coordinates sum to an even number, define $E_8\subseteq\mathbb{R}^8$ to be points in $\mathbb{Z}^8\cup(\mathbb{Z}+\frac{1}{2})^8$ whose coordinates sum to an even number, and define $\Lambda_{24}\subseteq\mathbb{R}^{24}$ to be the lattice generated by all vectors of the form $\frac{1}{\sqrt{8}}(\mp 3,(\pm1)_{23})$, where the $\mp3$ may be in any position, and the upper signs are taken on the support of a Golay codeword.
Then $A_1$, $A_2$, $E_8$ and $\Lambda_{24}$ give the only known optimal sphere packings in Euclidean spaces, while $D_4$ gives the putatively optimal sphere packing for $\mathbb{R}^4$.
In all such cases, we can use (putatively) optimal sphere packings to produce (putatively) optimal codes in $\mathbb{R}^m/L$.

Throughout this section, we use the following notation:
Given a lattice $L\subseteq\mathbb{R}^m$, we let $\ell(L)$ denote the smallest $\|v\|_2$ over all nonzero $v\in L$, and we let $\overline{L}$ denote the normalized lattice $\ell(L)^{-1}\cdot L$.

\subsubsection{Lattice codes and holy codes}

Select any $T$ in the conformal orthogonal group
\[
\operatorname{CO}(m)
:=\{cQ:c>0,Q\in\operatorname{O}(m)\}
\]
such that $TL$ contains $L$ as a sublattice (e.g., $T=\tfrac{1}{k}I$ for any $k\in\mathbb{N}$).
For any such $T$, we refer to $(TL)/L$ as a \textbf{lattice code} in $\mathbb{R}^m/L$.

\begin{lemma}
\label{eq.lattice codes opt}
Suppose $L\subseteq\mathbb{R}^m$ is a lattice that gives the unique (up to isometry) densest periodic packing of spheres of some fixed radius in $\mathbb{R}^m$.
Then
\begin{itemize}
\item[(a)]
every lattice code in $\mathbb{R}^m/L$ is optimal, and
\item[(b)]
if there exists a lattice code of size $n$ in $\mathbb{R}^m/L$, then every optimal code of size $n$ in $\mathbb{R}^m/L$ is a lattice code up to isometry.
\end{itemize}
\end{lemma}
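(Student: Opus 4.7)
The plan is to work upstairs in $\mathbb{R}^m$ via the preimage of a code under $\pi\colon \mathbb{R}^m\to\mathbb{R}^m/L$, and reduce both (a) and (b) to the hypothesis that $L$ is the unique densest periodic packing. To set the stage, I would write $T=cQ$ with $c>0$ and $Q\in\operatorname{O}(m)$, note that $L\subseteq TL$ yields $n:=[TL:L]=|\det T|^{-1}=c^{-m}$, and record that conformality gives $\ell(TL)=c\,\ell(L)$. Whenever $n\geq 2$ one has $c<1$, so the shortest nonzero vector of $TL$ is strictly shorter than every nonzero vector of $L$, hence lies outside $L$, and therefore realizes the minimum distance of the lattice code $(TL)/L$ as $\ell(TL)=\ell(L)\,n^{-1/m}$.

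For (a), I would let $C$ be an arbitrary code of size $n\geq 2$ in $\mathbb{R}^m/L$ with minimum distance $\delta$ and examine $X:=\pi^{-1}(C)$. Since $X$ is a union of $n$ cosets of $L$, pairs within a coset are at Euclidean distance $\geq\ell(L)$ and pairs in distinct cosets are at distance $\geq\delta$, so the minimum distance of $X$ is exactly $\min(\ell(L),\delta)$. Centering open Euclidean balls of radius $\tfrac{1}{2}\min(\ell(L),\delta)$ at the points of $X$ then gives a periodic sphere packing of density
\[
\frac{n\,V_m\,\min(\ell(L),\delta)^m}{2^m\operatorname{covol}(L)},
\]
where $V_m$ is the volume of the Euclidean unit ball. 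The hypothesis that $L$ furnishes the densest periodic packing caps this above by $V_m\,\ell(L)^m/(2^m\operatorname{covol}(L))$, yielding $\min(\ell(L),\delta)\leq\ell(L)\,n^{-1/m}$. For $n\geq 2$ the right-hand side is strictly less than $\ell(L)$, so the bound collapses to $\delta\leq\ell(L)\,n^{-1/m}$, which matches the value achieved by every lattice code and proves (a).

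For (b), I would work in the equality case. When $C$ is optimal of size $n$ we have $\delta=\ell(L)\,n^{-1/m}$, so $X$ saturates the density inequality and realizes a densest periodic packing of $\mathbb{R}^m$. The uniqueness clause in the hypothesis then furnishes $c_0>0$, $Q_0\in\operatorname{O}(m)$, and $v\in\mathbb{R}^m$ with $X=c_0Q_0L+v$. Setting $T':=c_0Q_0\in\operatorname{CO}(m)$, the $L$-periodicity $X+L=X$ forces $L\subseteq T'L$, so $(T'L)/L$ is a legitimate lattice code and $C$ is its translate by $v+L$. Since translation descends to an isometry of $\mathbb{R}^m/L$, this exhibits $C$ as a lattice code up to isometry.

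The only delicate point I anticipate is interpreting ``unique up to isometry'' in the hypothesis strongly enough for (b): the argument needs that the point configuration of the densest packing, not merely its density, is unique modulo rigid motions of $\mathbb{R}^m$. This is the standard reading and is what the cited characterizations of $A_1$, $A_2$, $E_8$, and $\Lambda_{24}$ actually deliver, so once this is made explicit the remainder of the argument is a short density bookkeeping on either side of $\pi$.
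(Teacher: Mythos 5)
Your proposal is correct and takes essentially the same approach as the paper: both arguments pass from a code $C$ in $\mathbb{R}^m/L$ to the associated $L$-periodic point set in $\mathbb{R}^m$, compare its packing density to that of $L$ itself, and for (b) invoke the up-to-isometry uniqueness of the densest periodic packing to recover the lattice-code structure. Your version is slightly more explicit about the $\min(\ell(L),\delta)$ clause and about unwinding part (b), but there is no substantive difference in method.
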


\begin{proof}
Suppose there is a lattice code of size $n$, and consider any code $C=\{s+L:s\in S\}$ of size $n$, where $S$ is a set of coset representatives.
Then $P(C):=\frac{\ell(L)}{\delta(C)}\bigcup_{s\in S}(s+L)$ gives a periodic packing of spheres of radius $\ell(L)/2$ with density matching that of the periodic packing $\bigcup_{s\in S}(s+L)$ of spheres of radius $\delta(C)/2$:
\[
\frac{n\cdot V_m\cdot (\delta(C)/2)^m}{\operatorname{covolume}(L)},
\]
where $V_m$ denotes the volume of a ball in $\mathbb{R}^m$ of radius $1$, and $\operatorname{covolume}(L)$ refers to the volume of a fundamental domain of $L$.
Observe that the density of $P(C)$ increases as $\delta(C)$ increases.
Taking $T=cQ\in\operatorname{CO}(m)$ such that $TL\geq L$ and $|TL:L|=n$ then gives a lattice code $C=(TL)/L$ of size $n$ such that $\delta(C)=c\ell(L)$, in which case $P(C)=QL$.
The optimality of $QL$ as a sphere packing then implies the optimality of $C$ as a code.
This gives (a), whereas (b) follows from the hypothesis that $L$ is unique up to isometry as the densest periodic packing of spheres.
\end{proof}

We seek a unicorn sequence of lattice codes, and so we need to determine when lattice codes are unique up to isometry.
Perhaps surprisingly, it is possible to have inequivalent lattice codes of the same size; see Figure~\ref{fig.lattice} for an example where $L=A_2$ and $n=49$.
In pursuit of uniqueness, we start with a nonexistence result:

\begin{figure}
\begin{center}
\includegraphics[width=0.45\textwidth,trim={4cm 10cm 3cm 9cm},clip]{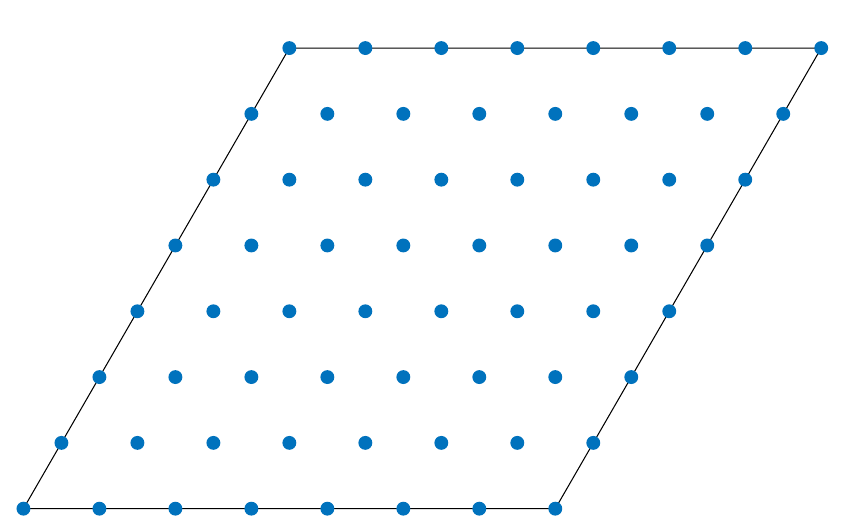}
\includegraphics[width=0.45\textwidth,trim={4cm 10cm 3cm 9cm},clip]{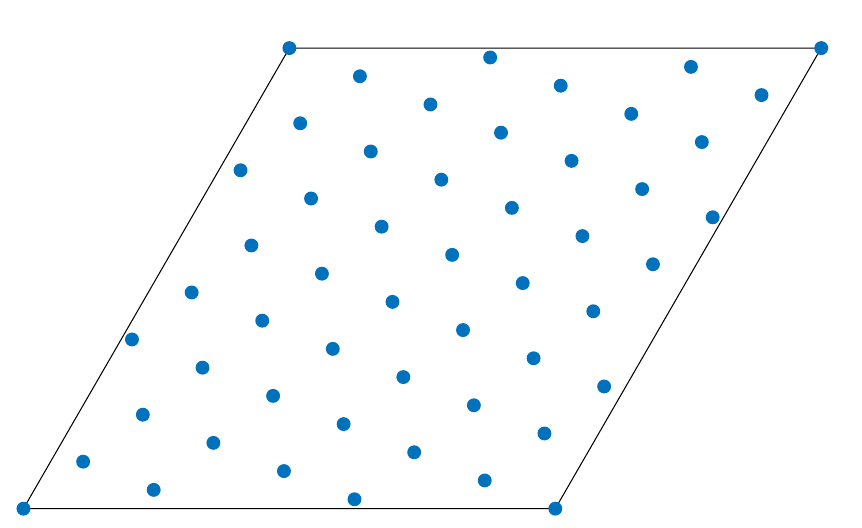}
\end{center}
\caption{\label{fig.lattice}
Inequivalent optimal codes of size $49$ in $\mathbb{R}^2/A_2$; i.e., there is no isometry of $\mathbb{R}^2/A_2$ that maps one code to the other.
Note that the four vertices on the corners of the fundamental domain are identified as the same point; similarly, in the left-hand plot, we identify the appropriate pairs of points on parallel edges.
}
\end{figure}

\begin{lemma}
For any lattice $L\subseteq\mathbb{R}^m$, if $C$ is a lattice code in $\mathbb{R}^m/L$, then
\[
|C|\in N(L):=\{\|z\|_2^m:z\in\overline{L}\setminus\{0\}\}.
\]
\end{lemma}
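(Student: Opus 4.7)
The plan is to exploit the containment $TL \supseteq L$ in two ways: once to express the index $|C| = |TL:L|$ as a power of the conformal factor, and once to realize that factor as the length of a lattice vector.

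Write $T = cQ$ with $c > 0$ and $Q \in \operatorname{O}(m)$. Since $Q$ is an isometry and $c > 0$, the map $T$ is invertible with $|\det T| = c^m$. The inclusion $TL \supseteq L$ is therefore equivalent to $T^{-1} L \subseteq L$. From the standard covolume formula $\operatorname{covolume}(TL) = |\det T|\cdot\operatorname{covolume}(L) = c^{m}\operatorname{covolume}(L)$, we obtain
\[
|C| = |TL : L| = \frac{\operatorname{covolume}(L)}{\operatorname{covolume}(TL)} = c^{-m}.
\]
(In particular, $c \leq 1$, as expected for a refinement.)

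It remains to realize $c^{-1}$ as the Euclidean norm of some nonzero element of $\overline{L}$. For this, I would pick a shortest nonzero vector $v \in L$, so that $\|v\|_2 = \ell(L)$. Then $w := T^{-1} v$ lies in $L$ (by $T^{-1} L \subseteq L$), is nonzero (by invertibility of $T$), and satisfies $\|w\|_2 = c^{-1}\|v\|_2 = c^{-1}\ell(L)$, since $Q^{-1}$ preserves norms. Setting $z := w/\ell(L) \in \overline{L}\setminus\{0\}$ gives $\|z\|_2 = c^{-1}$, and hence
\[
|C| = c^{-m} = \|z\|_2^{m} \in N(L),
\]
as claimed.

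There is no real obstacle: the argument is essentially a bookkeeping exercise combining the multiplicativity of the determinant with the fact that $Q \in \operatorname{O}(m)$ preserves the lattice's length spectrum. The only point worth stating carefully is the equivalence $TL \supseteq L \iff T^{-1} L \subseteq L$, which is what lets us transport the shortest vector $v$ to a vector of length $c^{-1} \ell(L)$ that still lives in $L$.
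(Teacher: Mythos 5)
Your argument is correct and follows essentially the same route as the paper's proof: compute $|C|=|TL:L|=c^{-m}$ via covolumes, transport a shortest vector $v\in L$ to $T^{-1}v\in L$ using $T^{-1}L\subseteq L$, and normalize to land in $\overline{L}$ with norm $c^{-1}$. The paper's vector $u:=T^{-1}v$ and its final expression $(\ell^{-1}\|u\|_2)^m$ are precisely your $w$ and $\|z\|_2^m$ after the rescaling $z=w/\ell(L)$.
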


\begin{proof}
Given a lattice code $C$, there exists $T=cQ\in\operatorname{CO}(m)$ such that $TL\geq L$ and $C=(TL)/L$.
Select any nonzero $v\in L$ of norm $\ell:=\ell(L)$ and put $u:=T^{-1}v$.
As such, $\ell=\|v\|_2=\|Tu\|_2=\|cQu\|_2=c\|u\|_2$, and so
\[
|C|
=|TL:L|
=\frac{\operatorname{covolume}(L)}{\operatorname{covolume}(TL)}
=|\operatorname{det}(T^{-1})|
=c^{-m}
=(\ell^{-1}\|u\|_2)^m
\in N(L).
\qedhere
\]
\end{proof}

Taking inspiration from~\cite{ConnellyD:14}, lattice codes of these sizes are easy to find in many cases:

\begin{lemma}
\label{lem.dilated lattices}
Select $L\in\{A_1,A_2,D_4,E_8\}$, put $m:=\operatorname{dim}L$.
For every $n\in N(L)$, there exists a lattice code in $\mathbb{R}^m/L$ of size $n$.
\end{lemma}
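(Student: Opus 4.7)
The plan is to exploit the fact that each of the four lattices is, after the rescaling to $\overline{L}$, isometric to the ring of integers in a real normed division algebra of the matching dimension: $\overline{A_1}=\mathbb{Z}\subset\mathbb{R}$, $\overline{A_2}\cong\mathbb{Z}[\omega]\subset\mathbb{C}$, $\overline{D_4}\cong\mathcal{H}\subset\mathbb{H}$ (Hurwitz quaternions), and $\overline{E_8}\cong\mathcal{O}\subset\mathbb{O}$ (Coxeter--Dickson octavians). In each of these ambient algebras, the algebra norm restricts to the Euclidean norm on $\mathbb{R}^m$ and is multiplicative: $|zw|_2=|z|_2\cdot|w|_2$. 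These identifications are classical and amount to comparing Gram matrices; the details would only be spelled out for $D_4$ and $E_8$.

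The core construction is to realize $T^{-1}$ as left multiplication by an element of $\overline{L}$. Fix $n\in N(L)$ and choose $z\in\overline{L}\setminus\{0\}$ with $\|z\|_2^m=n$. Define $T^{-1}\colon x\mapsto zx$. Even in the nonassociative octonion case, this map is $\mathbb{R}$-linear because only the left factor is fixed. Multiplicativity of the norm gives $|T^{-1}x|_2=|z|_2|x|_2$, so $T^{-1}=|z|_2 Q$ for some $Q\in\operatorname{O}(m)$, whence $T=|z|_2^{-1}Q^{-1}\in\operatorname{CO}(m)$. Closure of the integer ring under multiplication gives $T^{-1}\overline{L}=z\overline{L}\subseteq\overline{L}$, equivalently $\overline{L}\subseteq T\overline{L}$. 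Therefore $(T\overline{L})/\overline{L}$ is a lattice code in $\mathbb{R}^m/\overline{L}$ whose cardinality is
\[
|T\overline{L}:\overline{L}|
=\frac{\operatorname{covolume}(\overline{L})}{\operatorname{covolume}(T\overline{L})}
=|\det T|^{-1}
=|z|_2^m
=\|z\|_2^m
=n.
\]

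Finally, the similarity $x\mapsto\ell(L)\cdot x$ identifies $\mathbb{R}^m/\overline{L}$ with $\mathbb{R}^m/L$ (rescaling all distances by the factor $\ell(L)$), and it commutes with the conformal map $T$. The image of the lattice code $(T\overline{L})/\overline{L}$ under this similarity is precisely $(TL)/L$, which is therefore a lattice code of size $n$ in $\mathbb{R}^m/L$, as required.

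The main obstacle is the $E_8$ case: since $\mathbb{O}$ is nonassociative, $\mathcal{O}$ is not a ring in the usual sense, and one must be careful that the argument above only uses (i) $\mathbb{R}$-linearity of $L_z\colon x\mapsto zx$ for a fixed $z$, (ii) multiplicativity of the octonion norm, and (iii) closure of $\mathcal{O}$ under multiplication. All three are standard properties of the octavian lattice, so no associativity is actually invoked; the remaining verifications for $A_1,A_2,D_4$ are routine consequences of the corresponding associative number systems.
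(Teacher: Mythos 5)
Your proof is correct and follows essentially the same route as the paper: identify $\overline{L}$ with the ring of integers in a normed division algebra, use left multiplication by a fixed $z\in\overline{L}$ of norm $n^{1/m}$ to build $T\in\operatorname{CO}(m)$ with $TL\supseteq L$, and compute the index via the determinant. Your extra care about nonassociativity (only $\mathbb{R}$-linearity of left multiplication, norm multiplicativity, and closure of $\mathcal{O}$ are used) and the explicit rescaling between $\overline{L}$ and $L$ are welcome clarifications but do not constitute a different argument.
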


\begin{proof}
Each of these lattices exhibits hidden multiplicative structure.
Specifically, $\overline{A}_1$ is isometric to the integers in $\mathbb{R}$, $\overline{A}_2$ is isometric to the Eisenstein integers in $\mathbb{C}$, $\overline{D}_4$ is isometric to the Hurwitz integral quaternions in $\mathbb{H}$, and $\overline{E}_8$ is isometric to the Coxeter--Dickson integral octonions in $\mathbb{O}$; see~\cite{ConwayS:03} for details.
This identification of $\mathbb{R}^m$ with a Euclidean Hurwitz algebra with subring $\overline{L}$ endows $\mathbb{R}^m$ with a multiplication $(x,y)\mapsto xy$ such that
\begin{itemize}
\item[(i)]
$\|xy\|_2=\|x\|_2\|y\|_2$ for every $x,y\in\mathbb{R}^m$, and
\item[(ii)]
$xy\in \overline{L}$ for every $x,y\in \overline{L}$.
\end{itemize}
Select any nonzero $z\in \overline{L}$.
Then (i) implies that the map $[z]\colon x\mapsto zx$ takes the form $[z]=\|z\|_2Q$ for some $Q\in\operatorname{O}(m)$, and by (ii), we further have that the image $L'$ of $L$ under $[z]$ is a subset of $L$.
Put $T=[z]^{-1}$.
Then $L=TL'$ is a sublattice of $TL$, and furthermore,
\[
|TL:L|
=\frac{\operatorname{covolume}(L)}{\operatorname{covolume}(TL)}
=|\operatorname{det}[z]|
=\|z\|_2^m.
\qedhere
\]
\end{proof}

Notice that $N(A_1)=\mathbb{N}$, which matches our analysis of the circle.
Next, the set $N(A_2)=\{1, 3, 4, 7, 9, 12, 13,\ldots\}$ is known as the Loeschian numbers (OEIS A003136).
(This sequence of code sizes was derived using a related approach in~\cite{ConnellyD:14}.)
Observe that for every nonzero $z\in \overline{D}_4$, it holds that $\|z\|_2^2\in\mathbb{N}$.
Also, by virtue of its isometry with the Hurwitz integral quaternions, $\overline{D}_4$ can be rotated to contain $\mathbb{Z}^4$.
Lagrange's four square theorem then implies $N(D_4)=\{k^2:k\in\mathbb{N}\}$.
Next, $\|z\|_2^2\in\mathbb{N}$ for every nonzero $z\in\overline{E}_8$, and furthermore, the theta function for $E_8$ gives
\[
|\{z\in\overline{E}_8:\|z\|_2^2=k\}|
=240\sum_{d|k}d^3
>0
\]
for each $k\in\mathbb{N}$; see p.~122 of~\cite{ConwayS:13}.
As such, $N(E_8)=\{k^4:k\in\mathbb{N}\}$ in this case.
Similarly, the theta function for $\Lambda_{24}$ gives that $N(\Lambda_{24})=\{k^{12}:k\in\mathbb{N}\}$.
We suspect that Lemma~\ref{lem.dilated lattices} also holds for $L=\Lambda_{24}$, but a proof requires a different idea since, by Hurwitz's theorem, $\{\mathbb{R},\mathbb{C},\mathbb{H},\mathbb{O}\}$ accounts for all Euclidean Hurwitz algebras. 
In Example~3.2(d) of~\cite{BarnesS:83}, Barnes and Sloane give $T\in\operatorname{CO}(24)$ such that $T\Lambda_{24}\geq \Lambda_{24}$ and $|T\Lambda_{24}: \Lambda_{24}|=2^{12}$, i.e., $(T\Lambda_{24})/\Lambda_{24}$ is a lattice code of size $2^{12}$.

Recall that the covering radius $R$ of a lattice $L\subseteq\mathbb{R}^m$ is the supremum of all $r$ for which $\mathbb{R}^m\setminus\bigcup_{x\in L}B(x,r)$ is nonempty, and a point $z\in\mathbb{R}^m$ is known as a \textbf{deep hole} of $L$ if $\min\{\|z-x\|_2:x\in L\}=R$; see~\cite{ConwayS:13}, for example.
Consider codes $\{s+L:s\in S\}$ such that for each $s,t\in S$ with $s\neq t$, it holds that $s-t$ is a deep hole of $L$.
We refer to such codes as \textbf{holy codes}.

\begin{lemma}
\label{lem.holy characterization}
Suppose $L\subseteq\mathbb{R}^m$ is a lattice with covering radius $R$, and let $C\subseteq\mathbb{R}^m/L$ be finite.
The following are equivalent:
\begin{itemize}
\item[(i)]
$C$ is holy.
\item[(ii)]
For every $x,y\in C$ with $x\neq y$, it holds that $d(x,y)=R$.
\item[(iii)]
$\min\{d(x,y):x,y\in C,x\neq y\}=R$.
\end{itemize}
Furthermore,
\begin{itemize}
\item[(a)]
every holy code in $\mathbb{R}^m/L$ is optimal, and
\item[(b)]
if there exists a holy code of size $n$ in $\mathbb{R}^m/L$, then every optimal code of size $n$ in $\mathbb{R}^m/L$ is a holy code.
\end{itemize}
\end{lemma}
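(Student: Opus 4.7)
The plan hinges on one universal upper bound: for any two points $x, y \in \mathbb{R}^m/L$ with representatives $s_0, t_0$, the definition of $d$ gives $d(x,y) = \min_{\ell \in L}\|s_0 - t_0 - \ell\|_2$, which is precisely the Euclidean distance from $s_0 - t_0$ to the nearest lattice point. By the definition of the covering radius, this quantity is always at most $R$, with equality exactly when $s_0 - t_0$ is a deep hole. I would state this observation first, as everything flows from it.

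Given that, the three equivalences are essentially tautological. For (i)$\Rightarrow$(ii), if $C = \{s+L : s \in S\}$ is holy, then for distinct $x = s+L$ and $y = t+L$ in $C$ the difference $s-t$ is a deep hole, so $d(x,y) = R$. Implication (ii)$\Rightarrow$(iii) is immediate. For (iii)$\Rightarrow$(ii), the universal bound $d(x,y)\le R$ forces every pairwise distance to lie in $[R,R]$ once the minimum is $R$. Finally (ii)$\Rightarrow$(i) follows by reading the deep-hole characterization backwards: $d(x,y)=R$ means some (hence every) representative of $x-y$ realizes distance $R$ from $L$, i.e.\ is a deep hole.

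Part (a) is then a one-line consequence: a holy code has minimum distance $R$ by (ii), but no code in $\mathbb{R}^m/L$ can have minimum distance exceeding $R$, so a holy code is optimal among codes of its size. For part (b), suppose a holy code of size $n$ exists; it witnesses that the maximum minimum distance among size-$n$ codes is at least $R$, and hence (by the universal bound) equals $R$. Any optimal size-$n$ code $C$ therefore has $\delta(C) = R$, so by (iii)$\Rightarrow$(i) it is holy.

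There is no real obstacle here; the only subtlety worth flagging carefully in the write-up is the universal bound $d(x,y)\le R$, since it is the single fact that turns a statement about the \emph{minimum} pairwise distance into a statement about \emph{every} pairwise distance, thereby collapsing (iii) to (ii) and forcing optimal codes to be holy in part (b).
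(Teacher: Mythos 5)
Your proposal is correct and follows essentially the same route as the paper: both hinge on the observation that $d(s+L,t+L)=\min\{\|(s-t)-w\|_2:w\in L\}\leq R$ with equality exactly when $s-t$ is a deep hole, and then read off (i)$\Leftrightarrow$(ii)$\Leftrightarrow$(iii), (a), and (b) directly from it. The only cosmetic difference is that you deduce (a) straight from the universal bound $\delta(C)\leq R$, while the paper first treats size-$2$ codes and invokes monotonicity of the maximum minimum distance; the content is the same.
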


\begin{proof}
First, we observe that for every $s,t\in\mathbb{R}^m$, it holds that
\begin{align*}
d(s+L,t+L)
&=\min\{\|(s+u)-(t+v)\|_2:u,v\in L\}\\
&=\min\{\|(s-t)-w\|_2:w\in L\}\\
&\leq R,
\end{align*}
where the last step applies the definition of covering radius.
Furthermore, equality occurs precisely when $s-t$ is a deep hole.
We will repeatedly use both of these facts.

For (i)$\Rightarrow$(ii), suppose $C=\{s+L:s\in S\}$ is holy.
Then for every $s,t\in S$ with $s\neq t$, it holds that $s-t$ is a deep hole, and so $d(s+L,t+L)=R$ by our intermediate result.
Next, (ii)$\Rightarrow$(iii) is immediate.
For (iii)$\Rightarrow$(i), take any $s,t\in S$ with $s\neq t$.
Then (iii) and our intermediate result together give $R\leq d(s+L,t+L)\leq R$, in which case the second part of our intermediate result gives that $s-t$ is a deep hole.
Since $s$ and $t$ were chosen arbitrarily, it follows that $C$ is holy.

For (a), first note that by our intermediate result, the optimal codes of size $2$ are precisely those of the form $\{s+L,t+L\}$ where $s-t$ is a deep hole, and the minimum distance equals $R$.
This proves (a) in the case of holy codes of size $2$.
For larger holy codes, (i)$\Rightarrow$(iii) gives that the minimum distance is $R$, and by the monotonicity of the maximum minimum distance, it follows that these codes are optimal.
For (b), suppose there exists a holy code of size $n$.
Then the minimum distance equals $R$, and by (a), this is the maximum minimum distance.
As such, any other optimal code must also have minimum distance $R$, and (iii)$\Rightarrow$(i) gives that such codes are necessarily holy.
\end{proof}

We can use holy codes to characterize small optimal codes:

\begin{lemma}
\label{lem.lattice holy correspondence}
Select $L\in\{A_1,A_2,D_4,E_8,\Lambda_{24}\}$ and put $m:=\operatorname{dim}L$ and
\[
n_L:=\min( N(L)\setminus\{1\} ).
\]
Then for each $n\in\{2,\ldots,n_L\}$, the optimal codes in $\mathbb{R}^m/L$ of size $n$ are holy.
\end{lemma}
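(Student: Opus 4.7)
My plan is to reduce the claim, via Lemma~\ref{lem.holy characterization}(b), to exhibiting a holy code of each size $n\in\{2,\ldots,n_L\}$ in $\mathbb{R}^m/L$. Since holiness is a pairwise condition on coset differences, every subset of a holy code is itself holy, so it suffices to construct a single holy code of size exactly $n_L$ and then restrict to subsets.

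For this construction, I would use the lattice codes already at hand. Lemma~\ref{lem.dilated lattices} supplies, for each $L\in\{A_1,A_2,D_4,E_8\}$, a lattice code $(TL)/L$ of size $n_L$ obtained by taking $T=[z]^{-1}$ for any $z\in\overline{L}$ realizing the smallest value of $\|z\|_2$ strictly greater than $1$; for $L=\Lambda_{24}$, the Barnes--Sloane construction cited in the text produces a lattice code of size $2^{12}=n_L$ with some $T\in\operatorname{CO}(24)$. Writing $T=cQ$, the minimum distance of $(TL)/L$ is $\min\{\|v\|_2:v\in TL\setminus L\}$. The hypothesis $|TL:L|=n_L>1$ forces $c<1$, so $\ell(TL)=c\ell(L)<\ell(L)$ and no shortest vector of $TL$ can belong to $L$; it follows that $\delta((TL)/L)=c\ell(L)$.

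The main step is then the numerical identity $c\ell(L)=R$, where $R$ denotes the covering radius of $L$. In the first four cases $c=1/\|z\|_2$ with $\|z\|_2^2$ the second-smallest squared norm in $\overline{L}\setminus\{0\}$, and the classical descriptions of $\overline{L}$ as $\mathbb{Z}$, the Eisenstein integers, the Hurwitz quaternions, and the Coxeter--Dickson octonions give $\|z\|_2^2\in\{4,3,2,2\}$ respectively; this matches $(\ell(L)/R)^2\in\{4,3,2,2\}$ obtained from the classical covering radii $R/\ell(L)\in\{1/2,\,1/\sqrt{3},\,1/\sqrt{2},\,1/\sqrt{2}\}$. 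For $L=\Lambda_{24}$, the Barnes--Sloane scaling satisfies $c^{24}=1/2^{12}$, so $c=1/\sqrt{2}$, which again coincides with $R/\ell(\Lambda_{24})=1/\sqrt{2}$.

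With $\delta((TL)/L)=R$ established, Lemma~\ref{lem.holy characterization}(iii)$\Rightarrow$(i) shows that $(TL)/L$ is holy; its subsets of size $n$ furnish holy codes of every size $n\in\{2,\ldots,n_L\}$, and Lemma~\ref{lem.holy characterization}(b) converts these into the desired statement. The main obstacle is essentially bookkeeping: one must correctly identify both the second-shortest norm and the covering radius in each of the five lattices, with $\Lambda_{24}$ being the least uniform case since no Euclidean Hurwitz algebra exists in dimension~$24$ and one has to import the Barnes--Sloane construction rather than derive it from multiplicative structure on $\overline{L}$.
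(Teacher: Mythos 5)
Your proposal is correct and follows essentially the same route as the paper: take the lattice code of size $n_L$ from Lemma~\ref{lem.dilated lattices} (or Barnes--Sloane for $\Lambda_{24}$), show its minimum distance equals the covering radius of $L$, and then invoke Lemma~\ref{lem.holy characterization}. You helpfully make explicit two points that the paper leaves implicit, namely that $\delta((TL)/L) = c\ell(L)$ because $c<1$ forces the shortest vectors of $TL$ out of $L$, and that subsets of holy codes are holy so that a single holy code of size $n_L$ yields holy codes of every size $n\in\{2,\ldots,n_L\}$; your numerical verifications of the second-shortest norms and covering radii also check out.
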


\begin{proof}
Consider any code $(TL)/L$ of size $n_L$ that is constructed in the proof of Lemma~\ref{lem.dilated lattices} (or in Example~3.2(d) of~\cite{BarnesS:83} in the case $L=\Lambda_{24}$).
This code has minimum distance $\ell(L)\cdot n_L^{-1/m}$, which in each case equals the covering radius of $L$~\cite{ConwayS:13}.
The result then follows from Lemma~\ref{lem.holy characterization}.
\end{proof}

\subsubsection{Testing conjectures}

By virtue of their translation symmetry, Conjecture~\ref{conj.1} necessarily holds for any unicorn sequence of lattice codes.
(Of course, Conjecture~\ref{conj.1} remains unproven even for the flat torus $\mathbb{R}^2/A_2$ since there may be unicorn sequences consisting of non-lattice codes.)
In order to obtain such a unicorn sequence, we must avoid nonuniqueness as in Figure~\ref{fig.lattice}.
Let $N':=\{1, 3, 4, 9, 12, 16, 25, \ldots\}$ denote the positive integers for which (i) no prime factor is $1\bmod3$ and (ii) every prime factor that is $2\bmod3$ has even multiplicity (OEIS A230781).
These are precisely the squared radii of circles centered at the origin that intersect $\overline{A}_2$ at exactly $6$ points; see p.~112 of~\cite{ConwayS:13}.
This sequence yields the following:

\begin{lemma}
\label{lem.triangular torus unicorn sequence}
For every $n\in N'$, the optimal code in $\mathbb{R}^2/A_2$ of size $n$ is unique up to isometry.
\end{lemma}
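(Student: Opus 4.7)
The plan is to reduce uniqueness of the optimal code to uniqueness of a lattice code, and then to count lattice codes of a given size in $\mathbb{R}^2/A_2$ using the Eisenstein-integer structure of $\overline{A}_2$.

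First I would observe that $N' \subseteq N(A_2)$ (both sets require every prime factor $\equiv 2 \pmod 3$ to have even multiplicity), so Lemma~\ref{lem.dilated lattices} supplies a lattice code of size $n$ in $\mathbb{R}^2/A_2$ whenever $n \in N'$, and then Lemma~\ref{eq.lattice codes opt}(b) tells me that every optimal code of size $n$ is, up to isometry, a lattice code. It therefore suffices to exhibit a unique lattice code of size $n$.

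To count lattice codes, I would identify $\mathbb{R}^2 \cong \mathbb{C}$ and $\overline{A}_2 \cong \mathbb{Z}[\omega]$ with $\omega = e^{2\pi i/3}$. A lattice code of size $n$ corresponds to a super-lattice $M$ of $\overline{A}_2$ with $|M:\overline{A}_2| = n$ that is conformally similar to $\overline{A}_2$. Since every element of $\operatorname{CO}(2)$ acts on $\mathbb{C}$ as $z \mapsto \alpha z$ or $z \mapsto \alpha \bar{z}$ for some $\alpha \in \mathbb{C}^\times$, and since $\mathbb{Z}[\omega]$ is closed under complex conjugation ($\bar{\omega} = \omega^2$), every such $M$ has the form $\alpha \mathbb{Z}[\omega]$. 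The containment $\mathbb{Z}[\omega] \subseteq \alpha \mathbb{Z}[\omega]$ then forces $w := \alpha^{-1} \in \mathbb{Z}[\omega]$, and the index $|w|^2$ equals $n$; two such $w_1, w_2$ yield the same super-lattice if and only if they are associates in $\mathbb{Z}[\omega]$.

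To finish, I would invoke the paper's characterization of $N'$: for $n \in N'$, exactly six elements of $\mathbb{Z}[\omega]$ have squared norm $n$, and since $|\mathbb{Z}[\omega]^\times| = 6$, these six elements form a single associate class, giving a unique super-lattice and hence a unique lattice code. I do not foresee a significant obstacle; the only slightly delicate step is the structural identification of conformally similar super-lattices with principal fractional ideals $w^{-1}\mathbb{Z}[\omega]$, which uses closure of $\mathbb{Z}[\omega]$ under conjugation and the description of $\operatorname{CO}(2)$ as (conjugate-)multiplication on $\mathbb{C}$.
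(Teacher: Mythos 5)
Your proposal is correct and takes essentially the same approach as the paper: reduce to lattice codes via Lemma~\ref{lem.dilated lattices} and Lemma~\ref{eq.lattice codes opt}(b), then use the Eisenstein-integer structure of $\overline{A}_2$ together with the defining property of $N'$ (exactly six lattice points of squared norm $n$, matching $|\mathbb{Z}[\omega]^\times|=6$) to pin down the super-lattice uniquely. The paper packages the final step by setting $z := T^{-1}e_1$ and observing it lies in a single orbit under rotation by $\pi/3$, which is precisely your associate-class argument in different notation.
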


\begin{proof}
Consider any optimal code $C\subseteq\mathbb{R}^2/A_2$ of size $n\in N'$, and fix $z_0\in \overline{A}_2$ with $\|z_0\|_2^2=n$.
Since $N'\subseteq N(A_2)$, there exists a lattice code in $\mathbb{R}^2/A_2$ of size $n$ by Lemma~\ref{lem.dilated lattices}, and so $C$ must be a lattice code up to isometry by Lemma~\ref{eq.lattice codes opt}.
In particular, there exists $T\in\operatorname{CO}(2)$ such that $C=(TA_2)/A_2$ up to isometry.
Let $R$ denote reflection about the $x$-axis.
Since $RA_2=A_2$, we also have $C=(TRA_2)/A_2$ up to isometry, meaning $T$ has positive determinant without loss of generality.
Put $z:=T^{-1}e_1\in \overline{A}_2$ and recall from the proof of Lemma~\ref{lem.dilated lattices} that $\|z\|_2^2=|TA_2:A_2|=n$.
Then $z$ must reside in the orbit $\{U^kz_0:k\in\{0,\ldots,5\}\}$, where $U$ denotes rotation by $\pi/3$ radians.
Take $T_0$ to be the unique member of $\operatorname{CO}(2)$ with positive determinant that maps $z_0$ to $e_1$.
Then $T_0=TU^k$ for some $k$, and since $T_0A_2=TU^kA_2=TA_2$ (by the symmetry of $A_2$), it follows that $C=(T_0A_2)/A_2$ up to isometry.
\end{proof}

We suspect that $\mathbb{R}^m/L$ is also a unicorn space for every $L\in\{D_4,E_8,\Lambda_{24}\}$, but we do not have a proof.
The primary obstacle is establishing uniqueness, which we accomplish for both $D_4$ and $E_8$ in the smallest nontrivial case below.
We note that a portion of this result is implied by the conjecture that $D_4$ gives the unique optimal periodic sphere packing in $\mathbb{R}^4$, and so one might treat this as evidence in favor of that conjecture:

\begin{theorem}
\label{thm.computational lattice result}
For each $L\in\{A_1,A_2,D_4,E_8\}$, the lattice code in $\mathbb{R}^m/L$ of size $n_L$ is optimal and unique up to isometry, and every optimal code of size $n_L-1$ is a subset of this lattice code up to isometry.
\end{theorem}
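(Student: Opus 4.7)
By Lemma~\ref{lem.lattice holy correspondence}, every optimal code of size $n \in \{2,\ldots,n_L\}$ in $\mathbb{R}^m/L$ is holy, with all pairwise distances equal to the covering radius $R$ of $L$. In particular, both the optimal code of size $n_L$ and every optimal code of size $n_L-1$ are holy. After translating we may assume the code contains $0+L$, so every other element is a deep-hole coset of $L$ and every pairwise difference is likewise such a coset. The proof reduces to classifying such configurations of cosets in $\mathbb{R}^m/L$.

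For $L \in \{A_1, A_2, D_4\}$, a direct inspection of the Voronoi cell shows that the deep holes of $L$ are precisely the elements of $L^* \setminus L$, so every holy code is (up to translation) a subset of the abelian group $L^*/L$, which has order exactly $n_L$ in each case (namely $2$, $3$, and $4$). Since every nonzero element of $L^*/L$ is a deep-hole coset and every nonzero pairwise difference remains nonzero, the unique size-$n_L$ holy code is all of $L^*/L$, i.e., the lattice code, while the size-$(n_L-1)$ holy codes are its $(n_L-1)$-element subsets containing $0+L$. These subsets are permuted transitively by the stabilizer of $0+L$ in the isometry group (the reflection swapping the two nontrivial cosets in $\operatorname{Aut}(A_2)$, and triality in $\operatorname{Aut}(D_4)$), yielding uniqueness up to isometry as well as the subset claim.

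For $L = E_8$ we invoke the uniqueness of Viazovska's packing~\cite{Viazovska:17}: since $E_8$ is the unique densest periodic packing in $\mathbb{R}^8$, Lemma~\ref{eq.lattice codes opt}(b) gives that the optimal code of size $n_{E_8}=16$ is a lattice code. Every such code has the form $[z]^{-1}\overline{E_8}/\overline{E_8}$ for an integral octonion $z \in \overline{E_8}$ with $\|z\|^2 = 2$ from Lemma~\ref{lem.dilated lattices}; it is a subgroup of $\mathbb{R}^8/\overline{E_8}$ isomorphic to $(\mathbb{Z}/2)^4$ whose $15$ nonzero elements are deep-hole cosets, and the transitive action of $\operatorname{Aut}(E_8)$ on length-$\sqrt{2}$ vectors of $\overline{E_8}$ yields uniqueness up to isometry. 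For the size-$15$ claim, it then suffices to show that every optimal (equivalently, holy) code of size $15$ admits a $16$th coset at distance $R$ from each of its members, thereby extending to a size-$16$ holy code.

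The main obstacle is this last extendability step for $E_8$: for $A_1, A_2, D_4$ it is automatic because $L^*/L$ has order exactly $n_L$, whereas for $E_8$ one must rule out ``rogue'' size-$15$ holy codes that cannot be completed. We expect to handle this either by a direct algebraic argument exploiting the integral-octonion multiplication on $\overline{E_8}$ from Lemma~\ref{lem.dilated lattices}, or by a finite computer enumeration over the $2160$ admissible octonions $z \in \overline{E_8}$ with $\|z\|^2 = 2$ together with the combinatorial structure of the deep-hole cosets they induce.
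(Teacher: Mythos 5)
Your reduction to holy codes via Lemma~\ref{lem.lattice holy correspondence} matches the paper's opening move, and your treatment of $A_1$, $A_2$, and $D_4$ is correct and is essentially the paper's argument seen through a cleaner lens: the paper enumerates the deep-hole cosets directly (finding $1$, $2$, and $3$ of them, respectively), whereas you observe that together with $0+L$ they form the abelian group $L^*/L$ of order exactly $n_L$, so that both uniqueness of the size-$n_L$ code and extendability from size $n_L-1$ are automatic. That packaging genuinely does all the work in those three cases.

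The $E_8$ case is where there are real gaps. First, your uniqueness argument needs Viazovska's theorem, whereas the paper deliberately avoids it: since Lemma~\ref{lem.lattice holy correspondence} already forces every optimal code of size at most $n_L$ to be holy, the paper classifies maximal holy codes directly by building the graph $G(E_8)$ on the $136$ cosets (the zero coset $L$ plus the $135$ deep-hole cosets), computing that the clique number is $16$, and checking by computer that $\operatorname{Aut}(E_8)$ acts transitively on the $270$ maximum cliques. Second, your claim that every size-$16$ lattice code has the form $[z]^{-1}\overline{E}_8/\overline{E}_8$ for an integral octonion $z$ of norm $\sqrt{2}$ is asserted without justification; Lemma~\ref{lem.dilated lattices} shows such codes \emph{exist}, not that they exhaust all $T\in\operatorname{CO}(8)$ with $TL\geq L$ and $|TL:L|=16$. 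Whether that classification of similar sublattices holds for $E_8$ is a nontrivial fact you would need to prove or cite, and the assertion that the resulting quotient is isomorphic to $(\mathbb{Z}/2)^4$ is likewise unverified. Third, you explicitly leave the size-$15$ extendability step open; this is exactly what the paper's computation (that every $15$-clique of $G(E_8)$ extends to a $16$-clique) supplies, so the plan is not complete there. As a minor slip, the $2160$ in your last paragraph counts the minimal-norm deep holes, not octonions with $\|z\|_2^2=2$ (there are $240$ of those); the natural object to enumerate over is the set of $135$ deep-hole cosets, as the paper does.
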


Our proof of this result relies on computer assistance.
The code is included as an ancillary file in the arXiv version of the paper.

\begin{proof}[Proof of Theorem~\ref{thm.computational lattice result}]
First, we consider optimality.
For all cases but $D_4$, optimality follows from Lemma~\ref{eq.lattice codes opt}, though this requires the fact that these lattices give optimal sphere packings.
Instead, the proof of Lemma~\ref{lem.lattice holy correspondence} provides a direct proof of optimality that also treats the $D_4$ case.

Next, we consider uniqueness.
For $A_1$, uniqueness follows from the pigeonhole principle, while for $A_2$, uniqueness follows from Lemma~\ref{lem.triangular torus unicorn sequence}.
For $D_4$, we know from Lemma~\ref{lem.lattice holy correspondence} that every optimal code $C$ of size $n_L$ is holy.
Without loss of generality, $C$ takes the form $\{L,x+L,y+L,z+L\}$, where each $x$, $y$ and $z$ is a deep hole.
For $D_4$, the deep holes of minimum norm form the vertices of the $24$-cell.
Explicitly, they are all signed permutations of $(1,0,0,0)$ and $(\frac{1}{2},\frac{1}{2},\frac{1}{2},\frac{1}{2})$.
Reducing these deep holes modulo $D_4$ produces only $3$ cosets of deep holes.
It follows that the optimal code is unique up to isometry.
For $E_8$, we also apply Lemma~\ref{lem.lattice holy correspondence}.
The deep holes of minimum norm are precisely the members of $E_8$ of norm $2$ scaled by $1/2$.
Explicitly, there are $2160$ deep holes of minimum norm, and they are obtained by collecting permutations of the following vectors in $\mathbb{R}^8$: $(1,0_7)$ with both sign patterns, $\frac{1}{2}\cdot(1_4,0_4)$ with all $2^4$ sign patterns, and $\frac{1}{4}\cdot(3,1_7)$ with an odd number of minus signs.
Reducing modulo $E_8$ produces $135$ cosets of deep holes.
Consider the graph $G(E_8)$ whose $136$ vertices are $L$ and the $135$ cosets of deep holes, and for which two vertices are adjacent if their difference is a coset of deep holes.
The clique number of this graph is $16$, implying the largest holy code in $\mathbb{R}^8/E_8$ has size $16$.
There are $270$ cliques of size $16$.
Next, we consider how the isometry group $\operatorname{Aut}(E_8)\ltimes(\mathbb{R}^8/E_8)$ acts on these cliques.
Each clique is invariant under translations by members of the clique, so it suffices to consider the action of $\operatorname{Aut}(E_8)$.
To accomplish this, we leverage the fact that $\operatorname{Aut}(E_8)$ is generated by the reflections about the hyperplanes orthogonal to the $240$ nonzero vectors of minimum norm in $E_8$~\cite{ConwayS:13}.
In particular, we start with an arbitrary clique of the graph, and then we iteratively apply a random reflection to reach another clique.
The result is a random walk along equivalent cliques that rapidly visits all cliques.
It follows that the optimal code of size $16$ in $\mathbb{R}^8/E_8$ is unique up to isometry.

Finally, we consider the optimal codes of size $n_L-1$.
By Lemma~\ref{lem.lattice holy correspondence}, these codes are necessarily holy.
If we generalize the above definition of $G(E_8)$ to $G(L)$, it suffices to show that every clique of size $n_L-1$ is contained in a clique of size $n_L$.
This easily holds in the cases $G(A_1)=K_2$, $G(A_2)=K_3$ and $G(D_4)=K_4$, and a quick calculation reveals that it also holds for $G(E_8)$.
\end{proof}

The Leech lattice $\Lambda_{24}$ is notably absent from Theorem~\ref{thm.computational lattice result}, and the reason is simple:
our computational approach to verify uniqueness requires access to all deep holes of minimum norm, but there are over $10^{19}$ such points in the case of the Leech lattice~\cite{SikiricSV:10}.
As such, a different approach is required to treat this case.

Next, to evaluate Conjecture~\ref{conj.2}, we first identify the symmetry strength.
We claim that for each $L\in\{A_1,A_2,D_4,E_8,\Lambda_{24}\}$, the symmetry strength of $\mathbb{R}^m/L$ is $2$.
(Note that we already know that this holds for $\mathbb{R}^1/A_1$, since this space is isometric to the circle.)
First, for every $s+L\in \mathbb{R}^m/L$, it holds that any nontrivial rotation about $s$ fixes $s$, and if the rotation part of this affine rotation belongs to $\operatorname{Aut}(L)$, then it also fixes $s+L$.
Since $\operatorname{Aut}(L)$ is nontrivial, it follows that the symmetry strength is at least $1$.
Next, for every $s+L,t+L\in \mathbb{R}^m/L$, it holds that $\{s+L,t+L\}$ is invariant under the action of $g\in\operatorname{Aut}(L)$ defined by $g(x)=s+t-x$, and so the symmetry strength is at least $2$.
Finally, for a generic choice of $s+L,t+L\in \mathbb{R}^m/L$, it holds that the triangle with vertices at $T:=\{L,s+L,t+L\}$ is scalene, and so $gT=T$ only if $gL=L$, $g(s+L)=s+L$, and $g(t+L)=t+L$.
Since $gL=L$, it follows that $g\in\operatorname{Aut}(L)\leq\operatorname{Aut}(L)\ltimes(\mathbb{R}^m/L)$.
Since $s+L$ is generic, it holds that $s$ is the unique point of its norm in $s+L$, and so $g(s+L)=s+L$ implies $g(s)=s$.
However, since $s$ is generic and $\operatorname{Aut}(L)$ is finite, it follows that $g=\operatorname{id}$.
Overall, the symmetry strength of $\mathbb{R}^m/L$ is exactly $2$, as claimed.

In \cite{DickensonGKX:11,ConnellyD:14}, it is conjectured that for every $n\in N(A_2)$ with $n>1$ for which $n-1\not\in N(A_2)$, the optimal codes of size $n-1$ are obtained by removing any point from any lattice code of size $n$.
This is known to hold for $n\in\{3,7\}$ and is open for $n=9$.
The conjecture implies that Conjecture~\ref{conj.2} holds for any unicorn sequence of lattice codes in $\mathbb{R}^2/A_2$, e.g., the sequence identified in Lemma~\ref{lem.triangular torus unicorn sequence}.
Presumably, for each $L\in\{D_4,E_8,\Lambda_{24}\}$, it similarly holds that for every $n\in N(L)$ with $n>1$, the optimal codes of size $n-1$ in $\mathbb{R}^m/L$ are obtained by removing a point from any lattice code of size $n$.
Theorem~\ref{thm.computational lattice result} establishes this phenomenon in the smallest case for both $D_4$ and $E_8$.

\subsection{Metric graphs}

Let $G=(V,E)$ be a graph with positive edge weights $\{w_e\}_{e\in E}$.
Here, we allow loops and multiple edges.
For each edge $e\in E$, consider an interval of length $w_e$, and glue the endpoints of these intervals according to the incidence rule prescribed by $G$.
The resulting set $M$, known as a \textbf{metric graph}, enjoys a notion of geodesic distance.
We restrict our attention to finite graphs since they determine compact metric graphs.
Notice that by smoothing vertices as necessary, every non-cyclic metric graph can be described in terms of a graph in which no vertex has degree $2$.
This identification allows us to identify the isometries of metric graphs:

\begin{lemma}
Consider a connected graph $G=(V,E)$ with loops $L\subseteq E$ and with positive edge weights $\{w_e\}_{e\in E}$.
Let $M$ denote the corresponding metric graph.
\begin{itemize}
\item[(a)]
If $G$ is a cycle, then the isometry group of $M$ is isomorphic to orthogonal group $\operatorname{O}(2)$.
\item[(b)]
If $G$ has no vertices of degree $2$, then the isometry group of $M$ is isomorphic to the direct product between $(\mathbb{Z}/2\mathbb{Z})^L$ and the subgroup of automorphisms of $G$ that hold the edge weights $\{w_e\}_{e\in E}$ invariant.
\end{itemize}
\end{lemma}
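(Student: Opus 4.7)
When $G$ is a cycle, gluing the edge intervals end-to-end produces a topological circle whose induced path metric makes $M$ isometric to the standard round circle of circumference $\ell := \sum_{e \in E} w_e$. The isometry group of such a circle with its geodesic metric is classically $\operatorname{O}(2)$, acting by rotations and reflections, so this case is immediate.

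\textbf{Plan for (b).} My plan is to produce a short exact sequence
\[
1 \to (\mathbb{Z}/2\mathbb{Z})^L \to \operatorname{Isom}(M) \to \operatorname{Aut}_w(G) \to 1,
\]
where $\operatorname{Aut}_w(G)$ denotes the weight-preserving automorphisms of $G$, and then to verify that it splits via a section commuting with the kernel. First I would locate the vertices metrically: for each $p \in M$, let $b(p)$ be the number of connected components of $B(p, \epsilon) \setminus \{p\}$ for all sufficiently small $\epsilon > 0$. A direct local check gives $b(p) = 2$ at every interior edge point and $b(v) = \deg(v)$ at every vertex $v$ (with loops contributing $2$ to the degree). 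Since no vertex has degree $2$, the vertex set $V$ equals $\{p \in M : b(p) \neq 2\}$ and is therefore preserved by every isometry. Any $\phi \in \operatorname{Isom}(M)$ consequently maps edges (maximal arcs in $M$ whose interiors avoid $V$) to edges of the same length, yielding a weight-preserving graph automorphism $\Phi(\phi)$ and a homomorphism $\Phi\colon \operatorname{Isom}(M) \to \operatorname{Aut}_w(G)$.

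Next I would compute $\ker \Phi$. If $\Phi(\phi) = \operatorname{id}$, then $\phi$ fixes each vertex and each edge setwise; its restriction to a non-loop edge is an endpoint-fixing isometry of an interval, hence the identity, while on a loop it is an isometry of a circle fixing the basepoint, hence either the identity or the unique reflection through that basepoint. This identifies $\ker \Phi$ with $(\mathbb{Z}/2\mathbb{Z})^L$, generated by the loop reversals. For surjectivity and a splitting, I would explicitly lift each $\sigma \in \operatorname{Aut}_w(G)$ by defining it edge-by-edge via the natural isometry onto its image edge, with orientations on non-loop edges forced by the vertex map and a consistent orientation choice fixed on each loop.

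\textbf{Main obstacle.} The conceptually trickiest step is the initial metric identification $V = \{b \neq 2\}$, since the no-degree-$2$ hypothesis is exactly what prevents a vertex from being locally indistinguishable from an interior edge point; ruling out pathological local geometry with loops and multi-edges is the point where care is required. A secondary subtlety is the direct-product assertion: the section built above commutes with loop reversals only if the chosen loop orientations are compatible across $\operatorname{Aut}_w(G)$-orbits of loops, and I would need to verify that such a coherent choice is always available (rather than settling for a merely semidirect decomposition).
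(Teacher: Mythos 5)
Your plan for (a) matches the paper exactly, and your plan for (b) is essentially the paper's sketch with one welcome refinement: the paper recovers the combinatorial structure by observing that an isometry permutes vertices and edge-midpoints, whereas you recover $V$ as the set of points with branching number $\neq 2$, which makes the no-degree-$2$ hypothesis do visible work. Both routes yield the same homomorphism $\Phi\colon \operatorname{Isom}(M)\to\operatorname{Aut}_w(G)$ with kernel $(\mathbb{Z}/2\mathbb{Z})^L$ and a splitting, so methodologically you and the paper agree.

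The substantive point is what you call a ``secondary subtlety.'' It is not secondary, and it cannot be repaired by a careful choice of loop orientations: the decomposition is a semidirect product, not a direct product, and the lemma as stated is false. Concretely, conjugating a loop reversal $r_\ell$ by your section $s(\sigma)$ gives $s(\sigma)\,r_\ell\,s(\sigma)^{-1}=r_{\sigma(\ell)}$ regardless of how you orient the loops, because the conjugate is forced to be the unique nontrivial basepoint-fixing involution of the image loop. So whenever $\operatorname{Aut}_w(G)$ permutes the loops nontrivially, the section fails to commute with the kernel, and no orientation convention changes this. A minimal counterexample to the direct-product claim: take $G$ to be a single vertex $v$ with two loops of equal weight. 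Then $v$ has degree $4$, so the hypothesis of (b) holds; $L$ has two elements and $\operatorname{Aut}_w(G)\cong\mathbb{Z}/2\mathbb{Z}$ swaps the loops. The isometry group of the resulting wedge of two equal circles is the wreath product $(\mathbb{Z}/2\mathbb{Z})^2\rtimes\mathbb{Z}/2\mathbb{Z}\cong D_4$, which is nonabelian of order $8$, whereas the asserted direct product $(\mathbb{Z}/2\mathbb{Z})^2\times\mathbb{Z}/2\mathbb{Z}\cong(\mathbb{Z}/2\mathbb{Z})^3$ is abelian. The correct statement is that $\operatorname{Isom}(M)\cong(\mathbb{Z}/2\mathbb{Z})^L\rtimes\operatorname{Aut}_w(G)$, with $\operatorname{Aut}_w(G)$ acting by permuting the coordinates of $(\mathbb{Z}/2\mathbb{Z})^L$ according to its action on $L$; this reduces to a direct product exactly when every weight-preserving automorphism fixes each loop. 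You should state the conclusion that way rather than trying to engineer a commuting section, and note that the paper's own proof sketch never checks the group law, which is where the slip enters. This error does not propagate: the later applications concern trees and unit-distance graphs without loops, where $L=\varnothing$ and the two products coincide.
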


\begin{proof}[Proof sketch]
First, (a) follows from the fact that $M$ is isometric to a circle.
It remains to prove (b).
Fix an orientation for each loop in $M$.
For each $e\in E$, let $u_e\in M$ denote the midpoint of $e$.
Put $U:=\{u_e:e\in E\}$.
Consider any isometry $\psi\colon M\to M$.
Then $\psi$ permutes the members of $V$, as well as the members of $U$.
Furthermore, since we may identify the edge $e$ with the ball about $u_e$ of radius $w_e/2$, it follows that $\psi$ permutes the members of $E$, while possibly flipping any subset of the loops.
In this way, $\psi$ determines a member of $(\mathbb{Z}/2\mathbb{Z})^L$ and an automorphism of $G$ that holds the edge weights invariant.
For the reverse direction, select $b\in(\mathbb{Z}/2\mathbb{Z})^L$ and let $\phi$ denote an automorphism of $G$ that holds the edge weights invariant.
We use $(b,\phi)$ to construct a mapping $\psi\colon M\to M$.
For every $e\in L$ and every $t\in[0,w_e)$, let $p(e,t)\in M$ denote the unique point on $e$ at distance $t$ from $v\in e$ in the direction determined by $b(e)$, and define $\psi(p(e,t)):=p(\phi(e),t)$.
For every $v\in e\in E\setminus L$ and every $t\in[0,w_e/2]$, let $p(v,e,t)\in M$ denote the unique point on $e$ at distance $t$ from $v$, and define $\psi(p(v,e,t)):=p(\phi(v),\phi(e),t)$.
One may show that $\psi$ is an isometry of $M$, as desired.
\end{proof}

\subsubsection{Uniquely optimal codes}

We start by focusing on unit-distance metric graphs, which have the property that each edge has unit length.
Such metric graphs frequently enjoy one of two ``obvious'' families of optimal codes: either uniformly distribute $k$ points on each edge, or do so with an additional point at each vertex.
Explicitly, for the first code, which we denote by $C_k$, we distribute $k$ points on each edge at locations $\frac{1}{2k},\frac{3}{2k},\ldots, \frac{2k-1}{2k}$ in a linear parameterization of the edge over $[0,1]$.
For the second code, which we denote by $C'_k$, we put a point on each vertex and distribute $k$ points on each edge at locations $\frac{1}{k+1},\frac{2}{k+1},\ldots, \frac{k}{k+1}$.
Then $C_k$ has size $k|E|$ and minimum distance $1/k$, while $C'_k$ has size $|V|+k|E|$ and minimum distance $1/(k+1)$.
The following characterizes when these codes are uniquely optimal:

\begin{theorem}
\label{thm.unique codes in unit-distance metric graphs}
Consider a connected unit-distance metric graph $G=(V,E)$ and select $k\in\mathbb{N}$.
\begin{itemize}
\item[(a)]
The code $C'_k$ is optimal and unique up to isometry if and only if $G$ is a tree.
\item[(b)]
The code $C_k$ is optimal and unique up to isometry if and only if $\displaystyle\min_{v\in V}\operatorname{deg}(v)\geq 2$.
\end{itemize}
\end{theorem}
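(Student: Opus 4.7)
The plan is a shared counting framework. For any code $C$ in the metric graph $M$ and each edge $e=\{u,v\}$, let $n_e$ denote the number of points of $C$ lying on the closed edge $e$. Since each closed edge is isometric to $[0,1]$, the one-dimensional pigeonhole argument from~\eqref{eq.pigeonhole for segment} gives $n_e\leq\lfloor 1/\delta(C)\rfloor+1$, with equality only when both endpoints of $e$ lie in $C$ and the points of $C$ on $e$ form the arithmetic progression $0,\delta,2\delta,\ldots,1$. Double counting $(e,x)$ incidences yields $\sum_{e\in E}n_e=|C|+\sum_{v\in C\cap V}(\operatorname{deg}(v)-1)$; writing $C_V:=C\cap V$ and partitioning $|E|=|E(C_V)|+|\partial C_V|+|E(V\setminus C_V)|$ (edges inside, straddling, and outside $C_V$), this identity recasts as the two master inequalities
\[
|C|\leq k|E|+|E(V\setminus C_V)|+|C_V|\quad\text{when }\delta=\tfrac{1}{k+1},
\]
\[
|C|\leq k|E|-|E(C_V)|-|\partial C_V|+|C_V|\quad\text{when }\delta=\tfrac{1}{k}.
\]

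For (a), under the tree hypothesis the induced subgraph $G[V\setminus C_V]$ is a forest, so $|E(V\setminus C_V)|+|C_V|\leq|V|$ with equality iff $C_V=V$. The first master inequality then gives $|C|\leq|V|+k|E|=|C'_k|$, and tracing back the equalities forces $n_e=k+2$ on every edge, pinning $C=C'_k$. For the converse I use the sibling code $C_{k+1}$, which has $(k+1)|E|\geq|V|+k|E|$ points, minimum distance $1/(k+1)$, and contains no vertex. If $|E|>|V|$, any size-$|C'_k|$ subset of $C_{k+1}$ is a vertex-free code at minimum distance $\geq 1/(k+1)$, so it either beats $C'_k$'s minimum distance (contradicting optimality) or produces an optimal code non-isometric to $C'_k$, the latter because isometries preserve the local degree at each code point and $C'_k$ contains a vertex of degree $\geq 2$. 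If $|E|=|V|$ and $G$ is not a pure cycle, $C_{k+1}$ itself witnesses non-uniqueness by the same local-degree invariance.

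For (b), under $\min_v\operatorname{deg}(v)\geq 2$ the handshake identity $2|E(C_V)|+|\partial C_V|=\sum_{v\in C_V}\operatorname{deg}(v)\geq 2|C_V|$ gives $|E(C_V)|+|\partial C_V|\geq|C_V|$, so the second master inequality yields $|C|\leq k|E|=|C_k|$, with equality forcing $C_V=\varnothing$ and $n_e=k$ on each edge. On each edge $e$ the interior configuration is then a one-parameter shift $\varepsilon^{(e)}$, and the cross-vertex inequalities ``the closest-to-$v$ distances on any two incident edges sum to $\geq 1/k$'' chain these shifts together; propagating the resulting system through every cycle and through the higher-degree vertices (which $\min\operatorname{deg}\geq 2$ interlinks sufficiently) pins all shifts to a common global shift, which is absorbed by an isometry of $M$. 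Conversely, if $v$ is a leaf then the code point of $C_k$ nearest $v$ sits at distance $1/(2k)$ from $v$ on its incident edge and may be slid continuously toward $v$ without shrinking any other pairwise distance, producing a one-parameter family of optimal codes that are pairwise non-isometric generically.

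The main obstacle in both forward directions is comparison across isometry classes. For (a), one must invoke the local-degree invariant of isometries from the preceding lemma to distinguish $C_{k+1}$ from $C'_k$ outside the pure-cycle case. For (b), the delicate propagation of cross-vertex shift inequalities must be closed up to force equality everywhere, and this is precisely the step that uses the hypothesis $\min\operatorname{deg}\geq 2$.
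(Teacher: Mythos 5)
Your approach via the global counting identity $\sum_{e\in E} n_e = |C| + \sum_{v\in C_V}(\deg(v)-1)$ and the two ``master inequalities'' is genuinely different from the paper's proof, which instead runs an induction on $|E|$ in part~(a) and a case split on $C\cap V$ in part~(b). For the upper bound in~(a) your route works and is arguably cleaner, \emph{provided} the master inequalities are derived correctly --- but as written they are not. Feeding the uniform bound $n_e\leq\lfloor 1/\delta\rfloor+1$ into the identity yields $|C|\leq k|E_2|+(k+1)|E_1|+(k+2)|E_0|+|C_V|$ for $\delta=1/(k+1)$, which is weaker (by $|E_1|+|E_0|$) than your claimed $|C|\leq k|E|+|E_0|+|C_V|$. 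To obtain the master inequalities you must refine the per-edge count of interior points according to how many endpoints of $e$ lie in $C_V$ (at most $k$ when at least one endpoint is in $C_V$, at most $k+1$ when neither is); this is a fixable gap but a gap nonetheless. The converse arguments in~(a) and~(b) essentially reproduce the paper's constructions and are fine.

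The serious gap is in the uniqueness step of part~(b). First, equality in your master inequality does \emph{not} force $C_V=\varnothing$: if $G$ is a cycle, then $C_V=V$ also saturates the handshake bound (every vertex has degree $2$, $|E(C_V)|=|C_V|$, $|\partial C_V|=0$), and the paper's Case~I is there precisely to handle this. Second, and more fundamentally, the assumption that the interior configuration on each edge ``is then a one-parameter shift $\varepsilon^{(e)}$'' is false: $k$ points in the open unit interval with pairwise spacing at least $1/k$ have slack strictly less than $1/k$ to distribute, and need not be evenly spaced. Third, the conclusion that the shifts ``pin to a common global shift, which is absorbed by an isometry of $M$'' cannot be right for non-cyclic metric graphs --- there the isometry group is discrete (a finite group of graph automorphisms, possibly extended by loop flips), and there is no global-shift isometry to absorb anything. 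The actual propagation in the paper's Case~II is done via an orientation of each edge toward an endpoint near a code point, an observation that two in-oriented edges at a common vertex force both to be \emph{bi}-oriented, and a handshaking argument on the digraph of un-bi-oriented edges; that argument is the real content of the uniqueness proof and is not replaced by anything in your sketch.
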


\begin{proof}
First, we show that the minimum distance of any code of size $|V|+k|E|$ is at most $1/(k+1)$.
This certainly holds if $|E|=1$.
Now select $m\in\mathbb{N}$ with $m\geq 2$ and suppose our claim holds for every unit-distance metric graph with $|E|<m$.
Consider any unit-distance metric graph $G$ with $|E|=m$ and any code $C$ of size $|V|+k|E|$ in $G$.
Suppose the interior of any edge $e\in E$ contains $k$ or fewer members of $C$, and let $C'$ denote these points.
Then applying the induction hypothesis to the connected components of $G-e$ gives $\delta(C)\leq \min(\delta(C'),\delta(C\setminus C'))\leq 1/(k+1)$.
It remains to consider the case in which the interior of every $e\in E$ contains at least $k+1$ members of $C$.
Then $(k+1)|E|
\leq|C|
=|V|+k|E|$,
and so $|E|\leq|V|$.
By connectivity, we must have either $|E|=|V|$ or $|E|=|V|-1$.
In the former case, $G$ contains a cycle, in which case the average distance between consecutive members of $C$ in this cycle is at most $1/(k+1)$, and so $\delta(C)\leq 1/(k+1)$.
In the latter case, $G$ is a tree.
Then $|C|=|V|+k|E|=(k+1)|E|+1$, and so one of the leaf edges contains exactly $k+1$ points in its interior and has no point at the corresponding leaf vertex $y$.
Take $x$ to be the member of $C$ that is closest to $y$ and consider the modification $C':=(C\setminus\{x\})\cup\{y\}$.
Then $C'$ has only $k$ points in the interior of the edge incident to $y$, and so the induction hypothesis again gives $\delta(C)\leq \delta(C')\leq 1/(k+1)$.

Overall, the minimum distance of any code of size $|V|+k|E|$ is at most $1/(k+1)$.
This bound can be saturated by putting a point at each vertex and uniformly distributing $k$ points on each edge.
As such, the optimal codes of size $|V|+k|E|$ have minimum distance exactly $1/(k+1)$.

\medskip

(a) For ($\Rightarrow$), consider the contrapositive and suppose $|E|\geq |V|$.
Then one may distribute $k+1$ points on each edge at locations $\frac{1}{2(k+1)},\frac{3}{2(k+1)},\ldots, \frac{2(k+1)-1}{2(k+1)}$ in a linear parameterization of the edge over $[0,1]$, and then take any subset of size $|V|+k|E|$ to produce an inequivalent code of minimum distance $1/(k+1)$.
For ($\Leftarrow$), we induct on the size of the tree.
The claim holds for the tree on $2$ vertices, as the tree in this case reduces to an interval.
Now suppose it holds for all trees on $n\geq 2$ vertices, and consider any tree $M$ on $n+1$ vertices and any optimal code $C$ on $M$.
This tree has a leaf edge.
Delete this edge (and the corresponding leaf vertex) to produce a smaller tree $M'$, and consider the code $C':=C\cap M'$.
Since $\delta(C')\geq1/(k+1)$, we know that $|C'|\leq|C|-(k+1)$ by the induction hypothesis.
Also, since $\delta(C)=1/(k+1)$, we know that $|C\setminus C'|\leq k+1$ by the pigeonhole principle.
Combining these inequalities then gives $|C\setminus C'|= k+1$.
By the induction hypothesis, the desired code $C'_k\subseteq M'$ is uniquely optimal for $M'$, and this extends uniquely to the claimed optimal code for $M$.

\medskip

For (b), we start with ($\Rightarrow$).
Consider the contrapositive and take $G$ with minimum degree $1$.
Then $G$ has a leaf vertex $y$.
Take $x$ to be the member of $C_k$ that is closest to $y$ and consider the modification $C':=(C_k\setminus\{x\})\cup\{y\}$.
Then $\delta(C_k)\leq\delta(C')$, but $C'$ is inequivalent to $C_k$.
Next, for ($\Leftarrow$), first note that if $\min_{v\in V}\operatorname{deg}(v)\geq 2$, then $G$ is not a tree, and so $|E|\geq|V|$, 
which in turn implies $|C_k|=k|E|\geq|V|+(k-1)|E|$.  As proven above, any code of size $|V|+(k-1)|E|$ has minimum distance at most $1/k=\delta(C_k)$; thus, monotonicity of maximum minimum distance implies that $C_k$ is optimal.
It remains to show that $C_k$ is unique up to isometry.
To this end, suppose that $C$ is a code of size $k|E|$ with minimum distance $1/k$.

\textbf{Case I:}
$C$ intersects $V$.
Put $U:=C\cap V\neq\varnothing$, and let $E'$ denote the set of edges with at least one endpoint in $U$.
By pigeonhole, each edge has at most $k+1$ points in its closure, and if it has exactly this many points, then both endpoints are in $C$.
For each $e\in E$, let $c(e)$ denote the number of points in the interior of $e$.
Then
\[
k|E|
=|C|
=|U|+\sum_{e\in E}c(e)
\leq |U|+k|E\setminus E'|+(k-1)|E'|,
\]
and so $|E'|\leq|U|$.
Next, we apply the fact that the minimum degree of $G$ is at least $2$:
\[
|E'|
\leq|U|
=\sum_{u\in U}\frac{1}{\operatorname{deg}(u)}\sum_{e\in E}\left\{\begin{array}{cl}1&\text{if }u\in e\\0&\text{otherwise}\end{array}\right\}
\leq\frac{1}{2}\sum_{e\in E}\sum_{u\in U}\left\{\begin{array}{cl}1&\text{if }u\in e\\0&\text{otherwise}\end{array}\right\}
\leq|E'|.
\]
As such, equality is forced.
Note that equality in the second inequality requires $\operatorname{deg}(u)=2$ for every $u \in U$, while equality in the last inequality implies that both endpoints of every $e\in E'$ reside in $U$.
It follows that $U$ induces a cyclic connected component, and since $G$ is connected by assumption, $G$ must be a cycle.
Then $C_k$ is unique up to isometry, although the isometry group in this case is $\operatorname{O}(2)$; see Section~\ref{sec.circle}.

\textbf{Case II:}
$C$ does not intersect $V$.
Then each edge has at most $k$ points on its interior, and so by pigeonhole, every edge has exactly $k$ points on its interior. We will prove the claim by measuring the lengths along all of the edges in two different ways. First, note that for edge $e$ with vertices $u$ and $v$, the distance between the elements of $C$ closest to $u$ and $v$, respectively, must be at least $(k-1)/k$. Second, for vertex $v$, the sum of lengths from $v$ to closest point in $C \cap e$ for each $e$ incident with $v$ is at least $\deg(v)/2k$.  If $\deg(v) >2$, this inequality is saturated precisely when each of those distances is $1/(2k)$. Thus, measuring the graph edge lengths, we have via the handshaking lemma that
\[
\abs{E} \geq \sum_{v \in V} \frac{\deg(v)}{2k} + \sum_{e \in E} \frac{k-1}{k} = \frac{2\abs{E}}{2k} +\frac{k-1}{k}\abs{E} = \abs{E}.
\]
Hence, at every edge $e$ incident to a vertex of degree strictly greater than $2$, $e \cap C = e \cap C_k$. If there is a vertex $v$ of degree $2$, then either all vertices in $G$ have degree $2$ and we have a cycle graph where $C_k$ is unique up to isomorphism or $v$ lies on a path $P$ with end vertices $u$ and $w$ of degree $> 2$ in $G$. The points in $P \cap C$ closest to $u$ and $v$ must be at least $1/2k$ from $u$ and $v$, respectively. Thus, pigeonhole delivers that $P \cap C = P \cap C_k$.  I.e., $G$ is either a cycle and $C$ is isomorphic to $C_k$ under $O(2)$ or $G$ has at least one vertex of degree $>2$ and $C = C_k$.
\end{proof}

Theorem~\ref{thm.unique codes in unit-distance metric graphs} offers two different types of unicorn unit-distance metric graphs: trees and connected graphs with no leaves.
While the latter case is difficult to analyze, the following result generalizes the former case beyond the setting of unit-distance graphs.
Specifically, we refer to any metric graph that arises from a tree as a metric tree.
Without loss of generality, no vertex has degree $2$, and in this case, we refer to the non-leaf vertices as junctions.
In the following result, we use ``divisor'' in the sense defined in Section~\ref{sec.orthotopes}.

\begin{theorem}
\label{thm.unicorn metric trees}
Consider any metric tree $(M,d)$, let $\ell$ denote the length of the shortest edge in $M$, and let $J$ and $L$ denote the sets of junctions and leaves in $M$, respectively.
The following are equivalent for $\delta\in(0,\ell)$:
\begin{itemize}
\item[(i)]
The optimal code of minimum distance $\delta$ in $(M,d)$ is unique up to isometry.
\item[(ii)]
The optimal code of minimum distance $\delta$ in $(M,d)$ is unique.
\item[(iii)]
There exist functions $f\colon J\to L$ and $g\colon J\to 2^L$ such that
\begin{itemize}
\item[(a)]
$f(u)\in g(u)$ for every $u\in J$,
\item[(b)]
$|g(u)|=\operatorname{deg}(u)$ for every $u\in J$,
\item[(c)]
for each $u\in J$, the paths from $u$ to each $v\in g(u)$ are internally disjoint,
\item[(d)]
$\delta$ is a divisor of $d(u,f(u))+d(u,v)$ for every $u\in J$ and $v\in g(u)\setminus\{f(u)\}$,
\item[(e)]
$\operatorname{frac}(\tfrac{d(u,f(u))}{\delta})\leq\tfrac{1}{2}$ for every $u\in J$.
\end{itemize}
\end{itemize}
Furthermore, one such $\delta$ exists if and only if $(M,d)$ is a unicorn space.
\end{theorem}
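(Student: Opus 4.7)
The plan is to establish the cycle (iii) $\Rightarrow$ (ii) $\Rightarrow$ (i) $\Rightarrow$ (iii); the middle implication is trivial (strict uniqueness implies uniqueness up to isometry), and the ``Furthermore'' clause is then handled separately. For (iii) $\Rightarrow$ (ii), I construct an explicit code $C_{f,g}$. Setting $\epsilon_u := d(u,f(u)) \bmod \delta \in [0, \delta/2]$ by (e), at each junction $u$ and each incident edge $e$ associated (by (b), (c)) with a unique $v_e \in g(u)$, I place points on $e$ at distances $\epsilon_u + j\delta$ from $u$ when $v_e = f(u)$, and $(\delta - \epsilon_u) + j\delta$ otherwise, for $j \geq 0$ truncated to $e$. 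The key consistency claim is that on an edge $e = (u,u')$ between two junctions the prescriptions from $u$ and $u'$ agree; this reduces to a congruence $d_0^u + d_0^{u'} \equiv d(u,u') \pmod{\delta}$, which I will establish by examining the four primary/non-primary sub-cases and chaining condition (d) along paths between the relevant leaves. An edgewise pigeonhole count analogous to \eqref{eq.pigeonhole for segment} gives optimality, and the rigidity of $\delta$-spaced packings anchored at every leaf forces any other code with the same cardinality and minimum distance $\delta$ to coincide with $C_{f,g}$ strictly.

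For (i) $\Rightarrow$ (iii), let $C$ be the optimal code with $\delta(C) = \delta$ that is unique up to isometry. I first show that every leaf of $M$ lies in $C$: since $\delta < \ell$, an omitted leaf $v$ would permit the nearest code point to slide continuously along the incident edge toward $v$ without decreasing $\delta(C)$, producing a one-parameter family of optimal codes, and since the isometry group of a metric tree is finite (by the preceding lemma), this contradicts uniqueness up to isometry. With every leaf in $C$, the minimum distance forces $\delta$-spaced chains on each edge. At each junction $u$, let $\alpha_1 \leq \cdots \leq \alpha_{\deg(u)}$ be the distances from $u$ to the nearest code point in each direction; minimum distance requires $\alpha_i + \alpha_j \geq \delta$ for $i \neq j$, so at most one $\alpha_i$ falls below $\delta/2$. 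Take $f(u)$ to be the leaf terminating the $\alpha_1$-direction and $g(u)$ to consist of the leaves terminating each direction. Conditions (a), (b), (c) are structural; (e) follows from $\alpha_1 \leq \delta/2$; and (d) follows from the $\delta$-spaced chain along the path from $f(u)$ to any $v \in g(u) \setminus \{f(u)\}$, making $d(u,f(u)) + d(u,v)$ a positive multiple of $\delta$.

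For the ``Furthermore'' clause, a unicorn space provides infinitely many uniquely optimal codes, each yielding some valid $\delta$ by the preceding equivalence. Conversely, given one valid $\delta_0$, condition (d) forces all relevant leaf-to-leaf path sums into $\delta_0\mathbb{Z}$, so divisibility by $\delta_0/k$ is preserved for every $k \in \mathbb{N}$; condition (e) may require re-choosing $f$, but there are only finitely many residue patterns among $\{d(u,v) \bmod (\delta_0/k) : u \in J,\, v \in L\}$, so an infinite subsequence of $k$ admits a valid simultaneous choice of $f$ at every junction. Combined with polynomial-time computability of paths and residues in the finite weighted tree, this yields an infinite unicorn sequence as required by Definition~\ref{def.formal}. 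The main obstacle will be the edge-consistency verification in (iii) $\Rightarrow$ (ii): reconciling primary/non-primary phase prescriptions across an edge between two junctions requires careful use of (d) along chains of junctions, since $g(u')$ and $f(u')$ need not directly match up with $f(u)$'s direction from $u'$. A secondary difficulty is the Furthermore direction, where refining $\delta_0$ to $\delta_0/k$ may violate (e) for any fixed choice of $f$ and thus requires the infinite-subsequence argument above.
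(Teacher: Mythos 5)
Your cycle (iii)$\Rightarrow$(ii)$\Rightarrow$(i)$\Rightarrow$(iii) is a harmless reorganization of the paper's argument: your (i)$\Rightarrow$(iii) fuses the paper's (i)$\Rightarrow$(ii) (finite isometry group $\Rightarrow$ no rattle $\Rightarrow$ all leaves in the code, $\delta$-spaced chains, peeling from the leaves inward) with its (ii)$\Rightarrow$(iii) (read $f$ and $g$ off the chains), and your (iii)$\Rightarrow$(ii) construction of $C_{f,g}$ with the phases $\epsilon_u$ and $\delta-\epsilon_u$ matches the paper's ``place members uniformly along the path from $f(u)$ to $v$.'' One small repair in (i)$\Rightarrow$(iii): the inequality $\alpha_1\leq\delta/2$ does not follow from ``at most one $\alpha_i$ falls below $\delta/2$'' (all of them could exceed $\delta/2$). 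You need the no-rattle hypothesis again here to force $\alpha_1=0$ or $\alpha_1+\alpha_2=\delta$, whence $\alpha_1\leq\delta/2$. Your identification of edge-consistency as the crux of (iii)$\Rightarrow$(ii) is apt, and chaining (d) along paths between the relevant leaves is the right tool; in the apparently troublesome sub-cases the (d)-constraints imposed at \emph{both} endpoints of an interior edge combine to force extra congruences (of the shape $\delta\mid 2\,d(u',f(u'))$) that rescue consistency. Note, though, that this is presently announced rather than carried out; the paper itself only asserts, without verification, that (b) and (c) ``determine'' the code on interior edges, so completing the chaining would fill a step the paper also leaves implicit.

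The genuine gap is in the ``Furthermore'' direction, where you depart from the paper's approach and your substitute does not work. You propose to refine $\delta_0$ to $\delta_0/k$, keep $g$ fixed, and re-choose $f$ to rescue (e), arguing that there are ``only finitely many residue patterns among $\{d(u,v)\bmod(\delta_0/k)\}$.'' Two problems. First, re-choosing $f(u)=v_1$ to $f(u)=v_2$ inside $g(u)$ can break (d): under the new $f$, condition (d) requires $\delta\mid 2\,d(u,v_1)$, an additional congruence that need not hold for the refined $\delta=\delta_0/k$. Second, the finitely-many-residue-patterns claim is false when the edge lengths involved are irrationally related: the sets $\{d(u,v)\bmod(\delta_0/k)\}$ need not recur as $k$ grows, so there is no pigeonhole to extract an infinite subsequence with a consistent simultaneous choice of $f$. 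The paper avoids both issues by \emph{fixing} $f$ and $g$, letting $\Delta$ be the gcd of the path lengths $d(u,f(u))+d(u,v)$ and setting $z_u:=d(u,f(u))/\Delta$, and then invoking Weyl equidistribution on the torus $\mathbb{T}^J$ (Proposition~1.1.5 of \cite{Tao:12}) to show that $\operatorname{frac}(kz_u)\leq\tfrac{1}{2}$ holds simultaneously for all $u\in J$ for a positive asymptotic density of $k\in\mathbb{N}$. This equidistribution input is not optional: when some $z_u$ is irrational, no elementary periodicity argument will produce the required infinite family of $k$.
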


\begin{proof}
Since the isometry group of a metric tree is finite, (i) implies that the optimal code of minimum distance $\delta$ exhibits no rattle.
As such, this code necessarily contains all leaves, so we may start building the code from these leaf vertices and iteratively add points of distance $\delta$ until reaching a junction at each leaf edge.
Next, consider the tree $M_1$ obtained by deleting leaf edges from $M_0:=M$.
Then the code locations on the leaf edges of $M_1$ are determined by the code locations on the leaf edges of $M_0$. Repeat with $M_2$, etc.
The code is completely determined in this way, and so we conclude (ii).
The converse (ii)$\Rightarrow$(i) is immediate.

Next, supposing (ii) holds, we build functions $f$ and $g$ as described in (iii). 
We will use throughout this argument the fact that $C$ has no rattle.
Given the optimal code $C$ of minimum distance $\delta$, then for each $u\in J$, select an edge $e$ in $M$ incident to $u$ for which there exists $x\in C\cap e$ such that $d(x,u)\leq \delta/2$; in the case where $u\in C$, select $e$ incident to $u$ arbitrarily.
Traveling away from $u$ along $e$, we find a member of $C$ after every $\delta$ units of distance.
When we arrive at a junction, there must be another edge leaving the junction that has a member of $C$ that is $\delta$ away from our sequence.
We continue in this way until reaching a leaf $v\in L$ and we put $f(u):=v$.
For each of the other edges incident to $u$, we can follow the same process to identify $\operatorname{deg}(u)-1$ other leaves, which we combine with $f(u)$ to produce $g(u)$.
Note that by construction, $(f,g)$ satisfies (a)--(c).
For what follows, it is convenient to think of any $u\in C\cap J$ as residing in the edge emanating from $u$ towards $f(u)$ and not on any other edge incident to $u$.
If $\eps$ is the distance from $u$ to the closest member of $C$ to $u$ that resides in the path from $u$ to $f(u)$, then $\delta - \eps$ is the distance from $u$ to each of the closest members of $C$ to $u$ that resides in the path from $u$ to any $v\in g(u)\setminus\{f(u)\}$. Then (d) follows from the fact that $d(u,f(u))+d(u,v)$ is $\eps$ + $\delta -\eps$ plus an integer number of $\delta$ ``steps.''
Furthermore, since any two of these closest members  $w,w'\in C$ to $u$ along these paths must satisfy the constraint $d(w,w')\geq\delta$, it follows that (e) necessarily holds.

Now suppose (iii) holds.
Then (a), (c), and (d) together ensure that for every $u\in J$, one may place members of $C$ uniformly along the path from $f(u)$ to each $v\in g(u)\setminus\{f(u)\}$.
For each $v\in L$, it holds that $v\in g(u)$, where $u\in J$ is the neighbor of $v$.
As such, this determines all members of $C$ along edges emanating from leaves.
Furthermore, (b) and (c) together ensure that this determines all members of $C$ along edges emanating from junctions.
Overall, $C$ is completely determined, and (e) ensures that no two points are closer than $\delta$ apart.
We may therefore conclude (ii).

For the last claim, the ($\Leftarrow$) direction is immediate: $(M,d)$ is a unicorn space only if there is a decreasing sequence of $\delta$ for which (i) holds.
For the ($\Rightarrow$) direction, we fix $f$ and $g$ for which (iii) holds for some $\delta=\delta_0\in(0,\ell)$.
It suffices to show that there are infinitely many $\delta$ that simultaneously satisfy both (d) and (e) for this $f$ and $g$.
Let $\Delta$ denote the greatest common divisor of $d(u,f(u))+d(u,v)$ over all $u\in J$ and $v\in g(u)\setminus\{f(u)\}$.
Then any other positive common divisor takes the form $\delta=\Delta/k$ for some $k\in\mathbb{N}$.
Put $z_u:=d(u,f(u))/\Delta$ for each $u\in J$.
Then it suffices to show that there are infinitely many $k\in\mathbb{N}$ for which $\operatorname{frac}(kz_u)\leq1/2$ for every $u\in J$.
Note that for $\mathbb{T}:=\mathbb{R}/\mathbb{Z}$ and any $y \in \mathbb{R}$, $y + \mathbb{Z} = \operatorname{frac}(y) + \mathbb{Z}  \in \mathbb{T}$.  Thus, it is helpful to define $\alpha\in\mathbb{T}^J$ whose $u$-coordinate is the coset $z_u+\mathbb{Z}$.
Then we seek an infinite sequence of $k\in\mathbb{N}$ for which $x(k):=k\alpha\in[0,\tfrac{1}{2}]^J\subseteq\mathbb{T}^J$.
Write $k_0:=\Delta/\delta_0$.
Then $x(k_0)\in [0,\tfrac{1}{2}]^J$.
Let $B\subseteq J$ denote the indices of the entries of $x(k_0)$ that reside in $(\tfrac{1}{2}\mathbb{Z})/\mathbb{Z}$, and put $A:=J\setminus B$.
Then projecting onto $A$ and $B$ gives the decomposition $x=x_A+x_B$, where $x_B$ is a periodic function.
If $B=J$, we are done.
Otherwise, by Proposition~1.1.5 in~\cite{Tao:12}, it holds that $x_A$ is asymptotically equidistributed in a union $U$ of finitely many cosets of a subtorus $T'\subseteq \mathbb{T}^J$, and due to the periodicity of $x_B$, it further holds that $T'$ is a subtorus of $\mathbb{T}^A$.
Considering $x_A(k_0)\in(0,\tfrac{1}{2})^A\times\{0\}^B$, there exists a small neighborhood $N$ of $0$ in $T'$ such that $x(k_0)+z\in U\cap [0,\tfrac{1}{2}]^J$ for every $z\in N$.
It follows that $\mu(U\cap[0,\tfrac{1}{2}]^J)\geq\mu(N)>0$, where $\mu$ denotes the Haar measure on $U$.
The result then follows from asymptotic equidistribution, since an asymptotic proportion $\mu(U\cap[0,\tfrac{1}{2}]^J)$ of $k\in\mathbb{N}$ satisfies $x(k)\in[0,\tfrac{1}{2}]^J$.
\end{proof}

Consider any unicorn metric tree $(M,d)$ with a nontrivial isometry $g$, and select $\delta\in(0,\ell)$ for which there exists an optimal code $C$ of minimum distance $\delta$ in $(M,d)$ that is unique up to isometry.
Then $gC$ is also an optimal code.
However, Theorem~\ref{thm.unicorn metric trees} gives that $C$ is the unique optimal code, and so it must hold that $gC=C$.
This establishes Conjecture~\ref{conj.1} for metric trees.

\subsubsection{Linear programming bound}
\label{sec.lp}

Consider any partition $|C|=\sum_{e\in E}t_e$ with $t_e\in\mathbb{N}\cup\{0\}$, and suppose that for each $e\in E$, we assign $t_e$ members of $C$ to $e$.
(In particular, we allow a member of $C$ to be an endpoint of $e$, meaning there are different choices of assignments $\{t_e\}_{e\in E}$ that can be used to describe such codes.)
For each $v\in e\in E$ with $t_e>0$, let $d_{ve}\geq0$ denote the distance between $v$ and the closest member of $C$ to $v$ that was assigned to $e$.
Then the optimal codes for the assignment $\{t_e\}_{e\in E}$ can be obtained by solving the following linear program in terms of the edge lengths $\{w_e\}_{e\in E}$.
(For notational simplicity, we write this program assuming the graph is simple, which it is without loss of generality by virtue of subdivision.)
\begin{alignat*}{3}
\text{maximize}\qquad && \delta \\
\text{subject to}\qquad && d_{ve}+(t_e-1)\delta + d_{v'e}&\leq w_e &&\qquad e=\{v,v'\}\in E,~t_e>1\\
&& d_{ve} + d_{v'e}&= w_e &&\qquad e=\{v,v'\}\in E,~t_e=1\\
&& d_{ve}+d_{ve'}&\geq \delta &&\qquad e,e'\in E,~ e\neq e', ~ v\in e\cap e', ~ t_e,t_{e'}>0\\
&& d_{ve}&\geq0 &&\qquad v\in e\in E, ~ t_e,t_{e'}>0
\end{alignat*}
Specifically, this linear program is a relaxation of the optimal code problem that is necessarily tight when $\delta(C)\leq\min_{e\in E}w_e$ for every $C\subseteq M$ with $|C|=\sum_{e\in E}t_e$.
In particular, for every maximizer $\{d_{ve}\}_{v\in e\in E,t_e>0}$, the corresponding optimal codes have a point on $e$ at distance $d_{ve}$ away from $v$ for each $v\in e\in E$ with $t_e>0$.
If $t_e\geq 2$, then the other $t_e-2$ points assigned to $e$ are distributed in any way such that neighboring points have distance at least $\delta$.
In the case of equality $d_{ve}+(t_e-1)\delta + d_{v'e}=w_e$, these $t_e-2$ points are necessarily uniformly distributed between the extreme code points on $e$.

For each metric graph $M$, every sufficiently large code $C\subseteq M$ satisfies $\delta(C)\leq\min_{e\in E}w_e$, meaning the above linear program is tight in these settings.
In particular, we claim that it suffices to have
\begin{equation}
\label{eq.code bound size}
|C|
\geq 2\cdot\frac{\sum_{e\in E}w_e}{\min_{e\in E}w_e}.
\end{equation}
To see this, for each $x\in C$, select a closest $y\in C$ and consider the interval of length $\delta(C)/2$ in $M$ that starts at $x$ and traverses toward $y$.
These intervals are necessarily internally disjoint, and so
\[
|C|\cdot\frac{\delta(C)}{2}
\leq \sum_{e\in E}w_e
\leq \frac{|C|}{2}\cdot\min_{e\in E}w_e.
\]
Rearranging gives $\delta(C)\leq\min_{e\in E}w_e$, as desired.
Thus, by testing all possible assignments $\{t_e\}_{e\in E}$, the above linear program characterizes all sufficiently large optimal codes.
The number of such assignments is $\binom{n+|E|-1}{|E|-1}$.
As such, the computational complexity of this approach is sensitive to the number of edges in the metric graph, though for every fixed metric graph, the complexity is polynomial.

(We note that, while \eqref{eq.code bound size} can be improved with a more careful analysis, such an improvement is unnecessary in practice.
Instead, one may run the linear program for all $\binom{n+|E|-1}{|E|-1}$ of the assignments $\{t_e\}_{e\in E}$, and if the largest $\delta$ obtained is at most $\min_{e\in E}w_e$, then the relaxation is tight.)

We apply the linear program to test Conjecture~\ref{conj.2} in the context of unit-distance metric trees.
In particular, Theorem~\ref{thm.unique codes in unit-distance metric graphs}(a) reports that $\{C'_k\}_{k=1}^\infty$ is a unicorn sequence, and so for Conjecture~\ref{conj.2}, we are interested in characterizing the optimal codes of size $|V|+k|E|-1=(k+1)|E|$.
We considered all trees with non-trivial automorphism group on up to $7$ vertices, and in each case, we characterized the optimal codes of size $(k+1)|E|$ for $k\in\{1,\ldots,|E|\}$.
(The code is included as an ancillary file in the arXiv version of the paper.)
In all cases, for every optimal code $C$ of these sizes, there exists a nontrivial isometry $g$ for which $gC=C$.
This comports with Conjecture~\ref{conj.2}.
See Figure~\ref{fig.metric trees} for an illustration of these results.

\begin{figure}
\begin{center}
\vspace{-0.75in}
\begin{tabular}{ccc}
\hspace{-0.45in}
\begin{tabular}{c}
\includegraphics[scale=0.4,angle=40]{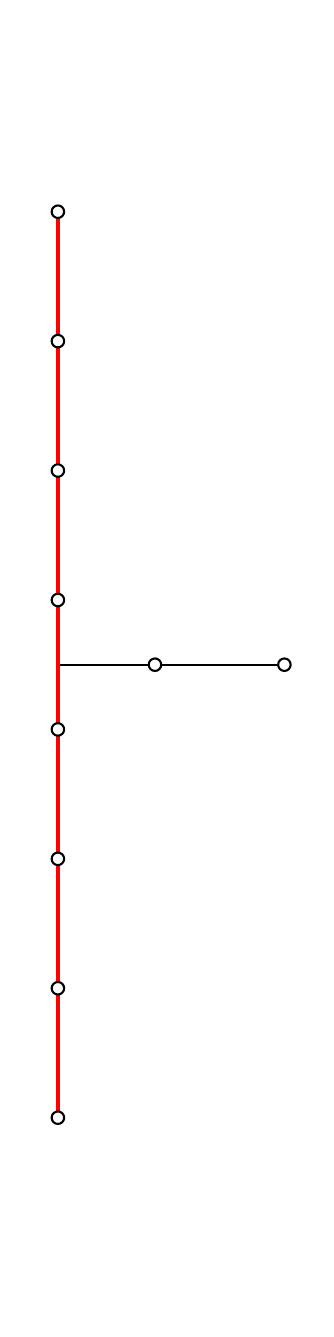}
\end{tabular}
\hspace{-0.5in}
&
\hspace{-0.75in}
\begin{tabular}{c}
\includegraphics[scale=0.4]{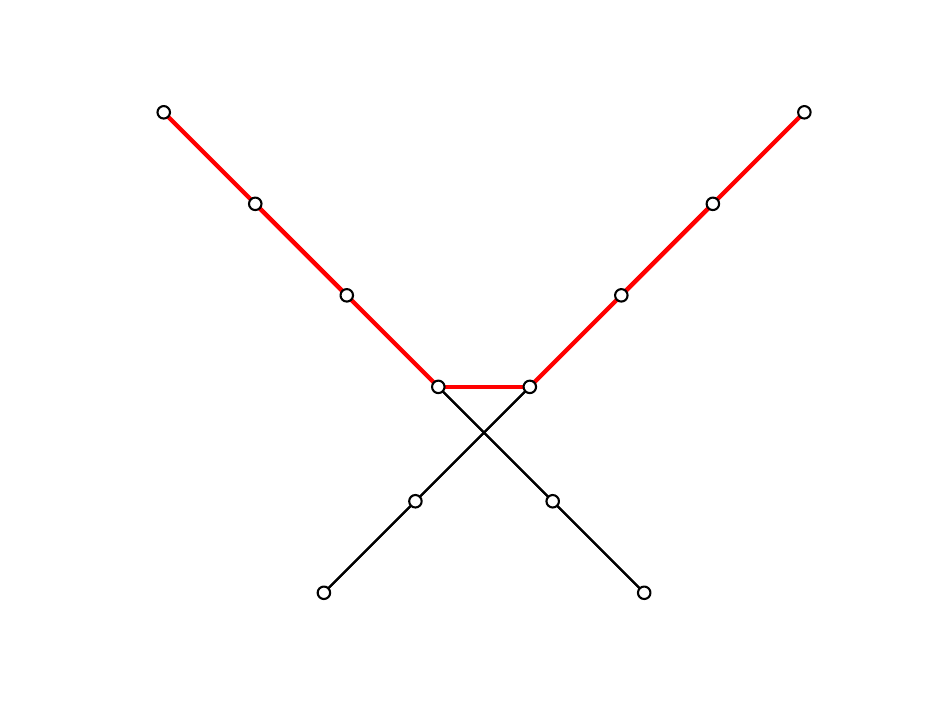}
\end{tabular}
\hspace{-0.5in}
&
\hspace{-1in}
\begin{tabular}{c}
\includegraphics[scale=0.4,angle=135]{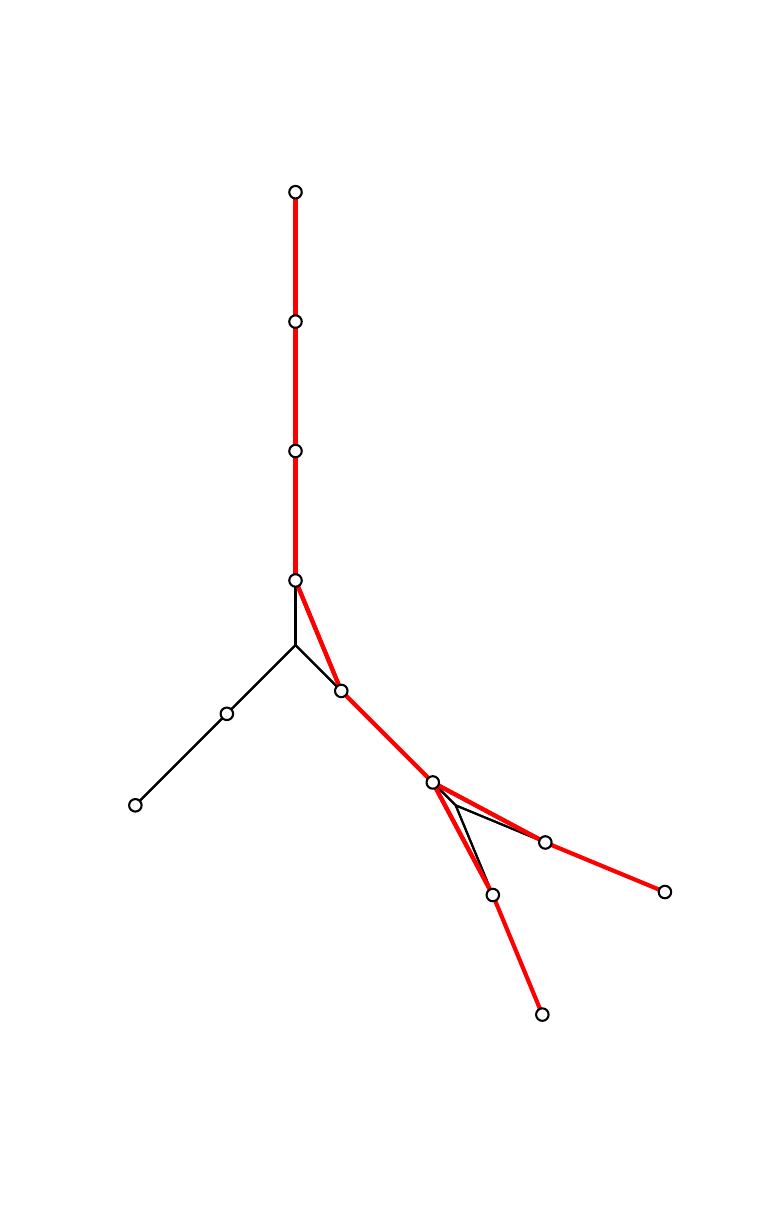}
\end{tabular}
\end{tabular}
\vspace{-0.75in}
\end{center}
\caption{\label{fig.metric trees}
Non-unique optimal codes in metric trees.
For each $m\leq6$, consider all metric trees with nontrivial isometry group consisting of $m$ unit-length edges, and apply the linear programming bound in Subsection~\ref{sec.lp} to find all optimal codes of size $2m$.
In all but the three cases illustrated above, the optimal code is unique (and therefore invariant under the full isometry group of the metric tree).
We draw a red edge between code points whose distance equals the minimum distance.
In each case, the code is invariant under a nontrivial isometry of the metric tree.
}
\end{figure}

\subsection{Ultrametric spaces}

A metric space $(M,d)$ is an ultrametric space if it satisfies the strong triangle inequality
\[
d(x,z)
\leq \max\Big( d(x,y), d(y,z) \Big)
\]
for every $x,y,z\in M$.
This section is concerned with compact ultrametric spaces so that optimal codes are guaranteed to exist.
As a simple example, we may define a metric on $M:=\{0,1\}^\infty$ by $d(x,y)=2^{-k(x,y)}$, where $k(x,y)$ is the smallest $k$ for which $x_k\neq y_k$.
This choice of $(M,d)$ is isometric to the $2$-adic integers, and in general, the $p$-adic integers also form a compact ultrametric space.
The following standard lemma makes it easy to find optimal codes in compact ultrametric spaces:

\begin{lemma}
\label{lem.ultrametric balls}
Let $(M,d)$ be an ultrametric space and select $r>0$. Then for every $x,y\in M$, the open balls $B(x,r)$, $B(y,r)$ are either equal or disjoint.
\end{lemma}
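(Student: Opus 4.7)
The plan is to prove the statement by contrapositive: assuming the two balls have nonempty intersection, I will show they must coincide. Fix a point $z \in B(x,r) \cap B(y,r)$, so that $d(x,z) < r$ and $d(y,z) < r$. A single application of the strong triangle inequality gives
\[
d(x,y) \leq \max\bigl(d(x,z), d(y,z)\bigr) < r,
\]
so in particular $y \in B(x,r)$ and $x \in B(y,r)$.

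Next, I will show that $B(x,r) \subseteq B(y,r)$. Pick any $w \in B(x,r)$. Applying the strong triangle inequality to the triple $(w,x,y)$, we get
\[
d(w,y) \leq \max\bigl(d(w,x), d(x,y)\bigr) < r,
\]
since both $d(w,x) < r$ and (from the previous step) $d(x,y) < r$. Hence $w \in B(y,r)$. By swapping the roles of $x$ and $y$, the reverse inclusion $B(y,r) \subseteq B(x,r)$ follows identically, yielding equality.

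This argument is essentially routine; the only minor subtlety is that one must be careful to use strict inequalities throughout, since the claim concerns \emph{open} balls. There is no real obstacle: the lemma is a direct one-line consequence of the strong triangle inequality applied twice.
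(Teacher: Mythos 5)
Your proof is correct and takes essentially the same approach as the paper: pick a common point, apply the strong triangle inequality once to get $d(x,y)<r$, then apply it again to show each ball is contained in the other. The only cosmetic difference is the order in which the two inclusions are established.
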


\begin{proof}
Suppose these balls intersect, and select $z\in B(x,r)\cap B(y,r)$.
Then $d(x,y) \leq \max(d(x,z),d(y,z)) < r$, and so $y\in B(x,r)$.
Now select any $u\in B(y,r)$.
Then $d(x,u) \leq \max(d(x,y),d(y,u)) < r$, meaning $u \in B(x,r)$.
Overall, $B(y,r)\subseteq B(x,r)$, and a similar argument gives the reverse containment. 
\end{proof}

Overall, $M$ is partitioned by the open balls of radius $r$.
Let $\Pi(r)$ denote this partitition.
When $M$ is compact, this open cover has a finite subcover, but any proper subcollection would fail to cover, so the full cover $\Pi(r)$ must be finite.
Decreasing $r>0$ has the effect of monotonically refining $\Pi(r)$.
These facts lead to the following:

\begin{lemma}
\label{lem.characterize ultrametric codes}
The optimal codes of size $n$ in the compact ultrametric space $(M,d)$ are the sets that consist of $n$ representatives from $\Pi(\delta)$, where $\delta=\max\{r:|\Pi(r)|\geq n\}$.
\end{lemma}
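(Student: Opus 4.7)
The plan is to exploit Lemma~\ref{lem.ultrametric balls}, which makes $\Pi(r)$ a genuine partition, so as to reduce the coding problem to the combinatorial problem of selecting representatives. I would first record three facts about $r\mapsto|\Pi(r)|$: it is non-increasing in $r$ (because enlarging $r$ can only merge balls, never split them, by Lemma~\ref{lem.ultrametric balls}); it is finite for every $r>0$ (extract a finite subcover from the open cover by balls of radius $r$, using compactness of $M$); and it is left-continuous in $r$, since two distinct cosets of $\Pi(r)$ merge only once $r$ strictly exceeds some positive distance $d(x,y)$ (the balls are open). Together these properties force $\{r : |\Pi(r)|\geq n\}$ to be a half-open interval of the form $(0,\delta]$, so the maximum $\delta$ is attained.

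Next I would verify both inclusions of the claimed characterization. For the easy inclusion, any set $C$ formed by picking one point from each of $n$ distinct blocks of $\Pi(\delta)$ is a code of size $n$ whose minimum distance is at least $\delta$, because Lemma~\ref{lem.ultrametric balls} implies that two points drawn from different open $\delta$-balls satisfy $d(x,y)\geq\delta$. For the converse, I would show that no code of size $n$ achieves minimum distance strictly greater than $\delta$: if $C$ had $\delta(C)>\delta$, then choosing any $r$ with $\delta<r\leq\delta(C)$ places the $n$ points of $C$ in $n$ distinct blocks of $\Pi(r)$, giving $|\Pi(r)|\geq n$ and contradicting the maximality of $\delta$.

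Combining the two bounds, the optimal minimum distance among size-$n$ codes is exactly $\delta$, and the constructions above show this is realized. To finish, I would rule out optimal codes that put two points in a single block of $\Pi(\delta)$: such a pair would be contained in an open ball of radius $\delta$ and hence at distance strictly less than $\delta$, contradicting optimality. Thus every optimal code consists of exactly one representative from each of $n$ distinct blocks of $\Pi(\delta)$, as claimed.

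The only real obstacle is the bookkeeping needed to confirm that the maximum defining $\delta$ is attained; once the monotonicity, finiteness, and one-sided continuity of $|\Pi(r)|$ are in hand, the rest is a direct consequence of the ultrametric dichotomy.
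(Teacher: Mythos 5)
Your proof is correct and follows essentially the same route as the paper's: invoke Lemma~\ref{lem.ultrametric balls} to deduce that distinct code points lie in distinct blocks of $\Pi(\delta(C))$, and conversely that representatives of distinct blocks of $\Pi(\delta)$ are pairwise at distance at least $\delta$. The paper's version is terser and leaves the attainment of the maximum entirely implicit; you handle it explicitly via left-continuity of $r\mapsto|\Pi(r)|$, which is a reasonable extra step. One small point: the reason you give for left-continuity (a merge only happens once $r$ \emph{strictly} exceeds some distance $d(x,y)$, since balls are open) rules out a merge occurring \emph{at} a distance value but does not by itself rule out distance values accumulating at $\delta$ from below. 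To close this, note that in an ultrametric space every open ball is also closed, hence compact, so the distance function attains its supremum on each block of $\Pi(\delta)$, and that supremum is strictly below $\delta$. Alternatively, one can bypass left-continuity altogether: by compactness of $M$ an optimal code $C^*$ of size $n$ exists, your argument shows $|\Pi(\delta(C^*))|\geq n$, and any $r$ with $|\Pi(r)|\geq n$ yields a code with minimum distance $\geq r$, so optimality forces $\delta(C^*)=\sup\{r:|\Pi(r)|\geq n\}$ and the supremum is attained.
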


\begin{proof}
Fix $C\subseteq M$.
For each $x,y\in C$, we have $x\not\in B(y,\delta(C))$, and so by Lemma~\ref{lem.ultrametric balls}, it holds that $B(x,\delta(C))$ is disjoint from $B(y,\delta(C))$.
Since $x$ and $y$ were chosen arbitrarily, it follows that the open balls of radius $\delta(C)$ about each point in $C$ are disjoint, and so $C$ consists of representatives from $n$ of the parts in $\Pi(\delta(C))$.
The result follows.
\end{proof}

Returning to our simple example $M=\{0,1\}^\infty$, notice that $C=\{(w,0_\infty):w\in\{0,1\}^m\}$ is an optimal code by Lemma~\ref{lem.characterize ultrametric codes}.
In particular, $C$ consists of representatives of all $2^m$ parts in $\Pi(\delta)$, where $\delta=2^{-(m+1)}$, and any larger $\delta$ would produce a strictly refined partition.
It turns out that this code is unique up to isometry, and this can be viewed as an instance of a more general result that we prove below.
First, we say that subsets $A$ and $B$ of a metric space $(M,d)$ are \textbf{isometric} if there exists a bijection $f\colon A\to B$ such that $d(f(x),f(y))=d(x,y)$ for every $x,y\in A$.
Next, we say a metric space is \textbf{homogeneous} if isometry between finite subsets extends to an isometry of the entire space.
Also, given a metric space $(M,d)$, we say that $x\in M$ is an \textbf{isolated point} if there exists $\epsilon>0$ such that $d(x,y)<\epsilon$ implies $y=x$.

\begin{lemma}
Let $(M,d)$ be a homogeneous compact ultrametric space with no isolated points.
\begin{itemize}
\item[(a)]
For every $r>0$, the optimal code of size $|\Pi(r)|$ in $(M,d)$ is unique up to isometry.
\item[(b)]
The symmetry strength of $(M,d)$ is infinite.
\end{itemize}
\end{lemma}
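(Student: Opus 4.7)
For (a), the plan is to exploit the partition structure of ultrametric balls together with homogeneity. Fix $r>0$, put $n=|\Pi(r)|$, and let $\delta=\max\{s:|\Pi(s)|\geq n\}$ as in Lemma~\ref{lem.characterize ultrametric codes}. Since $|\Pi(\cdot)|$ is non-increasing in its argument and $|\Pi(r)|=n$, we have $\delta\geq r$ and hence $|\Pi(\delta)|\leq|\Pi(r)|=n$; combined with the inequality built into the definition of $\delta$, this forces $|\Pi(\delta)|=n$. So an optimal code of size $n$ contains exactly one representative from each of the $n$ balls of $\Pi(\delta)$. The next key observation is that if $x$ and $x'$ lie in a common ball of $\Pi(\delta)$ while $y$ lies in a different ball, then the strong triangle inequality applied twice forces $d(x',y)=d(x,y)$, since $d(x,x')<\delta\leq d(x,y)$. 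Thus pairwise distances in any such code depend only on which balls are occupied, not on the choice of representative. Given two optimal codes $C_1$ and $C_2$, the map sending each point of $C_1$ to the unique point of $C_2$ in the same ball is therefore an isometry of finite subsets of $M$, and homogeneity extends it to an isometry of $M$ carrying $C_1$ onto $C_2$.

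For (b), the plan is to prove the strictly stronger statement that the pointwise stabilizer in the isometry group of every finite $T\subseteq M$ is nontrivial. First note that $M$ must be infinite, since every point of a finite metric space is isolated. Pick any $u\in M\setminus T$, set $\epsilon:=\min_{t\in T}d(u,t)>0$, and use the no-isolated-points hypothesis to find $u'\neq u$ with $d(u,u')<\epsilon$. The same two-line strong-triangle argument as in (a) then gives $d(u',t)=d(u,t)$ for every $t\in T$, and in particular $u'\notin T$. Consequently the map on $T\cup\{u\}$ that fixes $T$ pointwise and sends $u$ to $u'$ is an isometry of finite subsets, which homogeneity lifts to an isometry $g$ of $M$ fixing $T$ pointwise with $g(u)=u'\neq u$. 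Such a $g$ is nontrivial yet satisfies $gT=T$, and since $|T|$ was arbitrary, the symmetry strength is infinite.

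Neither part poses a serious obstacle: the whole argument rests on the isosceles property of ultrametrics together with the defining lifting property of homogeneity. The only substantive bookkeeping is verifying $|\Pi(\delta)|=n$ in (a) and $u'\notin T$ in (b), both of which follow immediately from the observations above.
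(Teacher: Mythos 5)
Both parts are correct. Your (a) is essentially the paper's argument: the strong triangle inequality forces pairwise distances in any system of representatives of $\Pi(\delta)$ to depend only on which parts are occupied, so any two optimal codes are isometric as finite sets, and homogeneity lifts the finite-set isometry to $M$. Your explicit verification that $|\Pi(\delta)|=n$ (and hence $\Pi(\delta)=\Pi(r)$) tidies up a step the paper leaves implicit.

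Your (b) is correct but genuinely simpler than the paper's. You pick a single $u\notin T$ (available since a compact space with no isolated points is infinite), perturb it to a nearby $u'$, and observe that the map fixing $T$ pointwise and sending $u\mapsto u'$ is a finite-set isometry, so homogeneity yields a nontrivial isometry of $M$ that fixes $T$ pointwise. The paper instead performs a two-level refinement of the ball of $\Pi(\delta(T))$ containing a chosen $x\in T$, producing sub-balls $B_{10},B_{11}$; it then picks two points in each and argues that $\{y,z\}$ is the uniquely closest pair in $T\cup\{y,z\}$ (and similarly for $\{y',z'\}$ in $T\cup\{y',z'\}$), which forces any isometry between the two augmented finite sets to carry $T$ onto $T$. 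That extra machinery is there to control which finite-set isometry gets lifted; under the paper's stated definition of homogeneity as an \emph{extension} property, however, the lift automatically agrees with the map you specified---which already fixes $T$ pointwise---so the precaution is unnecessary. In short: same idea for (a), and a shorter, cleaner construction for (b).
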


\begin{proof}
(a)
By Lemma~\ref{lem.characterize ultrametric codes}, every such optimal code $C$ consists of representatives of $\Pi(r)$.
For any $x,y\in C$, notice that $d(x,y)$ equals the infimum of $s>0$ for which $x$ and $y$ are assigned to the same part in $\Pi(s)$.
For any $A,B\in\Pi(r)$ and any $x,x'\in A$ and $y,y'\in B$, Lemma~\ref{lem.ultrametric balls} then gives that $d(x,y)=d(x',y')$.
It follows that any alternative choice $C'$ of representatives of $\Pi(r)$ is isometric to $C$.
As such, homogeneity implies that $C$ is unique up to isometry.

(b)
Given any finite $C\subseteq M$, select $x\in C$.
Since $(M,d)$ has no isolated points, there exists $\epsilon<\delta(C)$ such that the open ball $B\in\Pi(\delta(C))$ that contains $x$ is refined in $\Pi(\epsilon)$.
Select the largest $\epsilon_1>0$ for which $\Pi(\epsilon_1)$ partitions $B$ into at least two subsets, including $B_0$ and $B_1$, where $x\in B_0$.
Next, select the largest $\epsilon_2>0$ for which $\Pi(\epsilon_2)$ partitions $B_1$ into at least two subsets, including $B_{10}$ and $B_{11}$.
Select $y,y'\in B_{10}$ and $z,z'\in B_{11}$ such that $y\neq y'$ and $z\neq z'$.
(All of these choices are possible since $(M,d)$ has no isolated points.)
Then $C\cup\{y,z\}$ is isometric to $C\cup\{y',z'\}$, and by homogeneity, there must be an isometry $\varphi$ of $(M,d)$ that sends $C\cup\{y,z\}$ to $C\cup\{y',z'\}$.
This isometry is nontrivial since $C\cup\{y,z\}\neq C\cup\{y',z'\}$.
In addition, it holds that $d(y,z)=d(y',z')=\epsilon_2$, 
while all other distances are at least $\epsilon_1>\epsilon_2$, and so $C$ is invariant to $\varphi$.
Finally, the size of $C$ was arbitrary, and so it follows that $(M,d)$ has infinite symmetry strength.
\end{proof}

Notice that Conjecture~\ref{conj.1} trivially holds for unicorn spaces with infinite symmetry strength, while Conjecture~\ref{conj.2} vacuously holds.
It is unclear whether \textit{every} unicorn ultrametric space has infinite symmetry strength.

\subsection{An example in $\ell^p$}

In this section, it is convenient to index the standard basis of $\ell^p$ as $\{e_{(k,\epsilon)}\}$ with $k\in\mathbb{N}$ and $\epsilon\in\{\pm1\}$.
Explicitly, $e_{(k,+1)}=e_{2k-1}$ and $e_{(k,-1)}=e_{2k}$.
Select any nonempty $N\subseteq\mathbb{N}$, and consider the set
\begin{equation}
\label{eq.lpex}
M
:=\{0\}\cup\{\tfrac{1}{k}e_{(k,\pm1)}:k\in N\}\cup\{\tfrac{1}{k}e_{(k,+1)}:k\in \mathbb{N}\setminus N\}
\subseteq\ell^p.
\end{equation}
Observe that every sequence in $M$ either has a constant subsequence or converges to $0$.
Thus, the metric space $(M,\|\cdot\|_p)$ is sequentially compact and therefore compact.

It will be convenient to access the $k$-coordinate of an arbitrary member of $M$, so we define $\pi\colon M\to \mathbb{N}\cup\{0\}$ by $\pi\colon \tfrac{1}{k}e_{(k,\epsilon)}\mapsto k$ and $\pi\colon 0\mapsto 0$.
In addition, consider the involution $\iota\colon M\to M$ defined by $\iota\colon\tfrac{1}{k}e_{(k,\epsilon)}\mapsto\tfrac{1}{k}e_{(k,-\epsilon)}$ for each $k\in N$ and $\epsilon\in\{\pm1\}$ and otherwise $\iota\colon x\mapsto x$.
Then for every subset $S\subseteq N$, the map $g_S\colon M\to M$ defined by
\[
g_S(x)=\left\{\begin{array}{cl}
\iota(x)&\text{if }\pi(x)\in S\\
x&\text{otherwise}
\end{array}\right.
\]
is an isometry of $M$.

\begin{lemma}
Select $N\subseteq\mathbb{N}$, take $M$ defined by \eqref{eq.lpex}, and pick $n\in\mathbb{N}$ with $n>1$.
Let $C\subseteq M$ consist of $n$ points in $M$ of largest $\ell^p$ norm.
Then $C$ is an optimal code in $M$ and is unique up to isometry.
\end{lemma}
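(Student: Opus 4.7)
The plan is to first identify the isometries of $M$, then show $\delta(C')$ depends only on $\pi(C')$, and finally compare the $\pi$-multiset of any competitor $C'$ against that of $C$ through a case analysis of where it differs.

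First, I would argue that $0\in M$ is uniquely characterized by $\sup_{z\in M}\|x-z\|_p\leq 1$: it achieves $\sup=1$, while every nonzero $y=\tfrac{1}{k} e_{(k,\epsilon)}$ attains distance $(1+\tfrac{1}{k^p})^{1/p}>1$ to some $z\in M$ with $\pi(z)=1$ on a different basis coordinate. Hence every isometry of $M$ fixes $0$ and preserves $\|\cdot\|_p$, so it permutes each fiber $\pi^{-1}(k)$; singletons are fixed, and each two-element fiber $\{\tfrac{1}{k} e_{(k,\pm 1)}\}$ with $k\in N$ is either fixed or swapped via the basis permutation $e_{2k-1}\leftrightarrow e_{2k}$, which is a linear isometry of $\ell^p$. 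Thus the isometry group of $M$ is exactly $\{g_S:S\subseteq N\}$, and any two codes sharing the same $\pi$-multiset are isometric. A direct computation shows that for distinct $x,y\in M$, $\|x-y\|_p$ depends only on $(\pi(x),\pi(y))$: it equals $\tfrac{2^{1/p}}{k}$ for a twin pair $\pi(x)=\pi(y)=k$, $\bigl(\tfrac{1}{\pi(x)^p}+\tfrac{1}{\pi(y)^p}\bigr)^{1/p}$ when both are nonzero with distinct $\pi$, and $\tfrac{1}{\pi(y)}$ when $x=0$. Writing $T^*:=\pi(C)$, the condition $|M|=\infty$ forces $C$ to avoid $0$ and $T^*$ to greedily fill each $k=1,2,\dots$ with multiplicity $\mu(k):=2$ if $k\in N$ and $1$ otherwise until $n$ points are chosen. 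Setting $K:=\max T^*$ and letting $t\in\{1,\mu(K)\}$ denote its multiplicity, the elementary inequalities $\tfrac{2^{1/p}}{k_2}<\bigl(\tfrac{1}{k_1^p}+\tfrac{1}{k_2^p}\bigr)^{1/p}<\tfrac{2^{1/p}}{k_1}$ for $k_1<k_2$ yield
\[
\delta(C)=\begin{cases}\tfrac{2^{1/p}}{K},&t=2,\\[2pt]\bigl(\tfrac{1}{(K-1)^p}+\tfrac{1}{K^p}\bigr)^{1/p},&t=1.\end{cases}
\]

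For any realizable multiset $T'$ of size $n$ with $T'\neq T^*$, a short count reveals three mutually exclusive cases: (a) $0\in T'$; (b) $\max T'>K$; or (c) $\max T'=K$ with multiplicity $2$ while $t=1$. In (a), writing $\tilde K$ for the largest positive element of $T'$, an accounting argument forces $\tilde K\geq K$ when $t=2$ and $\tilde K\geq K-1$ when $t=1$, so $\delta(T')\leq 1/\tilde K<\delta(C)$ by the formulas above (using $2^{1/p}>1$ and $(K-1)^{-p}+K^{-p}>(K-1)^{-p}$). In (b), sorting $T'=(a_1'\leq\cdots\leq a_n')$ gives $a_n'\geq K+1$ and $a_{n-1}'\geq a_{n-1}^*\in\{K-1,K\}$; splitting into the twin sub-case $a_{n-1}'=a_n'$ and the cross sub-case $a_{n-1}'<a_n'$ shows the pair distance $d(a_{n-1}',a_n')<\delta(C)$ in each of the four resulting configurations via a direct comparison of sums of reciprocal $p$-th powers. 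In (c), $T'$ has a twin pair at $K$, so $\delta(T')\leq\tfrac{2^{1/p}}{K}<\bigl(\tfrac{1}{(K-1)^p}+\tfrac{1}{K^p}\bigr)^{1/p}=\delta(C)$. Combined, $\delta(T')<\delta(C)$ in all cases, which proves $C$ is both optimal and unique up to isometry. The hard part will be the bookkeeping in (b), where the four sub-cases each reduce to routine but separate inequalities.
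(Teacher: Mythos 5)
Your proof is correct in outline, but it takes a considerably more explicit and case-heavy route than the paper's. The paper's argument is a short exchange argument: if $C$ is not of the claimed form, take the element $x\in C$ of minimum norm, note there is some $y\in M\setminus C$ with $\|y\|_p>\|x\|_p$, and show directly that $C':=(C\setminus\{x\})\cup\{y\}$ has strictly larger minimum distance. The only fact used is that distinct points of $M$ have disjoint support, so $\|u-v\|_p^p=\|u\|_p^p+\|v\|_p^p$; the two smallest norms in any code thus determine its minimum distance, and replacing the smallest norm with a strictly larger one strictly increases it. No explicit characterization of the isometry group and no closed-form expression for $\delta(C)$ are needed. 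By contrast, you pin down the isometry group as $\{g_S:S\subseteq N\}$, establish that $\delta$ depends only on the multiset $\pi(C')$, and then compare multisets through cases (a)--(c). Your case analysis is exhaustive (though the three cases are not actually mutually exclusive as stated --- (a) can overlap with (b) or (c); what matters is that the argument in (a) handles every $T'$ containing $0$, which it does), and each comparison works. One small side benefit of your approach is that it produces the explicit isometry group and the explicit value of $\delta(C)$ as a function of $K$ and $t$, which the paper never needs. The tradeoff is that the bookkeeping in (b), and the ``a second count reveals\dots'' claims in (a), require the majorization-type fact that $T^*$ is componentwise minimal among realizable multisets avoiding $0$, which deserves to be stated and proved; the paper's exchange argument bypasses all of that. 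One pedantic note: your characterization of $0$ by $\sup_{z\in M}\|x-z\|_p\leq 1$ is right, but the witness ``some $z$ with $\pi(z)=1$ on a different basis coordinate'' fails for $x$ with $\pi(x)=1$ when $1\notin N$; in that case use a $z$ with $\pi(z)=2$ instead, which still gives distance $>1$.
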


\begin{proof}
First, the codes of the described form are unique up to isometry, and so it suffices to show that all other codes are suboptimal.
Suppose $C$ is not of the described form and select $x\in C$ of minimum norm.
Then there exists $y\in M\setminus C$ such that $\|y\|_p>\|x\|_p$.
We claim that $C':=(C\setminus\{x\})\cup \{y\}$ has strictly larger minimum distance.
Select $z\in C\setminus\{x\}$ of minimum norm, and consider any $u,v\in C'$ with $u\neq v$.
Since $u$ and $v$ have disjoint support (and similarly for $x$ and $z$), it holds that
\[
\|u-v\|_p^p
=\|u\|_p^p+\|v\|_p^p
>\|z\|_p^p+\|x\|_p^p
=\|z-x\|_p^p.
\]
It follows that
\[
\delta(C)
=\min_{u\in C\setminus \{x\}}\|u-x\|_p
<\min\Big(\delta(C\setminus\{x\}),\min_{u\in C\setminus \{x\}}\|u-y\|_p\Big)
=\delta(C').
\qedhere
\]
\end{proof}

Overall, $M$ is a unicorn space.
To evaluate Conjectures~\ref{conj.1} and~\ref{conj.2}, there are two cases to consider.
If $N$ is finite, then all optimal codes of size $n>2\cdot\max N$ are invariant under the isometry $g_N$, and so Conjecture~\ref{conj.1} holds.
Since our unicorn sequence consists of codes of all sizes, Conjecture~\ref{conj.2} trivially holds in this case.
On the other hand, if $N$ is infinite, then the symmetry strength of $M$ is infinite.
Indeed, given any finite $C\subseteq M$, select any $m\in N$ with $m>\max\{\pi(x):x\in C\}$.
Then $C$ is invariant under the isometry $g_{\{m\}}$.
As such, Conjecture~\ref{conj.1} trivially holds and Conjecture~\ref{conj.2} vacuously holds in this case.

\subsection{The Hilbert cube}

Consider the Hilbert cube $H$ of sequences $x$ such that $x_k\in[0,\frac{1}{k}]$ for every $k\in\mathbb{N}$.
Notice that $H\subseteq\ell^2$ inherits a metric from the $2$-norm.
In fact, $H$ is compact as a consequence of Tychonoff's theorem, and so we may seek optimal codes in $H$.
For every subset $N\subseteq\mathbb{N}$, the map $g_N\colon H\to H$ defined by
\[
(g_N(x))_k
:=\left\{\begin{array}{cl}
\frac{1}{k}-x_k&\text{if }k\in N\\
x_k&\text{otherwise}
\end{array}\right.
\]
is an isometry of $H$.
We do not know whether $H$ is a unicorn space, but in what follows, we identify small optimal codes in $H$, and in the spirit of Conjectures~\ref{conj.1} and~\ref{conj.2}, each of these are invariant under a nontrivial isometry.

For codes of size $2$, we see that every $x,y\in H$ satisfies
\[
\|x-y\|_2^2
=\sum_{k=1}^\infty (x_k-y_k)^2
\leq \sum_{k=1}^\infty \frac{1}{k^2}
=\frac{\pi^2}{6},
\]
with equality precisely when $|x_k-y_k|=\frac{1}{k}$ for every $k\in\mathbb{N}$.
Up to isometry, this occurs precisely when $x=0$ and $y=\sum_{k=1}^\infty\frac{1}{k}e_k$.
Notice that the uniquely optimal code $\{x,y\}$ is invariant under the isometry $g_\mathbb{N}$.
Codes of size $3$ are less trivial to analyze.

\begin{lemma}
\label{eq.size3hilbert}
Consider $x,y,z\in H$ defined by
\[
x=0,
\qquad
y=\frac{1}{2}e_1+\sum_{k=2}^\infty \frac{1}{k}e_k,
\qquad
z=e_1.
\]
Then $\{x,y,z\}$ is an optimal code in the Hilbert cube and is unique up to isometry.
\end{lemma}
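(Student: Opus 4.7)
The plan is to prove both optimality and uniqueness via a pigeonhole argument on the first coordinate of $H$. The choice of coordinate is essentially forced: the analogous trick in coordinate $k$ yields only the weaker bound $\pi^2/6 - 3/(4k^2)$, so coordinate $1$ (where $H$ has largest width) is the one that produces the stated constant $\pi^2/6 - 3/4$.

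For the upper bound, I would take an arbitrary $3$-point code $C = \{p_1, p_2, p_3\} \subseteq H$ and relabel so that $p_{1,1} \leq p_{2,1} \leq p_{3,1}$. Since the three values lie in $[0,1]$, the consecutive gaps $(p_{2,1} - p_{1,1})$ and $(p_{3,1} - p_{2,1})$ sum to at most $1$, so at least one of them is $\leq 1/2$; without loss of generality, $p_{2,1} - p_{1,1} \leq 1/2$. Combined with $|p_{1,k} - p_{2,k}| \leq 1/k$ for every $k \geq 2$, this gives
\[
\|p_1 - p_2\|^2 = (p_{2,1} - p_{1,1})^2 + \sum_{k \geq 2}(p_{1,k} - p_{2,k})^2 \leq \tfrac{1}{4} + \sum_{k \geq 2}\tfrac{1}{k^2} = \tfrac{\pi^2}{6} - \tfrac{3}{4},
\]
so $\delta(C)^2 \leq \pi^2/6 - 3/4$. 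A direct computation shows that $\{x,y,z\}$ attains this bound ($\|x-y\|^2 = \|y-z\|^2 = \pi^2/6 - 3/4$ and $\|x-z\|^2 = 1$), confirming optimality.

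For uniqueness, I would take any optimal $C$, labelled as above, and trace the equalities. Tightness of the pigeonhole step forces both coord-$1$ gaps to equal $1/2$, so $(p_{1,1}, p_{2,1}, p_{3,1}) = (0, 1/2, 1)$. Tightness of the coordinate sum, applied first to the pair $(p_1, p_2)$ and then to $(p_2, p_3)$ (which also has coord-$1$ gap $1/2$), then forces $\{p_{1,k}, p_{2,k}\} = \{p_{2,k}, p_{3,k}\} = \{0, 1/k\}$ for every $k \geq 2$; intersecting these shows that $p_{1,k} = p_{3,k}$ is the element of $\{0, 1/k\}$ opposite to $p_{2,k}$. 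Finally, the reflection isometry $g_N$ with $N := \{k \geq 2 : p_{2,k} = 0\}$ maps $C$ onto $\{x,y,z\}$. The only real obstacle is spotting the correct pigeonhole at the outset; once coordinate $1$ is singled out, everything else is routine bookkeeping with equality cases.
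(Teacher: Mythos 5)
Your proof is correct, and the pigeonhole phrasing makes it cleaner than the paper's. The paper takes a longer route: it first proves as a preliminary step that an optimal size-$3$ code cannot have all three first coordinates strictly inside $(0,1)$, uses this together with the isometry $g_{\{1\}}$ to reduce to the case $0 = x_1 \leq y_1 \leq z_1 \leq 1$, and then bounds
\[
\min\bigl(\|y-x\|_2^2,\ \|y-z\|_2^2\bigr)
\leq \min\bigl(y_1^2,(y_1-z_1)^2\bigr) + \tfrac{\pi^2}{6}-1
\leq \tfrac{z_1^2}{4} + \tfrac{\pi^2}{6}-1,
\]
finally combining with the $\|x-z\|_2^2 = z_1^2 + \|x_\perp - z_\perp\|_2^2$ bound and a monotonicity-in-$z_1$ argument to push $z_1$ to $1$. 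Your observation that at least one of the two consecutive first-coordinate gaps is at most $1/2$ recovers the same $\tfrac14 + (\tfrac{\pi^2}{6}-1)$ bound directly for that pair, so neither the preliminary reduction nor the three-term combination is needed. The equality tracking then lands in the same place as the paper's (first coordinates $(0,\tfrac12,1)$, all other coordinates extreme, followed by a sign-flip isometry $g_N$). One small point to phrase more carefully in a write-up: the pigeonhole by itself only gives that \emph{one} gap is $\leq \tfrac12$; that the second gap also equals $\tfrac12$ is a two-step deduction (saturation forces the first gap to be exactly $\tfrac12$, which leaves at most $\tfrac12$ for the second gap, whereupon the same saturation argument applies to the pair $(p_2,p_3)$). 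You clearly use this chain, but the sentence ``tightness of the pigeonhole step forces both coord-$1$ gaps to equal $1/2$'' compresses it in a way that could read as circular.
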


\begin{proof}
First, we show that if $x_1,y_1,z_1\in(0,1)$, then $\{x,y,z\}$ is not optimal.
Without loss of generality, we have $0<x_1\leq y_1\leq z_1<1$.
We may modify $x$ and $z$ in the first coordinate to obtain $x'$ and $z'$ with $x'_1=0$ and $z'_1=1$.
Then $\delta(\{x',y,z'\})>\delta(\{x,y,z\})$, and so $\{x,y,z\}$ is not optimal, as claimed.

Up to isometry, we may therefore assume $0=x_1\leq y_1\leq z_1\leq 1$.
Write 
\[
x=x_\perp,
\qquad
y=y_1e_1+y_\perp,
\qquad
z=z_1e_1+z_\perp.
\]
Then we have
\begin{align}
\min\Big(\|y-x\|_2^2,\|y-z\|_2^2\Big)
\label{eq.smaller_of_smalls}
&=\min\Big(y_1^2+\|y_\perp-x_\perp\|_2^2,(y_1-z_1)^2+\|y_\perp-z_\perp\|_2^2\Big)\\
\label{eq.max_norms}
&\leq\min(y_1^2,(y_1-z_1)^2)+\tfrac{\pi^2}{6}-1\\
\label{eq.y1_in_between}
&\leq\tfrac{z_1^2}{4}+\tfrac{\pi^2}{6}-1.
\end{align}
We combine this with the fact that $t\mapsto\tfrac{t^2}{4}+\tfrac{\pi^2}{6}-1$ and $t\mapsto t^2+\|x_\perp-z_\perp\|_2^2$ are both increasing functions over $t\in[0,1]$ to get
\begin{align}
\delta(\{x,y,z\})^2
\nonumber
&\leq\min\Big(\tfrac{z_1^2}{4}+\tfrac{\pi^2}{6}-1,z_1^2+\|x_\perp-z_\perp\|_2^2\Big)\\
\label{eq.z1to1}
&\leq\min\Big(\tfrac{\pi^2}{6}-\tfrac{3}{4},1+\|x_\perp-z_\perp\|_2^2\Big)\\
\nonumber
&=\tfrac{\pi^2}{6}-\tfrac{3}{4}.
\end{align}
(The last equality uses the fact that $\tfrac{\pi^2}{6}-\tfrac{3}{4}<1$.)
The inequality \eqref{eq.z1to1} is saturated precisely when $z_1=1$, in which case $\delta(\{x,y,z\})^2$ equals \eqref{eq.smaller_of_smalls}.
Next, \eqref{eq.y1_in_between} is saturated precisely when $y_1=z_1/2$.
Finally, when $(y_1,z_1)=(1/2,1)$, the inequality \eqref{eq.max_norms} is saturated precisely when
\[
\|y_\perp-x_\perp\|_2^2
=\|y_\perp-z_\perp\|_2^2
=\tfrac{\pi^2}{6}-1,
\]
which determines $x_\perp$, $y_\perp$ and $z_\perp$ up to isometry.
\end{proof}

The code in Lemma~\ref{eq.size3hilbert} forms the vertices of an obtuse isosceles triangle; one of the squared edge lengths is $1$ and the other two are $\tfrac{\pi^2}{6}-\tfrac{3}{4}$.
It is invariant under the isometry $g_{\{1\}}$.
While we did not determine the optimal codes of size $4$, we ran enough numerical experiments on truncations of the Hilbert cube to formulate the following conjecture:

\begin{conjecture}
\label{conj.hilbert4code}
Put $\alpha:=\frac{\sqrt{2}\pi}{3}$, select $N\subseteq\mathbb{N}$ such that $\sum_{k\in N}\frac{1}{k^2}=\alpha-1$, and define
\[
s:=\sum_{k=2}^\infty\frac{1}{k}e_k,
\qquad
t:=\sum_{k\in N}\frac{1}{k}e_k.
\]
Then $\{0,(1-\frac{1}{2}\alpha)e_1+s,e_1+t,\frac{1}{2}\alpha e_1+s-t\}$ is an optimal code in the Hilbert cube.
\end{conjecture}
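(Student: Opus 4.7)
The conjecture has two parts: (a) the proposed quadruple lies in $H$ and has minimum squared distance $2\pi^2/9 - \alpha$, and (b) no 4-point code in $H$ has larger minimum distance.

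For (a), using $\alpha^2 = 2\pi^2/9$, $\|s\|^2 = \pi^2/6 - 1$, and (assuming $1 \notin N$, so $t \perp e_1$) $\|t\|^2 = \langle s, t \rangle = \alpha - 1$, I would expand each of the six pairwise squared distances. Five should equal $2\pi^2/9 - \alpha$ and the sixth (for the pair $\{0, e_1 + t\}$) should equal $\alpha$. Since $\alpha > \pi^2/9 > 2\pi^2/9 - \alpha$ numerically, the minimum is $\sqrt{2\pi^2/9 - \alpha}$, and the ``bottleneck graph'' (pairs at the minimum) is $K_4$ minus one edge. The existence of the required subset $N$ is ensured by the fact that the sequence $\{1/k^2\}_{k \geq 2}$ satisfies the subset-sum density criterion ($1/k^2 \leq \sum_{j > k} 1/j^2$ for $k \geq 2$), so every target in $[0, \pi^2/6 - 1]$ is attained.

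For (b), given any 4-code $\{q_1, \dots, q_4\} \subseteq H$, decompose $q_i = a_i e_1 + r_i$ with $r_i$ in the tail cube $H_\perp$ of squared diameter $D := \pi^2/6 - 1$. Since $2\pi^2/9 - \alpha > D$ numerically, an optimal code has distinct first coordinates, and by the monotonicity argument from the opening of the proof of Lemma~\ref{eq.size3hilbert} we can normalize $0 = a_1 < a_2 \leq a_3 < a_4 = 1$. The key claim is that at a maximizer the bottleneck graph is $K_4$ minus the edge $\{1,4\}$, the interior first coordinates satisfy $a_2 + a_3 = 1$, and the perpendicular diameters $b_{12} := \|r_1 - r_2\|^2$ and $b_{34} := \|r_3 - r_4\|^2$ both equal $D$. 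Granting this, the perpendicular structure is parameterized by the subset $A \subseteq \{2, 3, \ldots\}$ of coordinates where $r_3$ agrees with $r_1$; writing $x := a_2$ and $U := \sum_{k \in A}1/k^2$, the five equal distances become the system
\[
x^2 + D \;=\; (1-x)^2 + (D - U) \;=\; (1-2x)^2 + U \;=\; \delta^2,
\]
whose unique solution in the feasible range is $x = 1 - \alpha/2$, $U = \alpha - 1$, $\delta^2 = 2\pi^2/9 - \alpha$, recovering the candidate code.

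\textbf{Main obstacle.} Justifying the three structural assumptions above is the crux. The averaging bound $6\delta^2 \leq 4(V_1 + V_\perp) \leq 4 \cdot \pi^2/6$ gives only $\delta^2 \leq \pi^2/9$, which is weaker than the target. The gap reflects the obstruction that a regular inscribed tetrahedron with edge $\pi/3$ would require partitioning the coordinate weights $\{1/k^2\}_{k \geq 1}$ into three classes of total weight $\pi^2/18$ each -- impossible because $k = 1$ alone contributes $1 > \pi^2/18$. Turning this qualitative obstruction into the quantitative bound requires a Lagrangian/KKT analysis at a maximizer: the active distance constraints, together with the infinite-dimensional realizability constraint that $(b_{ij})$ arises from actual points $r_i \in H_\perp$, must force the bottleneck graph, the $a_2 + a_3 = 1$ symmetry (by a $g_{\{1\}}$-averaging argument), and the saturation of $b_{12}$ and $b_{34}$. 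The last of these -- showing that no perpendicular budget can be ``wasted'' at the optimum -- is where I expect the main technical difficulty to lie, and it may require a smoothing/perturbation argument rather than a direct KKT computation.
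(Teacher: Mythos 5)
The paper does not prove this statement---it is presented as Conjecture~\ref{conj.hilbert4code}, formulated from numerical experiments on truncations of the Hilbert cube, and the text explicitly notes that the authors ``did not determine the optimal codes of size $4$.'' So there is no proof in the paper against which to compare yours; your honest acknowledgment in the final paragraph that the optimality direction is not closed is exactly where the authors are as well.

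Your verification of part (a) is correct, and in fact corrects a small slip in the paper. You compute $\|0-(e_1+t)\|_2^2 = 1 + \|t\|_2^2 = 1 + (\alpha-1) = \alpha$, while the other five squared distances are $\alpha^2-\alpha = \tfrac{2\pi^2}{9}-\alpha$; the paper's prose asserts the long squared edge is $\alpha^2$, which is impossible since $\alpha^2 = \tfrac{2\pi^2}{9} > \tfrac{\pi^2}{6}$, the squared diameter of $H$. Your existence argument for the required $N$ via the greedy/subset-sum density criterion is the same idea the paper develops in Lemma~\ref{lem.salzer}, and your observation that a regular tetrahedron of squared edge $\pi^2/9$ is infeasible because the single coordinate $k=1$ already carries weight $1 > \pi^2/18$ is a nice qualitative account of why the optimum is a disphenoid rather than a simplex.

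The genuine gap, as you correctly diagnose, is upgrading that qualitative obstruction to the three structural claims (bottleneck graph $K_4$ minus one edge, $a_2+a_3=1$, saturation of the two perpendicular diameters). Granting them, your reduction to the system $x^2+D = (1-x)^2+(D-U) = (1-2x)^2+U$ does recover $x=1-\alpha/2$, $U=\alpha-1$, $\delta^2 = 2\pi^2/9-\alpha$. But those structural claims are not free: the first-coordinate normalization from Lemma~\ref{eq.size3hilbert} carries over, yet nothing in the size-$3$ argument forces the perpendicular diameters $b_{12}$ and $b_{34}$ to saturate simultaneously, and the infinite-dimensional realizability constraint you flag (that the Gram-type data $(b_{ij})$ must come from actual points in the tail cube, whose coordinate weights are $\{1/k^2\}_{k\geq 2}$ and thus not freely splittable) is precisely what makes a clean KKT or averaging argument elusive. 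Until that step is supplied, part (b) remains open, consistent with the paper's choice to state the result as a conjecture.
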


This code forms the vertices of a digonal disphenoid; one of the squared edge lengths is $\alpha^2$ and the other five are $\alpha^2-\alpha$.
It is invariant under the isometry $g_{\{1\}\cup N}$.
One may use a greedy approach to construct $N$.
To see this, we first draw inspiration from~\cite{Salzer:47} to prove the following lemma:

\begin{lemma}
\label{lem.salzer}
Given any $x\in(0,\frac{1}{9})$, initialize $x_0:=0$, and for each $k\in\mathbb{N}$, iteratively take $a_k$ to be the smallest integer such that $x_{k-1}+\tfrac{1}{a_k^2}< x$ and then put $x_k:=x_{k-1}+\tfrac{1}{a_k^2}$.
Then $a_{k+1}>a_k$ for every $k\in\mathbb{N}$ and $\sum_{k=1}^\infty\frac{1}{a_k^2}=x$.
\end{lemma}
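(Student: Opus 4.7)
The plan is to track the ``remaining mass'' $y_k := x - x_k$ and to show that the greedy rule produces a strictly increasing sequence $(a_k)$ tending to infinity, which forces $y_k \to 0$ and hence $\sum_k 1/a_k^2 = x$. First, for $a_k$ to be well-defined at step $k$ I need $y_{k-1} > 0$; this follows by induction from the defining inequality, since $1/a_k^2 < y_{k-1}$ gives $y_k > 0$. For the base case, the hypothesis $x < 1/9$ means $1/3^2 \not< x$, so no integer $a \leq 3$ can serve as $a_1$, forcing $a_1 \geq 4$.

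For the inductive step, the plan is to exploit the minimality of $a_k$: whenever $a_k \geq 2$, the integer $a_k - 1$ must fail the defining condition, so $1/(a_k - 1)^2 \geq y_{k-1}$. Subtracting $1/a_k^2$ telescopes to the key bound
\[
y_k \;\leq\; \frac{1}{(a_k-1)^2} - \frac{1}{a_k^2} \;=\; \frac{2a_k - 1}{a_k^2(a_k-1)^2}.
\]
Taking reciprocal square roots and combining with $a_{k+1} > 1/\sqrt{y_k}$ (which is immediate from $1/a_{k+1}^2 < y_k$) yields
\[
a_{k+1} \;>\; \frac{a_k(a_k-1)}{\sqrt{2a_k - 1}}.
\]
A one-line algebra check shows that this lower bound is at least $a_k$ exactly when $(a_k-1)^2 \geq 2a_k - 1$, i.e.\ $a_k^2 - 4a_k + 2 \geq 0$. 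This inequality is strict for every integer $a_k \geq 4$ (with the value $2$ at $a_k = 4$), so $a_{k+1} > a_k$ as real numbers, and since $a_{k+1}$ is an integer, $a_{k+1} \geq a_k + 1 \geq 5$. The induction therefore propagates, and in fact yields $a_k \geq k + 3$ for every $k$.

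Finally, since $a_k \to \infty$, the telescoping bound above forces $y_k \to 0$, and so $\sum_{k=1}^{\infty} 1/a_k^2 = \lim_k x_k = x - \lim_k y_k = x$. The only delicate point is the role of the hypothesis $x < 1/9$: it is precisely the threshold that launches the induction at $a_1 \geq 4$, which is in turn the smallest value of $a_k$ for which the quadratic inequality $a_k^2 - 4a_k + 2 \geq 0$ holds. Had one started with $a_1 = 3$, the greedy step could stall (since the corresponding lower bound would be smaller than $a_k$), so this is the main structural obstacle the hypothesis is designed to remove.
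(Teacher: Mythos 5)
Your proof is correct and follows essentially the same route as the paper's: establish $a_1 \ge 4$ from $x < 1/9$, use minimality of $a_k$ to sandwich the remainder between $1/a_k^2$ and $1/(a_k-1)^2$, telescope to bound the next remainder by $1/(a_k-1)^2 - 1/a_k^2$, and deduce $a_{k+1} > a_k$ (hence $a_k \ge k+3$) via the same quadratic condition $a_k^2 - 4a_k + 2 \ge 0$. The only differences are notational (introducing $y_k := x - x_k$) and that you spell out the algebra behind the threshold $a_k \ge 4$ slightly more explicitly.
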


\begin{proof}
Since $x<\frac{1}{9}$, it holds that $a_1\geq4$.
For each $k\in\mathbb{N}$, our choice of $a_k$ ensures that
\begin{equation}
\label{eq.key estimate for iteration}
\tfrac{1}{a_k^2}
< x-x_{k-1}
\leq\tfrac{1}{(a_k-1)^2}.
\end{equation}
This in turn implies
\[
\tfrac{1}{a_{k+1}^2}
< x-x_k
=x-x_{k-1}-\tfrac{1}{a_k^2}
\leq\tfrac{1}{(a_k-1)^2}-\tfrac{1}{a_k^2}.
\]
Rearranging then gives
\[
a_{k+1}>(\tfrac{1}{(a_k-1)^2}-\tfrac{1}{a_k^2})^{-1/2},
\]
and the right-hand side is at least $a_k$ whenever $a_k\geq 4$.
Overall, we inductively have both $a_{k+1}>a_k$ and $a_k\geq k+3$ for every $k\in\mathbb{N}$, and so \eqref{eq.key estimate for iteration} implies the desired convergence.
\end{proof}

In fact, we can use Lemma~\ref{lem.salzer} to construct \textit{multiple} choices of $N\subseteq\mathbb{N}$ with the property that $\sum_{k\in N}\frac{1}{k^2}=\alpha-1$.
For example, we first take $x:=(\alpha-1)-(\tfrac{1}{2^2}+\tfrac{1}{3^2}+\tfrac{1}{4^2})\in(0,\frac{1}{9})$ and run the iteration in Lemma~\ref{lem.salzer} to get $a_1=5>4$, etc.
This means we can select
\[
N=\{2,3,4,a_1,a_2,\ldots\}.
\]
Alternatively, if we take $x':=(\alpha-1)-(\tfrac{1}{2^2}+\tfrac{1}{3^2}+\tfrac{1}{4^2}+\tfrac{1}{6^2}+\tfrac{1}{7^2})\in(0,\frac{1}{9})$, then the iteration in Lemma~\ref{lem.salzer} gives $a'_1=11>7$, etc.
As such, we can also select
\[
N'=\{2,3,4,6,7,a'_1,a'_2,\ldots\}.
\]
Overall, the codes described in Conjecture~\ref{conj.hilbert4code} exist, but are not unique.

\section{Discussion}

In this paper, we made explicit predictions concerning emergent symmetries in optimal codes.
Many open problems remain, and we collect them in this section.

In the context of the equilateral triangle, our conjectures correspond to conjectures of Erd\H{o}s and Oler~\cite{Oler:61} and Lubachevsky, Graham and Stillinger \cite{LubachevskyGS:97}.

For tori, we conjecture that each of the following are unicorn spaces:
\[
\mathbb{R}^4/D_4,
\qquad
\mathbb{R}^8/E_8,
\qquad
\mathbb{R}^{24}/\Lambda_{24}.
\]
In fact, we believe that each of these spaces admits a unicorn sequence of lattice codes, but a proof of uniqueness is required.
For $\mathbb{R}^2/A_2$, Conjecture~\ref{conj.2} is related to conjectures in \cite{DickensonGKX:11,ConnellyD:14}, of which the smallest open case is the following:
The optimal codes of size $8$ are obtained by removing any point from any lattice code of size $9$.
Presumably, this case can be resolved using techniques from~\cite{MusinN:16}.
We believe similar phenomena hold for $\mathbb{R}^m/L$ with $L\in\{D_4,E_8,\Lambda_{24}\}$.

Metric graphs provide a rich family of compact metric spaces to test our conjectures, and their low dimensionality makes this a feasible endeavor.
Our linear programming bounds provide a systematic approach to hunt for counterexamples to our conjectures.
Presumably, these methods can be used to prove our conjectures in the setting of metric graphs, but it is unclear how to do so.

Is it the case that every unicorn ultrametric space has infinite symmetry strength?
What are the optimal codes of size at least $4$ in the Hilbert cube?
Is the Hilbert cube a unicorn space?
More broadly, what other unicorn spaces are there?

Finally, while we drew inspiration from the orthoplex spherical code, the formalism of our predictions requires sequences of codes that appear to preclude important metric spaces such as the sphere.
We are also interested in real and complex projective spaces, as well as other Grassmannian spaces.
How might one formulate analogous predictions in these settings?

\section*{Acknowledgments}

Much of this work was conducted during the SOFT 2020:\ Summer of Frame Theory virtual workshop. 
CC was partially supported NSF DMS 1555149 and NSF DMS 1839918.
DGM was partially supported by AFOSR FA9550-18-1-0107 and NSF DMS 1829955.
HP was partially supported by an AMS-Simons Travel Grant. The authors would also like to thank the reviewer for their feedback, in particular a suggestion to simplify Case II in the proof of Theorem~\ref{thm.unique codes in unit-distance metric graphs}(b).

\end{document}